\documentclass[12pt, a4paper, reqno]{amsart}
\usepackage{eurosym}
\usepackage{amsmath}
\usepackage{amsfonts}
\usepackage{amssymb}
\usepackage{color}
\usepackage{comment,graphicx}
\usepackage[T1]{fontenc}
\usepackage{url}
\usepackage{ae}
\usepackage{mathtools}
\usepackage{esint}
\usepackage{latexsym}
\usepackage{amscd}
\usepackage{amsmath}
\usepackage{amsfonts}
\usepackage{amssymb}
\usepackage{color}
\usepackage{comment,graphicx}
\usepackage[T1]{fontenc}
\usepackage{url}
\usepackage{ae}

\def\phi{\varphi}
\def\rho{\varrho}
\def\epsilon{\varepsilon}
\numberwithin{equation}{section}
\theoremstyle{plain}
\newtheorem{theorem}[equation]{Theorem}
\newtheorem{lemma}[equation]{Lemma}

\newtheorem{corollary}[equation]{Corollary}
\theoremstyle{definition}
\newtheorem{definition}[equation]{Definition}

\theoremstyle{remark}
\newtheorem{remark}[equation]{Remark}

\renewcommand{\le}{\leqslant}
\renewcommand{\ge}{\geqslant}
\renewcommand{\leq}{\leqslant}
\renewcommand{\geq}{\geqslant}

\newcommand{\vertiii}[1]{{\left\vert\kern-0.25ex\left\vert\kern-0.25ex\left\vert \right\vert\kern-0.25ex\right\vert\kern-0.25ex\right\vert}}

\begin{document}
\title[Mixed-norm Herz-type Triebel-Lizorkin spaces]{Mixed-norm
Herz-type Triebel-Lizorkin spaces}
\author[D. Drihem]{Douadi Drihem}
\address{Douadi Drihem\\
Laboratory of Functional Analysis and Geometry of Spaces, Faculty of
Mathematics and Informatics, Department of Mathematics,\\
M'sila University, PO Box 166 Ichebelia, M'sila 28000, Algeria}
\email{douadidr@yahoo.fr, douadi.drihem@univ-msila.dz}
\thanks{ }
\date{\today }
\subjclass[2010]{ 46E35.}

\begin{abstract}
	Based on mixed-norm Herz spaces, we introduce a new class of mixed-norm Herz-type Triebel-Lizorkin spaces. We establish their $\varphi$-transform characterization in the sense of Frazier and Jawerth and prove Sobolev embedding theorems for these spaces. The resulting embedding theorems extend and improve several known results for mixed-norm Triebel-Lizorkin spaces.

\end{abstract}

\keywords{Mixed-norm Herz, Triebel-Lizorkin space, Embedding.}
\maketitle
\section{Introduction}
Function spaces play a fundamental role in modern mathematical analysis, particularly in the study of partial differential equations, harmonic analysis, and the regularity theory of solutions. Among the most important scales are the classical Sobolev, Bessel potential, Besov, and Triebel-Lizorkin spaces. These spaces provide a unified framework for describing smoothness and integrability properties of functions and distributions. In recent decades, considerable attention has been devoted to extending these classical scales by replacing the underlying Lebesgue spaces with more general function spaces that better capture anisotropic behavior, local regularity, and different types of integrability.

One important generalization is provided by Herz spaces, which were introduced to describe the behavior of functions through weighted integrability over dyadic annuli centered at the origin. In contrast to the classical Lebesgue spaces, Herz spaces allow one to measure local integrability and global decay independently. This property makes Herz spaces particularly well suited to the study of problems in which the behavior of functions near the origin and at infinity plays a fundamental role. Herz-type Sobolev and Bessel potential spaces were investigated by Lu and Yang \cite{LuYang97}, who established their basic properties and explored their applications to partial differential equations. Subsequently, Herz-type  Triebel-Lizorkin spaces emerged as natural extensions of the classical smoothness scales, incorporating Herz norms to capture both local regularity and spatial decay; see, for instance, \cite{XuYang05}.

The theory of Herz-type Triebel-Lizorkin spaces has attracted particular interest because of its applications to nonlinear partial differential equations and harmonic analysis. In particular, the Cauchy problem for semilinear parabolic equations has been investigated in Herz-type Triebel-Lizorkin spaces under suitable assumptions on the nonlinear terms, illustrating the importance of these spaces in the analysis of nonlinear phenomena; see \cite{Dr22.Banach}.

A further generalization arises from the theory of mixed-norm function spaces, in which different integrability exponents are assigned to different coordinate directions. Mixed-norm spaces are especially useful in problems exhibiting anisotropic behavior, where the regularity or integrability of a function may vary from one direction to another. Recently, Zhang, Yang, and Zhang \cite{ZYZ22} introduced and investigated a class of mixed-norm Herz spaces, providing a natural framework that combines the annular structure of Herz spaces with the directional flexibility of mixed-norm Lebesgue spaces. They established several fundamental properties of these spaces, including duality results, a Riesz-Thorin interpolation theorem, the boundedness of the Hardy-Littlewood maximal operator, and the Fefferman-Stein vector-valued maximal inequality.

These developments suggest that mixed-norm Herz spaces provide a suitable underlying framework for constructing generalized smoothness spaces that incorporate both anisotropic integrability and global decay. In particular, the study of Triebel-Lizorkin-type spaces associated with mixed-norm Herz spaces is a natural continuation of the existing theory. Such spaces are expected to combine the advantages of mixed-norm Lebesgue spaces, Herz spaces, and classical Triebel-Lizorkin spaces, thereby providing a flexible setting for investigating embeddings, smoothness properties, and applications to partial differential equations.

The main purpose of the present work is to develop the theory of mixed-norm Herz-type  Triebel-Lizorkin spaces. These spaces are constructed by combining the Littlewood-Paley decomposition with mixed-norm Herz-type integrability. The resulting scales extend both the classical mixed-norm  Triebel-Lizorkin spaces and the corresponding Herz-type spaces. The principal objective is to establish their fundamental properties and to investigate the relationships between these spaces through characterizations and embedding theorems.

More specifically, we first recall the basic properties of mixed-norm Herz spaces and present some auxiliary estimates required for the subsequent analysis. We then obtain a 
$\phi$-transform characterization of the newly introduced spaces by means of the Calder\'{o}n reproducing formula, following the approach of Frazier and Jawerth. Furthermore, we establish elementary embedding results and Sobolev-type embeddings.

The results developed in this paper provide a systematic foundation for the study of mixed-norm Herz-type Triebel-Lizorkin spaces and clarify their connections with several established scales of function spaces. They also open the way for further investigations into applications to harmonic analysis and partial differential equations, particularly in settings where anisotropic integrability and spatial decay must be treated simultaneously.

\section{Mixed-norm  Herz spaces}

The aim of this section is to provide the necessary background on mixed-norm Herz spaces and present several technical results that will be needed in the subsequent sections of the paper, including the boundedness of the maximal operator. These results will play a crucial role in the development of the main theory. Throughout this paper, we adopt the following notation and conventions.

\subsection{Notation and conventions}
 
Throughout the paper, $\mathbb{R}^{n}$ denotes the $n$-dimensional Euclidean
space, $\mathbb{N}$ the set of positive integers, and $\mathbb{N}_{0}=%
\mathbb{N}\cup\{0\}$. The symbol $\mathbb{Z}$ stands for the set of all
integers.

For two non-negative quantities $f$ and $g$, we write $f\lesssim g$ if there
exists a positive constant $c$, independent of the relevant parameters, such
that $f\leq cg$. We write $f\approx g$ if both $f\lesssim g$ and $g\lesssim f
$ hold.

For a function $f$, $\operatorname{supp} f$ denotes its support. If
$E\subset\mathbb{R}^{n}$ is a measurable set, then $|E|$ denotes its
Lebesgue measure and $\chi_E$ its characteristic function. 

Let $E\subset\mathbb{R}^{n}$ be measurable and let $0<p\leq\infty$. We
denote by $L^{p}(E)$ the space of all measurable functions $f:E\to\mathbb{C}$
such that 
\begin{equation*}
\|f\|_{L^{p}(E)} = \left(\int_{E}|f(x)|^{p}\,dx\right)^{1/p} <\infty, \qquad
0<p<\infty, 
\end{equation*}
and 
\begin{equation*}
\big\|f\big\|_{L^{\infty }(E)}=\underset{x\in E}{\text{ess-sup}}%
|f(x)|<\infty .
\end{equation*}
When $E=\mathbb{R}^{n}$, we simply write $L^{p}$ instead of $L^{p}(\mathbb{R}%
^{n})$ and $\|f\|_{p}$ instead of $\|f\|_{L^{p}(\mathbb{R}^{n})}$.

For $\vec p=(p_{1},\ldots,p_{n})$ with $p_{i}\in(0,\infty]$, $i\in\{1,\ldots,n\}$, we write $%
0<\vec p\leq\infty$. The mixed-norm Lebesgue space $L^{\vec p}(\mathbb{R}%
^{n})$ consists of all measurable functions $f:\mathbb{R}^{n}\to\mathbb{C}$
satisfying 
\begin{equation*}
\|f\|_{L^{\vec p}(\mathbb{R}^{n})} = \Bigg(
\int_{\mathbb{R}} \Bigg(
\cdots \Bigg(
\int_{\mathbb{R}} |f(x_{1},\ldots,x_{n})|^{p_{1}} \,dx_{1} \Bigg)%
^{p_{2}/p_{1}} \cdots \Bigg)^{p_{n}/p_{n-1}} dx_{n} \Bigg)^{1/p_{n}}
<\infty, 
\end{equation*}
with the usual modification of replacing the corresponding integral by the
essential supremum whenever $p_{j}=\infty$. Equipped with this quasi-norm, $%
L^{\vec p}(\mathbb{R}^{n})$ is a quasi-Banach space and becomes a Banach
space whenever $\min\{p_{1},\ldots,p_{n}\}\geq 1$. If $p_{1}=\cdots=p_{n}=p$%
, then 
\begin{equation*}
L^{\vec p}(\mathbb{R}^{n})=L^{p}(\mathbb{R}^{n}). 
\end{equation*}
For further details on mixed-norm Lebesgue spaces, we refer to \cite{BP61}.

We denote by $\mathcal{S}(\mathbb{R}^{n})$ the Schwartz space of rapidly
decreasing smooth functions on $\mathbb{R}^{n}$. It is equipped with the
family of seminorms 
\begin{equation*}
\|\varphi\|_{\mathcal{S}_{M}} = \sup_{\substack{ \gamma\in\mathbb{N}_{0}^{n}
\\ |\gamma|\leq M}} \sup_{x\in\mathbb{R}^{n}} |\partial^{\gamma}\varphi(x)|
(1+|x|)^{n+M+|\gamma|}, \qquad M\in\mathbb{N}. 
\end{equation*}
Its dual space, denoted by $\mathcal{S}^{\prime }(\mathbb{R}^{n})$, is the
space of tempered distributions.

The Fourier transform of $f\in\mathcal{S}(\mathbb{R}^{n})$ is defined by 
\begin{equation*}
\mathcal{F}f(\xi) = (2\pi)^{-n/2} \int_{\mathbb{R}^{n}}
e^{-ix\cdot\xi}f(x)\,dx, \qquad \xi\in\mathbb{R}^{n}, 
\end{equation*}
and its inverse is denoted by $\mathcal{F}^{-1}f$. Both operators extend to $%
\mathcal{S}^{\prime }(\mathbb{R}^{n})$ in the usual way.

For $v\in\mathbb{N}_{0}$ and $m\in\mathbb{Z}^{n}$, let 
\begin{equation*}
Q_{v,m} = 2^{-v}\bigl(\lbrack 0,1)^{n}+m\bigr)
\end{equation*}
denote the dyadic cube of side length $2^{-v}$. We write $%
\chi_{v,m}=\chi_{Q_{v,m}}.$

Furthermore, for $i\in\{1,\ldots,n\}$, we define 
\begin{equation*}
Q_{v,m_i}^{\,i} = 2^{-v}\bigl(\lbrack 0,1)+m_i\bigr)
\end{equation*}
and $%
\chi_{v,m_{i}}=\chi_{Q_{v,m_{i}}^{\,i}}.$

For $0<\vec p\leq\infty$, $\vec\alpha=(\alpha_{1},\ldots,\alpha_{n})\in%
\mathbb{R}^{n}$, and $\vec\alpha_{j}=(\alpha_{1}^{j},\ldots,\alpha_{n}^{j})%
\in\mathbb{R}^{n}$, $j\in\{1,2\}$, we use the abbreviations 
\begin{equation*}
p_{-} = \min\{p_{1},\ldots,p_{n}\}, \qquad \frac{1}{\mathbf{p}} =
\sum_{i=1}^{n}\frac{1}{p_{i}}, 
\end{equation*}
and 
\begin{equation*}
\boldsymbol{\alpha}_{j}=\sum_{i=1}^{n}\alpha _{i}^{j}, \qquad \boldsymbol{\alpha}=\sum_{i=1}^{n}\alpha _{i},. 
\end{equation*}

Finally, $\mathfrak{M}(\mathbb{R}^{n})$ denotes the collection of all
measurable functions on $\mathbb{R}^{n}$. Any additional notation will be
introduced as needed.

\subsection{Definition and some basic properties}
 For $i\in \{1,\dots,n\}$ and $k_{i}\in 
\mathbb{Z}$, let 
\begin{equation*}
R_{k_{i}}=\{x_{i}\in \mathbb{R}:2^{k_{i}-1}\leq |x_{i}|<2^{k_{i}}\} \quad 
\text{and} \quad \chi_{k_{i}}=\chi_{R_{k_{i}}}.
\end{equation*}

\begin{definition}
Let $0<\vec{p},\vec{q}\leq \infty$ and $\vec{\alpha}=(\alpha_{1},\dots,%
\alpha_{n})\in \mathbb{R}^{n}$. The mixed-norm Herz space $\dot{E}_{\vec{p}%
}^{\vec{\alpha},\vec{q}}(\mathbb{R}^{n})$ is defined as the set of all
functions $f\in \mathfrak{M}(\mathbb{R}^{n})$ such that  
\begin{equation*}
\big\|f\big\|_{\dot{E}_{\vec{p}}^{\vec{\alpha},\vec{q}}(\mathbb{R}^{n})}= %
\big\|\cdot \cdot \cdot \big\|f\big\|_{\dot{K}_{p_{1}}^{\alpha
_{1},q_{1}}}\cdot \cdot \cdot \big\|_{\dot{K}_{p_{n}}^{\alpha
_{n},q_{n}}}<\infty ,
\end{equation*}
where  
\begin{equation*}
\|f\|_{\dot{K}_{p_{i}}^{\alpha_{i},q_{i}}} = \Big(\sum_{k_{i}\in \mathbb{Z}}
2^{k_{i}\alpha_{i}q_{i}} \|f\,\chi_{k_{i}}\|_{L^{{p_{i}}}(\mathbb R)}^{q_{i}}\Big)^{1/q_{i}},
\quad i\in \{1,\dots,n\}.
\end{equation*}
\end{definition}

\begin{remark}
When $p_{i}=q_{i}=p$ for all $i\in \{1,\dots,n\}$ and $\vec{\alpha}=\vec{0}=(0,\dots,%
0)$, the mixed-norm Herz spaces $\dot{E}_{\vec{p}}^{\vec{0},\vec{q}}(%
\mathbb{R}^{n})$ coincide with the Lebesgue space $L^{p}(\mathbb{R}^{n})$.

Moreover, when $p_{i}=q_{i}$ for all $i\in \{1,\dots,n\}$ and $\vec{\alpha}=%
\vec{0}$, they coincide with the mixed Lebesgue space $L^{\vec{p}}(%
\mathbb{R}^{n})$.

More details on mixed-norm Herz spaces can be found in \cite{ZYZ22}.
\end{remark}

The Hardy-Littlewood maximal operator $M$ is defined for locally integrable
functions by 
\begin{equation*}
Mf(x)=\sup_{r>0}\frac{1}{|B(x,r)|}\int_{B(x,r)}|f(y)|\,dy.
\end{equation*}

When the maximal operator is applied only in the variable $x_k$, using the
decomposition $x=(x^{\prime },x_k,x^{\prime \prime })$, we write 
\begin{equation*}
M_k f(x_1,\dots,x_n)=(Mf(x_1,\dots,x_n))(x_k).
\end{equation*}

Furthermore, for $t>0$, the iterated maximal operator $\mathcal{M}_t$ is
defined by 
\begin{equation*}
\mathcal{M}_{t}(f)(x)=\big(M_{n}\cdots M_{1}(|f|^{t})(x)\big)^{1/t}.
\end{equation*}

The proof of the main results of this paper is based on the following lemma;
see \cite{ZYZ22}.

\begin{lemma}
\label{Maximal-Inq}  Let $0<\beta \leq \infty$, $0<\vec{p},\vec{q}<\infty$,
and $\vec{\alpha}=(\alpha_{1},\dots,\alpha_{n})\in \mathbb{R}^{n}$ satisfy,
for each $i\in \{1,\dots,n\}$,  
\begin{equation*}
-\frac{1}{p_{i}}<\alpha_{i}<1-\frac{1}{p_{i}}, \quad \text{and} \quad
0<t<\min(\beta,p_{-},q_{-}).
\end{equation*}
Then there exists a constant $C>0$ such that, for any $\{f_{j}\}_{j\in 
\mathbb{Z}}\subset \dot{E}_{\vec{p}}^{\vec{\alpha},\vec{q}}(\mathbb{R}^{n})$%
,  
\begin{equation*}
\Big\|\Big(\sum_{j=-\infty}^{\infty}(\mathcal{M}_{t}(f_{j}))^{\beta}\Big)%
^{1/\beta}\Big\|_{\dot{E}_{\vec{p}}^{\vec{\alpha},\vec{q}}(\mathbb{R}^{n})}
\leq C \Big\|\Big(\sum_{j=-\infty}^{\infty}|f_{j}|^{\beta}\Big)^{1/\beta}%
\Big\|_{\dot{E}_{\vec{p}}^{\vec{\alpha},\vec{q}}(\mathbb{R}^{n})}.
\end{equation*}
\end{lemma}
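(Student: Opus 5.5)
The plan is to reduce the vector-valued inequality to a one-dimensional statement and iterate it coordinate by coordinate. Raising to the power $t$ and setting $g_j=|f_j|^t$, $r=\beta/t>1$, $\vec{P}=\vec{p}/t$, $\vec{Q}=\vec{q}/t$ and $\vec{A}=t\vec{\alpha}$, we have the identity
\begin{equation*}
\Big\|\Big(\sum_{j}|F_j|^{\beta}\Big)^{1/\beta}\Big\|_{\dot{E}_{\vec{p}}^{\vec{\alpha},\vec{q}}}^{t}
=\Big\|\Big(\sum_{j}|F_j|^{\beta t^{-1}\cdot t}\Big)^{t/\beta}\Big\|_{\dot{E}_{\vec{P}}^{\vec{A},\vec{Q}}} .
\end{equation*}
Hence it suffices to show that $\|(\sum_j (M_n\cdots M_1 g_j)^r)^{1/r}\|_{\dot{E}_{\vec P}^{\vec A,\vec Q}}\lesssim \|(\sum_j g_j^r)^{1/r}\|_{\dot{E}_{\vec P}^{\vec A,\vec Q}}$. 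Here $1<r\le\infty$ and $1<P_i,Q_i<\infty$, and the condition $-1/p_i<\alpha_i<1-1/p_i$ becomes $-1/P_i<A_i<t(1-1/p_i)$. Since $t<1$ is allowed only when $t<p_-$, we get $t(1-1/p_i)=t-t/p_i\le 1-1/P_i$, so in fact $-1/P_i<A_i<1-1/P_i$, the classical range for one-dimensional Herz spaces.

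The key one-dimensional ingredient is the following. For $1<P,Q<\infty$, $-1/P<A<1-1/P$ and $1<r\le\infty$, the Fefferman--Stein inequality
\begin{equation*}
\Big\|\Big(\sum_j (Mh_j)^r\Big)^{1/r}\Big\|_{\dot{K}^{A,Q}_{P}(\mathbb{R})}\lesssim \Big\|\Big(\sum_j |h_j|^r\Big)^{1/r}\Big\|_{\dot{K}^{A,Q}_{P}(\mathbb{R})}
\end{equation*}
holds, and more generally its $\ell^r$-valued extension with values in a Banach lattice. The proof is the standard annular decomposition $h_j=\sum_{l}h_j\chi_l$, with the pieces treated in three regimes. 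For $|l-k|\le1$ we use the vector-valued $L^P(\ell^r)$ Fefferman--Stein bound. For $l\le k-2$ we use the pointwise bound $M(h_j\chi_l)(x)\lesssim 2^{-k}\|h_j\chi_l\|_{L^1}\le 2^{-k}2^{l(1-1/P)}\|h_j\chi_l\|_{L^P}$ on $R_k$. For $l\ge k+2$ we use $M(h_j\chi_l)\lesssim 2^{-l}\|h_j\chi_l\|_1$. Minkowski's inequality in $\ell^r$ moves the sum inside. The resulting geometric factors $2^{(l-k)(1-1/P-A)}$ and $2^{(k-l)(A+1/P)}$ are summable precisely in the stated range of $A$, and a discrete Young inequality in $\ell^Q$ then closes the estimate.

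Next comes the iteration. We apply $M_1$ first, viewing $x_1\mapsto g_j(x_1,x')$ for fixed $(x_2,\dots,x_n)$, and use the one-dimensional inequality in the $\dot K^{A_1,Q_1}_{P_1}$ norm. Then we take the outer norms $\dot K^{A_2,Q_2}_{P_2},\dots$ of both sides, which is monotone. Next we treat $M_2$ acting on $\{M_1 g_j\}$. For this step the one-dimensional inequality is needed with the inner quasi-norm $\|\cdot\|_{\dot K^{A_1,Q_1}_{P_1}}$ already taken, i.e.\ a Fefferman--Stein inequality for functions of $x_2$ with values in the Banach lattice $\ell^r(\dot K^{A_1,Q_1}_{P_1})$. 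This is the main obstacle. The maximal operator in $x_2$ does not commute with taking a norm in $x_1$, so one cannot merely use $\|M_2 G\|_{X_1}\le M_2\|G\|_{X_1}$ with a scalar maximal bound. Instead one needs the UMD / Hardy--Littlewood property of the lattice $\ell^r(\dot K^{A_1,Q_1}_{P_1}(\ell^r))$.

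I would try first to establish that $\dot K^{A,Q}_{P}(\mathbb{R})$ with $1<P,Q<\infty$ is a Banach function space on which $M$ is bounded, and likewise for its Köthe dual $\dot K^{-A,Q'}_{P'}$ (same range condition, symmetric). By the Rubio de Francia / Bourgain theorem, a Banach lattice $X$ with $M$ bounded on $X$ and $X'$ in this fashion, or more simply with $X$ $p$-convex and $q$-concave, is a UMD lattice. On such lattices, Bourgain and Rubio de Francia show the lattice-valued Fefferman--Stein inequality $\|\sup$-free $\ell^r$-sum of $MG_j\|_{L^P(X)}$. Combined with the annular decomposition in $x_2$, this gives the $\dot K^{A_2,Q_2}_{P_2}(X)$ version. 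Induction on $i=1,\dots,n$ then finishes the proof, and the case $r=\infty$ is handled by the trivial pointwise domination $\sup_j M g_j\le M\sup_j g_j$ at each step.
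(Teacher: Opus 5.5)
\textbf{The gap is in your rescaling step.} To reach the range $-1/P_i<A_i<1-1/P_i$ you use $t(1-1/p_i)\le 1-t/p_i$. That inequality is equivalent to $t\le 1$. The hypothesis $0<t<\min(\beta,p_-,q_-)$ does not give $t\le 1$, and your sentence ``$t<1$ is allowed only when $t<p_-$'' does not imply it either. For $t>1$ the rescaled exponent can satisfy $A_i\ge 1-1/P_i$, where $M$ is unbounded on $\dot{K}^{A_i,Q_i}_{P_i}(\mathbb{R})$, so your one-dimensional ingredient is unavailable.

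This cannot be patched, because the lemma itself is false in that regime. Take $f_0=\chi_{[0,1]^n}$ and $f_j=0$ for $j\neq 0$. Then $\mathcal{M}_t f_0(x)=\prod_{i=1}^{n}(M\chi_{[0,1]}(x_i))^{1/t}\approx\prod_{i=1}^{n}(1+|x_i|)^{-1/t}$. The mixed Herz norm of a tensor product factorizes, and $\|(1+|\cdot|)^{-1/t}\chi_{k}\|_{L^{p_i}(\mathbb{R})}\approx 2^{k(1/p_i-1/t)}$ for $k\ge 1$. Hence the left-hand side is infinite as soon as $\alpha_i\ge 1/t-1/p_i$ for some $i$. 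On the other hand, $\|f_0\|_{\dot{E}^{\vec{\alpha},\vec{q}}_{\vec{p}}}<\infty$ because $\alpha_i>-1/p_i$. For instance, $p_i=q_i=\beta=2$, $\alpha_i=1/4$, $t=3/2$ satisfy every stated hypothesis, but $1/t-1/p_i=1/6\le 1/4$.

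The correct hypothesis is $-1/p_i<\alpha_i<1/t-1/p_i$, which is exactly your rescaled condition $-1/P_i<A_i<1-1/P_i$. It follows from the stated hypothesis when $t\le 1$. It is also what the paper actually needs: in Lemma~\ref{lamda-equi copy1} one has $a<1$ and $a<1/(\alpha_i+1/p_i)$. Under this corrected hypothesis, or with the extra restriction $t\le 1$, your argument does prove the lemma.

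The paper gives no argument of its own here; it only cites \cite{ZYZ22}. Your route is therefore an independent proof, and apart from the point above it is sound:
\begin{itemize}
\item the one-dimensional annular estimate has the right geometric factors $2^{(l-k)(1-1/P-A)}$ and $2^{(k-l)(A+1/P)}$;
\item you correctly see that $M_i$ does not commute with the inner Herz norms, so a lattice-valued maximal theorem is needed;
\item the annular decomposition transfers to $X$-valued functions, using Minkowski's integral inequality in $X$ for the off-diagonal pieces.
\end{itemize}

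Two corrections are needed. First, the lattice at the second step is $\dot{K}^{A_1,Q_1}_{P_1}(\ell^r)$, with norm $\|(\sum_j|G_j|^r)^{1/r}\|_{\dot{K}^{A_1,Q_1}_{P_1}}$, not $\ell^r(\dot{K}^{A_1,Q_1}_{P_1})$. Second, ``$p$-convex and $q$-concave'' is not a sufficient criterion for UMD: for Banach lattices this only gives super-reflexivity, which is strictly weaker. You do not need any such criterion. After absorbing the weights $2^{kA}$, $\dot{K}^{A,Q}_{P}(\mathbb{R})$ is isometric to an $\ell^{Q}$-sum of the spaces $L^{P}(R_k)$. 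So each lattice $X_{i-1}$ is an iterated $\ell^{Q}/L^{P}$-space over $\ell^r$ with all exponents in $(1,\infty)$, hence UMD by stability of UMD under Bochner $L^{P}$-spaces and $\ell^{Q}$-sums. Bourgain's theorem then bounds the lattice maximal operator on $L^{P_i}(\mathbb{R};X_{i-1})$. The case $r=\infty$ is handled by your pointwise domination.
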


We need the following lemma, which is basically a consequence of Hardy's
inequality in the sequence Lebesgue space $\ell ^{q}.$

\begin{lemma}
\label{lem:lq-inequality}  Let $0<a<1$ and $0<q\leq \infty$. Let  $%
\{\varepsilon_k\}_{k\in \mathbb{Z}}$ be a sequence of positive real numbers
such that  
\begin{equation*}
\big\|\{\varepsilon_k\}_{k\in \mathbb{Z}}\big\|_{\ell^q} = I < \infty.
\end{equation*}
Then the sequences  
\begin{equation*}
\big\{\delta_k\big\}_{k\in \mathbb{Z}}, \quad \delta_k =
\sum_{j=-\infty}^{k} a^{k-j}\varepsilon_j, \qquad \text{and} \qquad \big\{%
\eta_k\big\}_{k\in \mathbb{Z}}, \quad \eta_k = \sum_{j=k}^{\infty}
a^{j-k}\varepsilon_j,
\end{equation*}
belong to $\ell^q$, and  
\begin{equation*}
\big\|\{\delta_k\}_{k\in \mathbb{Z}}\big\|_{\ell^q} + \big\|\{\eta_k\}_{k\in 
\mathbb{Z}}\big\|_{\ell^q} \leq c\, I,
\end{equation*}
where $c>0$ depends only on $a$ and $q$.
\end{lemma}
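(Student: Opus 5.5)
The plan is to treat the two sequences separately and to split into the cases $q\le 1$ and $q>1$; the case $q=\infty$ is absorbed into $q>1$. By symmetry (reindex $k\mapsto -k$), it suffices to handle $\delta_k$: the sequence $\eta_k$ becomes a sequence of the form $\delta$ for the reflected sequence $\tilde\varepsilon_j=\varepsilon_{-j}$, which has the same $\ell^q$ norm. So I will prove $\|\{\delta_k\}\|_{\ell^q}\le c(a,q)\,I$ and then add the two bounds.

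\emph{Case $0<q\le 1$.} I use the elementary inequality $\left(\sum_j b_j\right)^q\le \sum_j b_j^q$ for nonnegative $b_j$. This gives
\begin{equation*}
\delta_k^q\le \sum_{j\le k} a^{(k-j)q}\varepsilon_j^q .
\end{equation*}
Summing over $k$ and interchanging the order of summation (Tonelli, since all terms are nonnegative) yields
\begin{equation*}
\sum_k\delta_k^q\le \sum_j\varepsilon_j^q\sum_{k\ge j}a^{(k-j)q}=\frac{1}{1-a^q}\,I^q .
\end{equation*}

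\emph{Case $1<q\le\infty$.} Here $\delta=\varepsilon * g$ is a discrete convolution on $\mathbb{Z}$, where $g_m=a^m$ for $m\ge 0$ and $g_m=0$ otherwise. Young's inequality ($\ell^1*\ell^q\subset\ell^q$), or simply Minkowski's inequality applied to $\delta=\sum_{m\ge0}a^m\varepsilon_{\cdot-m}$, gives
\begin{equation*}
\|\delta\|_{\ell^q}\le \|g\|_{\ell^1}\,I=\frac{1}{1-a}\,I .
\end{equation*}
This covers $q=\infty$ directly as well. Alternatively one could use Hölder's inequality, splitting $a^{k-j}=a^{(k-j)/2}a^{(k-j)/2}$, and then interchange the sums; but the Minkowski route is cleaner.

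There is no serious obstacle. The only point needing care is that the triangle inequality fails in $\ell^q$ for $q<1$, which is exactly why that range is treated via the $q$-triangle inequality rather than Minkowski. The resulting constant is $c=2\max\{(1-a^q)^{-1/q},(1-a)^{-1}\}$, which depends only on $a$ and $q$, as the statement requires.
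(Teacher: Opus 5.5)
Your proof is correct and complete. The $q$-triangle inequality handles $0<q\le 1$ with constant $(1-a^{q})^{-1/q}$. Minkowski's inequality applied to $\delta=\sum_{m\ge 0}a^{m}\varepsilon_{\cdot-m}$ handles $1<q\le\infty$ with constant $(1-a)^{-1}$. The reflection $j\mapsto -j$ correctly turns $\eta$ into a sequence of $\delta$-type with the same $\ell^{q}$ norm.

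The paper gives no argument of its own; it only remarks that the lemma is basically a consequence of Hardy's inequality in $\ell^{q}$. Your direct argument is the standard proof behind that remark, and it also yields explicit constants.
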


We shall also need the following elementary fact.

\begin{lemma}
\label{Lp-estimate}  Let $0<p\leq \infty$ and $f_k \in L_{\mathrm{loc}}^{p}(%
\mathbb{R}^n)$ for $k\in \mathbb{Z}$.  Then, for any $0<\tau \leq \min(1,p)$%
,  
\begin{equation*}
\Big\|\sum_{k=-\infty}^{\infty} f_k \Big\|_p \leq \Big(\sum_{k=0}^{\infty}
\|f_k\|_p^{\tau}\Big)^{1/\tau}.
\end{equation*}
\end{lemma}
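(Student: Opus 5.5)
The plan is to reduce the inequality to Minkowski's inequality in the Banach space $L^{p/\tau}$, using the elementary $\tau$-triangle inequality for scalars. Throughout I read the right-hand side as the sum over the same index set as the left-hand side, so over all $k\in\mathbb{Z}$. If the index $k=0$ in the statement is intended, one simply assumes $f_k=0$ for $k<0$. We may also assume the right-hand side is finite, since otherwise there is nothing to prove.

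\emph{Step 1 (pointwise $\tau$-inequality).} Since $0<\tau\le 1$, we have $(a+b)^{\tau}\le a^{\tau}+b^{\tau}$ for $a,b\ge 0$. Iterating, and passing to the limit, gives pointwise
\begin{equation*}
\Big|\sum_{k} f_k(x)\Big|^{\tau}\le \Big(\sum_k |f_k(x)|\Big)^{\tau}\le \sum_k |f_k(x)|^{\tau}.
\end{equation*}
This holds wherever the series $\sum_k|f_k(x)|$ converges. Step 2 shows that $\sum_k|f_k|^\tau$ lies in $L^{p/\tau}$, so it is finite a.e. Since $\sum_k|f_k|\le(\sum_k|f_k|^\tau)^{1/\tau}$, the series $\sum_k f_k$ then converges absolutely a.e., and the left-hand side is well defined.

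\emph{Step 2 (Minkowski in $L^{p/\tau}$).} Since $\tau\le p$, we have $r:=p/\tau\in[1,\infty]$, so $\|\cdot\|_{r}$ is a genuine norm. Moreover $\|g\|_p^{\tau}=\||g|^{\tau}\|_{r}$. Applying Step 1, the monotonicity of the norm, and then Minkowski's inequality (the triangle inequality for countable sums of nonnegative functions) yields
\begin{equation*}
\Big\|\sum_k f_k\Big\|_p^{\tau}=\Big\|\Big|\sum_k f_k\Big|^{\tau}\Big\|_{r}\le \Big\|\sum_k |f_k|^{\tau}\Big\|_{r}\le \sum_k \big\||f_k|^{\tau}\big\|_{r}=\sum_k\|f_k\|_p^{\tau}.
\end{equation*}
Taking $\tau$-th roots gives the claim. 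The case $p=\infty$ is covered, with $r=\infty$ and the essential-sup norm.

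\emph{Main technical point.} The only delicate issue is passing from finite sums to the infinite series. I would first prove the inequality for finite partial sums over $|k|\le N$, where everything is elementary. For the countable Minkowski step, the partial sums $\sum_{|k|\le N}|f_k|^\tau$ increase to $\sum_k |f_k|^\tau$. By the monotone convergence theorem for $r<\infty$, or directly from the definition of the essential supremum for $r=\infty$, we get $\|\sum_k|f_k|^\tau\|_r=\lim_N\|\sum_{|k|\le N}|f_k|^\tau\|_r\le\sum_k\|f_k\|_p^\tau$. This yields both the a.e. finiteness used in Step 1 and the final bound.
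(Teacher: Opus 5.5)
Your proof is correct. The paper gives no proof of this lemma (it calls it an elementary fact), and your argument is the standard one: the pointwise $\tau$-triangle inequality followed by Minkowski's inequality in $L^{p/\tau}$ with $p/\tau\ge 1$, including the careful passage from finite to infinite sums. Your reading of the index set is also the right one: as printed, the left-hand sum runs over $k\in\mathbb{Z}$ but the right-hand sum starts at $k=0$, so the inequality would fail for a single nonzero $f_{-1}$; the right-hand sum must be taken over all $k\in\mathbb{Z}$.
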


\section{The $\varphi$-transform characterization of the mixed-norm Herz-type Triebel-Lizorkin space}
In this section, we introduce the spaces
$\dot{E}_{\vec{p}}^{\vec{\alpha},\vec{q}}F_{\beta}^{s}$ and establish their
$\varphi$-transform characterizations.
Let $\Phi,\varphi\in\mathcal{S}(\mathbb{R}^{n})$ be such that

\begin{equation}
\operatorname{supp}\mathcal{F}\Phi
\subset \{\xi\in\mathbb{R}^{n}:|\xi|\le 2\},
\qquad
|\mathcal{F}\Phi(\xi)|\ge c>0,
\label{Ass1}
\end{equation}
for all $|\xi|\le \frac{5}{3}$, and
\begin{equation}
\operatorname{supp}\mathcal{F}\varphi
\subset
\left\{\xi\in\mathbb{R}^{n}:\frac12\le |\xi|\le 2\right\},
\qquad
|\mathcal{F}\varphi(\xi)|\ge c>0,
\label{Ass2}
\end{equation}
for all $\frac35\le |\xi|\le \frac53$, where $c$ is a positive constant.
Throughout this section, we set
\[
\widetilde{\varphi}(x)=\overline{\varphi(-x)},
\qquad x\in\mathbb{R}^{n}.
\]

We now introduce the function spaces that will be studied in this section.

\begin{definition}
	\label{B-F-def}
	Let $s\in\mathbb{R}$,
	$0<\vec{p},\vec{q}<\infty$,
	$\vec{\alpha}=(\alpha_{1},\ldots,\alpha_{n})\in\mathbb{R}^{n}$,
	$0<\beta\le\infty$, and let $\Phi$ and $\varphi$ satisfy
	\eqref{Ass1} and \eqref{Ass2}, respectively. For
	$k\in\mathbb{N}$, define
	$\varphi_{k}=2^{kn}\varphi(2^{k}\cdot).$
	\medskip
	The \emph{mixed-norm Herz-type Triebel--Lizorkin space}
	$\dot{E}_{\vec{p}}^{\vec{\alpha},\vec{q}}F_{\beta}^{s}$
	is defined as the collection of all
	$f\in\mathcal{S}'(\mathbb{R}^{n})$ such that
	\[
	\|f\|_{\dot{E}_{\vec{p}}^{\vec{\alpha},\vec{q}}F_{\beta}^{s}}
	=
	\left\|
	\left(
	\sum_{k=0}^{\infty}
	2^{ks\beta}
	|\varphi_{k}*f|^{\beta}
	\right)^{1/\beta}
	\right\|_{\dot{E}_{\vec{p}}^{\vec{\alpha},\vec{q}}
		(\mathbb{R}^{n})}
	<\infty,
	\]
	where $\varphi_{0}$ is replaced by $\Phi$, with the usual modification
	when $\beta=\infty$.
\end{definition}

Let $s\in\mathbb{R}$, $0<\vec{p}<\infty$, and
$0<\beta\le\infty$. Using the system
$\{\varphi_{k}\}_{k\in\mathbb{N}_{0}}$, we define the quasi-norms
\[
\|f\|_{F_{\vec{p},\beta}^{s}}
=
\left\|
\left(
\sum_{k=0}^{\infty}
2^{ks\beta}
|\varphi_{k}*f|^{\beta}
\right)^{1/\beta}
\right\|_{L^{\vec{p}}(\mathbb{R}^{n})}
 .
\]

The mixed-norm Triebel--Lizorkin  space
$F_{\vec{p},\beta}^{s}$ consists of all
$f\in\mathcal{S}'(\mathbb{R}^{n})$ for which
$\|f\|_{F_{\vec{p},\beta}^{s}}<\infty$.

It is well known that these spaces are independent of the particular
choice of the system
$\{\varphi_{k}\}_{k\in\mathbb{N}_{0}}$, up to equivalence of
quasi-norms. Further details concerning the classical theory of these
spaces, including the homogeneous case, may be found in
\cite{GJN17,JS07,JS08,JHS12}. For the classical 
Triebel--Lizorkin spaces, we refer the reader to
\cite{FJ86,FJ90,FrJaWe01,T83,T2}.

It is immediate from the definitions that, when
$\vec{\alpha}=0$ and $\vec{p}=\vec{q}$,
\[
\dot{E}_{\vec{p}}^{0,\vec{p}}F_{\beta}^{s}
=
F_{\vec{p},\beta}^{s}.
\]

Let $\Phi$ and $\varphi$ satisfy \eqref{Ass1} and \eqref{Ass2},
respectively. By \cite[Section 12]{FJ90}, there exist functions
$\Psi\in\mathcal{S}(\mathbb{R}^{n})$ satisfying \eqref{Ass1} and
$\psi\in\mathcal{S}(\mathbb{R}^{n})$ satisfying \eqref{Ass2} such that
\begin{equation}
\mathcal{F}\widetilde{\Phi}(\xi)\mathcal{F}\Psi(\xi)
+
\sum_{k=1}^{\infty}
\mathcal{F}\widetilde{\varphi}(2^{-k}\xi)
\mathcal{F}\psi(2^{-k}\xi)
=
1,
\qquad
\xi\in\mathbb{R}^{n}.
\label{Ass3}
\end{equation}

A fundamental tool in the study of the above spaces is the following
Calder\'{o}n reproducing formula; see \cite[(12.4)]{FJ90} and
\cite[Lemma 2.3]{YSY10}.

\begin{lemma}
	\label{DW-lemma1}
	Let $\Phi,\Psi\in\mathcal{S}(\mathbb{R}^{n})$ satisfy \eqref{Ass1},
	and let $\varphi,\psi\in\mathcal{S}(\mathbb{R}^{n})$ satisfy
	\eqref{Ass2}. Assume that \eqref{Ass3} holds. Then, for every
	$f\in\mathcal{S}'(\mathbb{R}^{n})$,
	\begin{align}
	f
	&=
	\widetilde{\Phi}*\Psi*f
	+\sum_{k=1}^{\infty}
	\widetilde{\varphi}_{k}*\psi_{k}*f
	\notag\\
	&=
	\sum_{m\in\mathbb{Z}^{n}}
	\widetilde{\Phi}*f(m)\,\Psi_{m}
	+
	\sum_{k=1}^{\infty}
	2^{-kn/2}
	\sum_{m\in\mathbb{Z}^{n}}
	\widetilde{\varphi}_{k}*f(2^{-k}m)\,
	\psi_{k,m},
	\label{proc2}
	\end{align}
	with convergence in $\mathcal{S}'(\mathbb{R}^{n})$, where
	\[
	\Psi_{m}=\Psi(\cdot-m),
	\qquad
	\psi_{k,m}
	=
	2^{kn/2}\psi(2^{k}\cdot-m),
	\qquad
	m\in\mathbb{Z}^{n},\ k\in\mathbb{N}.
	\]
\end{lemma}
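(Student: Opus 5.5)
The first identity comes from \eqref{Ass3} by Fourier transform, and the second from Fourier series (sampling). I would argue in the following order.

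\emph{Step 1: the continuous decomposition.} For $f\in\mathcal{S}'(\mathbb{R}^{n})$ set
\[
f_N=\widetilde{\Phi}*\Psi*f+\sum_{k=1}^{N}\widetilde{\varphi}_{k}*\psi_{k}*f .
\]
Since $\mathcal{F}\widetilde{\varphi}_k(\xi)=\mathcal{F}\widetilde{\varphi}(2^{-k}\xi)$ with the normalization $\varphi_k=2^{kn}\varphi(2^k\cdot)$ (up to the harmless factor $(2\pi)^{n/2}$ coming from the convolution theorem, absorbed into the normalization of \eqref{Ass3}), we get $\mathcal{F}f_N=m_N\,\mathcal{F}f$. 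Here
\[
m_N(\xi)=\mathcal{F}\widetilde{\Phi}(\xi)\mathcal{F}\Psi(\xi)+\sum_{k=1}^{N}\mathcal{F}\widetilde{\varphi}(2^{-k}\xi)\mathcal{F}\psi(2^{-k}\xi).
\]
By \eqref{Ass3} and the support conditions, $m_N(\xi)=1$ for $|\xi|\le 2^{N-1}$. Moreover $m_N(\xi)=1-\sum_{k>N}\mathcal{F}\widetilde{\varphi}(2^{-k}\xi)\mathcal{F}\psi(2^{-k}\xi)=\theta(2^{-N}\xi)$ for a fixed $\theta\in\mathcal{S}$ with $\theta=1$ near $0$. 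For a test function $g\in\mathcal{S}$, the functions $\theta(2^{-N}\cdot)\mathcal{F}^{-1}g$ converge to $\mathcal{F}^{-1}g$ in $\mathcal{S}$, since every derivative of $\theta(2^{-N}\cdot)-1$ is bounded and vanishes on $|\xi|\le 2^{N-1}$, while $\mathcal{F}^{-1}g$ decays rapidly. Hence $\langle f_N,g\rangle\to\langle f,g\rangle$, which proves the first equality in $\mathcal{S}'$.

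\emph{Step 2: discretizing each block.} Fix $k\ge1$ and note that $\widetilde{\varphi}_k*f$ is a smooth function of polynomial growth whose Fourier transform is supported in $\{|\xi|\le 2^{k+1}\}$. Since $\mathcal{F}\psi_k$ is supported in the same ball, I would take $\gamma\in\mathcal{S}$ with $\mathcal{F}\gamma=1$ on $\{|\xi|\le 2\}$ and support in $[-\pi,\pi]^n$ (rescale if needed). Periodizing then gives the Poisson summation identity
\[
\mathcal{F}(\widetilde{\varphi}_k*f)(\xi)=\sum_{m}c\,2^{-kn}\,(\widetilde{\varphi}_k*f)(2^{-k}m)\,e^{-i2^{-k}m\cdot\xi}\quad \text{on } 2^{k}[-\pi,\pi]^n .
\]
Multiplying by $\mathcal{F}\psi_k$ yields $\widetilde{\varphi}_k*\psi_k*f=2^{-kn/2}\sum_m \widetilde{\varphi}_k*f(2^{-k}m)\,\psi_{k,m}$. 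Here $\psi_{k,m}$ arises from the modulation $e^{-i2^{-k}m\cdot\xi}\mathcal{F}\psi(2^{-k}\xi)$, and the constants are fixed by the normalization $\psi_{k,m}=2^{kn/2}\psi(2^k\cdot-m)$. The case $k=0$ is identical, with $\Phi,\Psi$ and lattice $\mathbb{Z}^n$.

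\emph{Step 3: the main obstacle.} The difficulty is justifying the sampling identity for a tempered distribution, not just an $L^2$ function, together with convergence of the $m$-series in $\mathcal{S}'$. Because $\widetilde{\varphi}_k*f$ grows at most polynomially, $|\widetilde{\varphi}_k*f(2^{-k}m)|\lesssim(1+|m|)^{L}$, while $\psi_{k,m}$ decays rapidly away from $2^{-k}m$. The series therefore converges absolutely in $\mathcal{S}'$.

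To verify the identity, I would pair both sides with $g\in\mathcal{S}$. The claim reduces to
\[
\langle f,\ \widetilde{\varphi}_k^{\,*}*\psi_k^{\,*}*g\rangle=\sum_m\langle f,\ \widetilde{\varphi}_k^{\,*}(\cdot-2^{-k}m)\rangle\,2^{-kn/2}\langle\psi_{k,m},g\rangle ,
\]
where $^{*}$ denotes the reflection $h\mapsto h(-\cdot)$. This follows from the corresponding Schwartz-function identity, namely that $\psi_k*g$ is band-limited and equals its sampling expansion against $\gamma$. That expansion converges in $\mathcal{S}$, which permits moving $f$ inside the sum. This is exactly the argument of \cite[(12.4)]{FJ90} and \cite[Lemma 2.3]{YSY10}, which I would cite for the routine estimates.
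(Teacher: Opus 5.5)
Your proposal is correct, and it follows the standard route. The paper gives no proof of its own and defers to \cite[(12.4)]{FJ90} and \cite[Lemma 2.3]{YSY10}. There the first identity comes from the partial-sum multipliers $m_N=\mathcal{F}\widetilde{\Phi}(2^{-N}\cdot)\,\mathcal{F}\Psi(2^{-N}\cdot)$ acting on test functions, and the second from the Fourier-series (sampling) expansion of each band-limited block on $2^{k}[-\pi,\pi]^{n}$, transferred to $\mathcal{S}'(\mathbb{R}^{n})$ by duality as in your Step 3.

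One small correction: under the paper's normalization of $\mathcal{F}$, the two convolutions produce a factor $(2\pi)^{n}$, not $(2\pi)^{n/2}$. This normalization inconsistency is already present in \eqref{Ass3} as stated and does not affect your argument.
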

Let $\Phi,\Psi,\varphi,\psi\in\mathcal{S}(\mathbb{R}^{n})$ satisfy
\eqref{Ass1}--\eqref{Ass3}. The $\varphi$-transform
$S_{\varphi}$ is defined by
\[
(S_{\varphi}f)_{0,m}
=
\langle f,\Phi_{m}\rangle,
\qquad
(S_{\varphi}f)_{k,m}
=
\langle f,\varphi_{k,m}\rangle,
\]
where
\[
\Phi_{m}=\Phi(\cdot-m),
\qquad
\varphi_{k,m}
=
2^{kn/2}\varphi(2^{k}\cdot-m),
\quad
m\in\mathbb{Z}^{n},
\ k\in\mathbb{N}.
\]

The inverse $\varphi$-transform $T_{\psi}$ is defined by
\[
T_{\psi}\lambda
=
\sum_{m\in\mathbb{Z}^{n}}
\lambda_{0,m}\Psi_{m}
+
\sum_{k=1}^{\infty}
\sum_{m\in\mathbb{Z}^{n}}
\lambda_{k,m}\psi_{k,m},
\]
where
$\lambda=\{\lambda_{k,m}\}_{k\in\mathbb{N}_{0},\,m\in\mathbb{Z}^{n}}
\subset\mathbb{C}$; see \cite[p.~131]{FJ90}.

We now introduce the sequence spaces associated with $\dot{E}_{\vec{p}}^{\vec{\alpha},\vec{q}}F_{\beta}^{s}$.

\begin{definition}
	\label{sequence-space}
	Let
	$s\in\mathbb{R}$,
	$0<\vec{p},\vec{q}<\infty$,
	$\vec{\alpha}=(\alpha_{1},\ldots,\alpha_{n})\in\mathbb{R}^{n}$,
	and $0<\beta\le\infty$. The space
	$\dot{E}_{\vec{p}}^{\vec{\alpha},\vec{q}}f_{\beta}^{s}$
	is defined as the collection of all complex-valued sequences
	$\lambda=\{\lambda_{k,m}\}_{k\in\mathbb{N}_{0},\,m\in\mathbb{Z}^{n}}$
	such that
	\[
	\|\lambda\|_{\dot{E}_{\vec{p}}^{\vec{\alpha},\vec{q}}f_{\beta}^{s}}
	=
	\left\|
	\left(
	\sum_{v=0}^{\infty}
	\sum_{m\in\mathbb{Z}^{n}}
	2^{v(s+\frac{n}{2})\beta}
	|\lambda_{v,m}|^{\beta}
	\chi_{v,m}
	\right)^{1/\beta}
	\right\|_{\dot{E}_{\vec{p}}^{\vec{\alpha},\vec{q}}(\mathbb{R}^{n})}
	<\infty.
	\]
\end{definition}

Observe that the spaces
$\dot{E}_{\vec{p}}^{\vec{\alpha},\vec{q}}F_{\beta}^{s}$
are quasi-normed. If
\[
d=\min\{1,p_{-},q_{-},\beta\},
\]
then
\[
\|f+g\|_{\dot{E}_{\vec{p}}^{\vec{\alpha},\vec{q}}F_{\beta}^{s}}
\lesssim
\|f\|_{\dot{E}_{\vec{p}}^{\vec{\alpha},\vec{q}}F_{\beta}^{s}}
+
\|g\|_{\dot{E}_{\vec{p}}^{\vec{\alpha},\vec{q}}F_{\beta}^{s}}
\]
for all
$f,g\in
\dot{E}_{\vec{p}}^{\vec{\alpha},\vec{q}}F_{\beta}^{s}$.

The following lemma shows that the inverse $\varphi$-transform is
well defined on
$\dot{E}_{\vec{p}}^{\vec{\alpha},\vec{q}}f_{\beta}^{s}$.

\begin{lemma}
	\label{Inv-phi-trans}
	Let
	$s\in\mathbb{R}$,
	$0<\vec{p},\vec{q}<\infty$,
	$\vec{\alpha}=(\alpha_{1},\ldots,\alpha_{n})\in\mathbb{R}^{n}$,
	$0<\beta\le\infty$,
	and assume that
	\[
	\alpha_{i}>-\frac1{p_{i}},
	\qquad
	i\in \{1,\dots,n\}.
	\]
	Let $\Psi$ and $\psi$ satisfy
	\eqref{Ass1} and \eqref{Ass2}, respectively.
	Then, for every
	$\lambda\in
	\dot{E}_{\vec{p}}^{\vec{\alpha},\vec{q}}f_{\beta}^{s}$,
	the series
	\[
	T_{\psi}\lambda
	=
	\sum_{m\in\mathbb{Z}^{n}}
	\lambda_{0,m}\Psi_{m}
	+
	\sum_{k=1}^{\infty}
	\sum_{m\in\mathbb{Z}^{n}}
	\lambda_{k,m}\psi_{k,m}
	\]
	converges in
	$\mathcal{S}'(\mathbb{R}^{n})$.
	Moreover,
	\[
	T_{\psi}:
	\dot{E}_{\vec{p}}^{\vec{\alpha},\vec{q}}f_{\beta}^{s}
	\longrightarrow
	\mathcal{S}'(\mathbb{R}^{n})
	\]
	is continuous.
\end{lemma}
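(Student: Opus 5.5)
The plan is to reduce everything to a pointwise bound on the coefficients $\lambda_{v,m}$ in terms of the quasi-norm of $\lambda$. Once we know
\begin{equation*}
|\lambda_{v,m}|\lesssim 2^{vN}(1+|m|)^{N}\|\lambda\|_{\dot{E}_{\vec{p}}^{\vec{\alpha},\vec{q}}f_{\beta}^{s}},
\end{equation*}
with constants independent of $v,m$ and some $N>0$ depending only on $s,\vec p,\vec\alpha$, convergence in $\mathcal{S}'$ follows from the rapid decay of the pairings $\langle\psi_{k,m},\phi\rangle$.

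\textbf{Step 1: coefficient bound.} Since $\chi_{v,m}$ is one of the terms in the sum defining the quasi-norm, we have
\begin{equation*}
2^{v(s+\frac n2)}|\lambda_{v,m}|\,\|\chi_{v,m}\|_{\dot{E}_{\vec{p}}^{\vec{\alpha},\vec{q}}}\le\|\lambda\|_{\dot{E}_{\vec{p}}^{\vec{\alpha},\vec{q}}f_{\beta}^{s}}.
\end{equation*}
Because $Q_{v,m}=\prod_i Q^i_{v,m_i}$ is a product set, the iterated norm factors:
\begin{equation*}
\|\chi_{v,m}\|_{\dot{E}_{\vec{p}}^{\vec{\alpha},\vec{q}}}=\prod_{i=1}^n\|\chi_{v,m_i}\|_{\dot K^{\alpha_i,q_i}_{p_i}(\mathbb R)}.
\end{equation*}
So we only need a one-dimensional lower bound for the Herz norm of $\chi_I$, with $I=2^{-v}([0,1)+m_i)$.

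\textbf{Step 2: the one-dimensional estimate.} Pick a single annulus $R_{k}$ with $|R_k\cap I|\gtrsim |I|$. Then
\begin{equation*}
\|\chi_I\|_{\dot K^{\alpha_i,q_i}_{p_i}}\ge 2^{k\alpha_i}|R_k\cap I|^{1/p_i}.
\end{equation*}
There are two cases.
\begin{itemize}
\item If $|m_i|\ge 2$, then $I$ sits at distance about $2^{-v}|m_i|$ from the origin. It meets at most two annuli, one of which carries half of its length. This gives the lower bound $2^{-v/p_i}(2^{-v}(1+|m_i|))^{\alpha_i}$.
\item If $|m_i|\le1$, then $I$ contains an interval of the form $[2^{k-1},2^k)$ or its reflection, with $2^k\approx2^{-v}$. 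This gives $\gtrsim 2^{-v(\alpha_i+1/p_i)}$.
\end{itemize}
This is exactly where the hypothesis $\alpha_i>-1/p_i$ enters. In the small-$|m_i|$ case one could also use annuli $R_k$ with $k\to-\infty$, and the resulting series $\sum_{k\le k_0}2^{k(\alpha_i+1/p_i)q_i}$ converges precisely under this hypothesis. So the quantity is finite and comparable to the single-annulus contribution; we need no upper bound, only nontriviality. Combining the cases, $\|\chi_{v,m}\|^{-1}\lesssim 2^{vN_1}(1+|m|)^{N_2}$, which gives the polynomial bound on $\lambda_{v,m}$. The care here is mainly bookkeeping over signs of $\alpha_i$.

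\textbf{Step 3: pairing with test functions.} For $\phi\in\mathcal S$ we use the standard estimate (Frazier–Jawerth, based on the vanishing moments of $\psi$, since $\mathcal F\psi$ vanishes near $0$): for every $L$ and $M$ large,
\begin{equation*}
|\langle\psi_{k,m},\phi\rangle|\lesssim 2^{-kL}(1+2^{-k}|m|)^{-M}\|\phi\|_{\mathcal S_{M'}},
\end{equation*}
and similarly $|\langle\Psi_m,\phi\rangle|\lesssim(1+|m|)^{-M}\|\phi\|_{\mathcal S_{M'}}$. Then
\begin{equation*}
\sum_{k,m}|\lambda_{k,m}||\langle\psi_{k,m},\phi\rangle|\lesssim\|\lambda\|\|\phi\|_{\mathcal S_{M'}}\sum_k2^{k(N-L)}\sum_m(1+|m|)^N(1+2^{-k}|m|)^{-M}.
\end{equation*}
The inner sum is at most $2^{k(n+N)}$ once $M>N+n$. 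Choosing $L$ large makes the total finite. This gives absolute convergence of $\sum\lambda_{k,m}\langle\psi_{k,m},\phi\rangle$ for every $\phi$, hence convergence in $\mathcal S'$. It also gives the bound $|\langle T_\psi\lambda,\phi\rangle|\lesssim\|\lambda\|\,\|\phi\|_{\mathcal S_{M'}}$, which is the continuity claim.

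The main obstacle is Step 2, the uniform lower bound on the mixed Herz norm of dyadic cubes, particularly near the coordinate hyperplanes. This is the only place where the geometry of the annuli and the hypothesis $\alpha_i>-1/p_i$ matter. Step 3 is standard once that bound is in hand.
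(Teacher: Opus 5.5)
Your proof is correct and has the same overall structure as the paper's. Both first prove a polynomial bound $|\lambda_{v,m}|\lesssim 2^{vN}(1+|m|)^{N}\|\lambda\|_{\dot{E}_{\vec{p}}^{\vec{\alpha},\vec{q}}f_{\beta}^{s}}$, then use the vanishing-moment decay of $\langle\psi_{k,m},\varphi\rangle$ and sum over $k$ and $m$. The difference is in how the coefficient bound is obtained.

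The paper proves \eqref{main-est} by applying H\"older's inequality twice:
\begin{itemize}
\item first in $L^{p_1}$, with $\frac1{r_1}=\frac1{p_1}+\frac1{t_1}$;
\item then over the annulus index $j_1$, with $\frac1{r_1}=\frac1{q_1}+\frac1{v_1}$.
\end{itemize}
It then sums the geometric series $\sum_{2^{j_1-1}\le 1+|m_1|}2^{j_1(\frac1{t_1}-\alpha_1)v_1}$ and inducts on $n$. It invokes $\alpha_i>-\frac1{p_i}$ to choose $r_i<(\alpha_i+\frac1{p_i})^{-1}$, so that $\frac1{t_i}-\alpha_i>0$. The result carries the cruder factor $(1+|m_i|)^{\frac1{t_i}-\alpha_i}$.

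You instead use three ingredients:
\begin{itemize}
\item monotonicity of the mixed Herz quasi-norm;
\item the tensor-product structure of $\chi_{v,m}$, which factors the iterated norm and makes the induction unnecessary;
\item a single annulus carrying half of $Q^{\,i}_{v,m_i}$.
\end{itemize}
This is more elementary and gives the sharp lower bound $2^{-v/p_i}\bigl(2^{-v}(1+|m_i|)\bigr)^{\alpha_i}$.

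One correction to your commentary: your Step 2 never actually uses $\alpha_i>-\frac1{p_i}$, because a lower bound needs no finiteness. If $\|\chi_{v,m}\|_{\dot{E}_{\vec{p}}^{\vec{\alpha},\vec{q}}}$ were infinite, $\lambda_{v,m}$ would simply have to vanish, and Step 3 uses only the polynomial bound. So your route shows the lemma holds for every $\vec{\alpha}\in\mathbb{R}^{n}$.
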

\begin{proof}
	
	Choose
	\[
	0<r_i<
	\min\left\{
	\frac{1}{\alpha_i+\frac1{p_i}},
	\,p_i,
	\,q_i
	\right\},
	\qquad i\in \{1,\dots,n\},
	\]
	and let
	\[
	\frac1{\vec r}
	=
	\frac1{\vec p}
	+
	\frac1{\vec t},
	\qquad
	\frac1{\vec r}
	=
	\frac1{\vec q}
	+
	\frac1{\vec v},
	\]
	where
	$\vec r,\vec t,\vec v\in(0,\infty)^n$.
	Let
	$\lambda\in
	\dot{E}_{\vec p}^{\vec\alpha,\vec q}f_\beta^s$
	and
	$\varphi\in\mathcal S(\mathbb R^n)$.
	Set
	\[
	I_1
	=
	\sum_{m\in\mathbb Z^n}
	|\lambda_{0,m}|
	\,|\langle \Psi_m,\varphi\rangle|,
	\qquad
	I_2
	=
	\sum_{k=1}^{\infty}
	\sum_{m\in\mathbb Z^n}
	|\lambda_{k,m}|
	\,|\langle \psi_{k,m},\varphi\rangle|.
	\]
	
	It suffices to show that there exist
	$M\in\mathbb N$ and a constant $c>0$ such that
	\[
	I_1+I_2
	\le
	c\,\|\varphi\|_{\mathcal S_M}
	\|\lambda\|_{\dot E_{\vec p}^{\vec\alpha,\vec q}f_\beta^s}.
	\]
	
	\medskip
	
	\noindent
	\textit{Estimate of $I_1$.}
	Let
	$m\in\mathbb Z^n$ and choose
	$L,M\in\mathbb N$ with
	$M>L+n$.
	Since
	$\varphi,\Psi\in\mathcal S(\mathbb R^n)$,
	we obtain
	\begin{align*}
	|\langle \Psi_m,\varphi\rangle|
	&\le
	\int_{\mathbb R^n}
	|\Psi(x-m)|\,|\varphi(x)|\,dx
	\\
	&\le
	\|\varphi\|_{\mathcal S_M}
	\|\Psi\|_{\mathcal S_L}
	\int_{\mathbb R^n}
	(1+|x-m|)^{-L-n}
	(1+|x|)^{-M-n}\,dx
	\\
	&\lesssim
	\|\varphi\|_{\mathcal S_M}
	\|\Psi\|_{\mathcal S_L}
	(1+|m|)^{-L-n}.
	\end{align*}
	
	The last estimate follows from
	\[
	(1+|x-m|)^{-L-n}
	\le
	(1+|m|)^{-L-n}(1+|x|)^{L+n},
	\qquad
	x\in\mathbb R^n,\ m\in\mathbb Z^n.
	\]
	
We claim that
\begin{equation}
|\gamma|
\lesssim
\prod_{i=1}^{n}(1+|m_i|)^{\frac1{t_i}-\alpha_i}
\,
\Big\|
\cdots
\big\|
\big\|
\gamma \chi_{0,m_1}
\big\|_{\dot K_{p_1}^{\alpha_1,q_1}}
\chi_{0,m_2}
\big\|_{\dot K_{p_2}^{\alpha_2,q_2}}
\cdots
\chi_{0,m_n}
\Big\|_{\dot K_{p_n}^{\alpha_n,q_n}},
\label{main-est}
\end{equation}
for every $\gamma\in\mathbb C$ such that the right-hand side of
\eqref{main-est} is finite. Taking $\gamma=\lambda_{0,m}$ in
\eqref{main-est}, we obtain
\begin{equation*}
|\lambda_{0,m}|
\lesssim
\prod_{i=1}^{n}(1+|m_i|)^{\frac1{t_i}-\alpha_i}
\,
\|\lambda\|_{\dot E_{\vec p}^{\vec\alpha,\vec q}f_\beta^s}.
\end{equation*}

We prove \eqref{main-est} by induction on $n$.
First, let $n=1$. By H\"{o}lder's inequality,
\begin{align}
|\gamma|^{r_1}
&=
\frac1{|Q_{0,m_1}|}
\sum_{j_1=-\infty}^{\infty}
\|\gamma\chi_{0,m_1}\chi_{R_{j_1}}\|_{L^{{r_{1}}}(\mathbb R)}^{\,r_1}
\notag\\
&\le
c
\sum_{j_1=-\infty}^{\infty}
\|\gamma\chi_{0,m_1}\chi_{R_{j_1}}\|_{L^{{p_{1}}}(\mathbb R)}^{\,r_1}
\,
\|\chi_{0,m_1}\chi_{R_{j_1}}\|_{L^{{t_{1}}}(\mathbb R)}^{\,r_1}
\notag\\
&\le
c
\sum_{j_1=-\infty}^{\infty}
\|\gamma\chi_{0,m_1}\chi_{R_{j_1}}\|_{L^{{p_{1}}}(\mathbb R)}^{\,r_1}
\,
\|\chi_{R_{j_1}}\|_{L^{{t_{1}}}(\mathbb R)}^{\,r_1},
\label{sum1}
\end{align}
where the positive constant $c$ is independent of $m_1\in\mathbb Z$ and $\gamma$.

Since
\[
\chi_{0,m_1}\chi_{R_{j_1}}=0
\quad\text{whenever}\quad
2^{j_1-1}>1+|m_1|,
\]
the sum in \eqref{sum1} reduces to
\begin{equation}
\sum_{\substack{j_1\in\mathbb Z\\ 2^{j_1-1}\le 1+|m_1|}}
\|\gamma\chi_{0,m_1}\chi_{R_{j_1}}\|_{L^{{p_{1}}}(\mathbb R)}^{\,r_1}
\,
\|\chi_{R_{j_1}}\|_{L^{{t_{1}}}(\mathbb R)}^{\,r_1}.
\label{sum2}
\end{equation}

Applying H\"{o}lder's inequality in sequence spaces, we obtain
\begin{align}
\eqref{sum2}
&\le
\Bigg(
\sum_{\substack{j_1\in\mathbb Z\\ 2^{j_1-1}\le 1+|m_1|}}
2^{j_1\alpha_1 q_1}
\|\gamma\chi_{0,m_1}\chi_{R_{j_1}}\|_{L^{{p_{1}}}(\mathbb R)}^{\,q_1}
\Bigg)^{r_1/q_1}
\notag\\
&\qquad\times
\Bigg(
\sum_{\substack{j_1\in\mathbb Z\\ 2^{j_1-1}\le 1+|m_1|}}
2^{-j_1\alpha_1 v_1}
\|\chi_{R_{j_1}}\|_{L^{{t_{1}}}(\mathbb R)}^{\,v_1}
\Bigg)^{r_1/v_1}
\notag\\
&\le
c
\|\gamma\chi_{0,m_1}\|_{\dot K_{p_1}^{\alpha_1,q_1}}^{\,r_1}
\Bigg(
\sum_{\substack{j_1\in\mathbb Z\\ 2^{j_1-1}\le 1+|m_1|}}
2^{j_1(\frac1{t_1}-\alpha_1)v_1}
\Bigg)^{r_1/v_1}
\notag\\
&\le
c
(1+|m_1|)^{(\frac1{t_1}-\alpha_1)r_1}
\,
\|\gamma\chi_{0,m_1}\|_{\dot K_{p_1}^{\alpha_1,q_1}}^{\,r_1},
\label{sum3}
\end{align}
since $\frac1{t_1}-\alpha_1>0$.

Combining \eqref{sum1} and \eqref{sum3}, we conclude that
\[
|\gamma|
\lesssim
(1+|m_1|)^{\frac1{t_1}-\alpha_1}
\,
\|\gamma\chi_{0,m_1}\|_{\dot K_{p_1}^{\alpha_1,q_1}}.
\]

This proves \eqref{main-est} for $n=1$. The general case follows by induction on $n$.

Finally, choosing
\[
L>\boldsymbol{\alpha}-\frac1{\mathbf t}+n,
\]
we obtain
\[
I_1
\lesssim
\|\varphi\|_{\mathcal S_M}
\,
\|\lambda\|_{\dot E_{\vec p}^{\vec\alpha,\vec q}f_\beta^s}.
\]
	
	\medskip
	
	\noindent
	\textit{Estimate of $I_2$.}
	We recall the following estimate (see \cite[Lemma 2.4]{YSY10}):
	since $\psi$ has vanishing moments of arbitrary order, for every
	$L,M>0$ there exists a constant
	$C=C(M,n)>0$ such that
	\begin{equation}
	|\psi_k*\varphi(x)|
	\le
	C\,2^{-kL}
	\|\psi\|_{\mathcal S_{M+1}}
	\|\varphi\|_{\mathcal S_{M+1}}
	(1+|x|)^{-n-L},
	\label{convolution}
	\end{equation}
	for all $k\in\mathbb N$ and
	$x\in\mathbb R^n$.
	
	Let
	$\breve\varphi=\varphi(-\cdot)$.
	By \eqref{convolution},
	\begin{align*}
	|\langle\psi_{k,m},\varphi\rangle|
	&=
	2^{-k\frac{n}{2}}
	|\psi_k*\breve\varphi(-2^{-k}m)|
	\\
	&\lesssim
	2^{-k(\frac n2+L)}
	\|\psi\|_{\mathcal S_{M+1}}
	\|\varphi\|_{\mathcal S_{M+1}}
	(1+|2^{-k}m|)^{-n-L}.
	\end{align*}
	
	Proceeding exactly as in the estimate of $I_1$, we obtain
	\[
	|\lambda_{k,m}|
	\lesssim
	2^{\frac{k}{\mathbf r}}
	(1+|2^{-k}m|)^{\frac1{\mathbf t}-\boldsymbol{\alpha}}
	\|\lambda\|_
	{\dot E_{\vec p}^{\vec\alpha,\vec q}f_\beta^s}.
	\]
	
	Consequently,
	\begin{align*}
	I_2
	&\lesssim
	\|\varphi\|_{\mathcal S_{M+1}}
	\|\psi\|_{\mathcal S_{M+1}}
	\|\lambda\|_
	{\dot E_{\vec p}^{\vec\alpha,\vec q}f_\beta^s}
	\\
	&\qquad\times
	\sum_{k=1}^{\infty}
	\sum_{m\in\mathbb Z^n}
	2^{k(\frac1{\mathbf r}-s-n-L)}
	(1+|2^{-k}m|)^{
		\frac1{\mathbf t}
		-\boldsymbol{\alpha}
		-n-L}.
	\end{align*}
	
	Choosing $L$ sufficiently large, the above series converges, and hence
	\[
	I_2
	\lesssim
	\|\varphi\|_{\mathcal S_{M+1}}
	\|\psi\|_{\mathcal S_{M+1}}
	\|\lambda\|_
	{\dot E_{\vec p}^{\vec\alpha,\vec q}f_\beta^s}.
	\]
	
	Combining the estimates for $I_1$ and $I_2$ completes the proof.
\end{proof}

For a sequence $\lambda =\{\lambda _{k,m}\}_{k\in \mathbb{N}_{0},m\in 
\mathbb{Z}^{n}}\subset \mathbb{C},0<r\leq \infty $ and a fixed $d>0$, set%
\begin{equation*}
\lambda _{k,m,r,d}^{\ast }=\Big(\sum_{h\in \mathbb{Z}^{n}}\frac{|\lambda
_{k,h}|^{r}}{(1+2^{k}|2^{-k}h-2^{-k}m|)^{d}}\Big)^{1/r}
\end{equation*}%
and $\lambda _{r,d}^{\ast }:=\{\lambda _{k,m,r,d}^{\ast }\}_{k\in \mathbb{N}%
_{0},m\in \mathbb{Z}^{n}}\subset \mathbb{C}$ with the usual modification if $%
r=\infty $.

Similarly to the classical  Triebel--Lizorkin spaces (see \cite{FJ90}), we obtain the following estimate.

\begin{lemma}
	\label{lamda-equi copy1}
	Let $s\in \mathbb{R}$, $0<\vec p,\vec q< \infty$,
	$\vec\alpha=(\alpha_1,\ldots,\alpha_n)\in\mathbb{R}^n$,
	$0<\beta\le\infty$, and $i\in\{1,\ldots,n\}$. Assume that
	\[
	0<a<\min\Bigl(1,p_{-},q_{-},\beta,
	\frac{1}{\alpha_i+\frac1{p_i}}\Bigr),
	\]
	and define
	\[
	r=\min(1,p_{-},q_{-},\beta).
	\]
	If $d>\frac{nr}{a}$, then
	\[
	\lambda_{k,m,r,d}^{*}\,\chi_{k,m}
	\lesssim
	\mathcal{M}_{a}
	\Bigl(
	\sum_{h\in\mathbb Z^{n}}
	\lambda_{k,h}\chi_{k,h}
	\Bigr),
	\qquad
	k\in\mathbb N_{0},\; m\in\mathbb Z^{n}.
	\]
\end{lemma}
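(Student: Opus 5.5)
This is the Frazier--Jawerth pointwise estimate (FJ90, Lemma A.2), adapted to the iterated maximal operator $\mathcal{M}_a$. The plan is to decompose $\mathbb{Z}^n$ into dyadic shells around $m$ and bound each shell's contribution by a lattice average of $|\lambda_{k,h}|^a$. That average we then dominate by $\mathcal{M}_a$ evaluated at any $x\in Q_{k,m}$. Write $g=\sum_h \lambda_{k,h}\chi_{k,h}$, so $|g|^a=\sum_h|\lambda_{k,h}|^a\chi_{k,h}$ since the cubes are disjoint. The factor in the denominator is $(1+|h-m|)^d$.

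\emph{Step 1 (shell decomposition).} Set $\Omega_0=\{h:|h-m|\le1\}$ and $\Omega_j=\{h:2^{j-1}<|h-m|\le 2^j\}$ for $j\ge1$. Then
\[
(\lambda^*_{k,m,r,d})^r\lesssim\sum_{j\ge0}2^{-jd}\sum_{h\in\Omega_j}|\lambda_{k,h}|^r .
\]
Since $a<r$ fails in general (we only know $a<r$ up to the constraint; in fact $a< \min(1,p_-,q_-,\beta)=r$), we use the embedding $\ell^a\hookrightarrow\ell^r$, valid because $a/r\le1$:
\[
\sum_{h\in\Omega_j}|\lambda_{k,h}|^r\le\Big(\sum_{h\in\Omega_j}|\lambda_{k,h}|^a\Big)^{r/a}.
\]

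\emph{Step 2 (comparison with the maximal function).} Fix $x\in Q_{k,m}$ and let $P_j$ be the box centred near $x$ with side $c\,2^{j-k}$ in each coordinate, which contains $\bigcup_{h\in\Omega_j}Q_{k,h}$. Then
\[
\sum_{h\in\Omega_j}|\lambda_{k,h}|^a=2^{kn}\int_{\cup_{\Omega_j}Q_{k,h}}|g|^a
\le 2^{jn}\frac{1}{|P_j|}\int_{P_j}|g|^a\,dy\lesssim 2^{jn}M_n\cdots M_1(|g|^a)(x).
\]
The last inequality is the key point for the iterated operator. Because $P_j=I_1\times\cdots\times I_n$ is a product of intervals each containing $x_i$, Fubini gives
\[
\frac1{|P_j|}\int_{P_j}|g|^a=\frac{1}{|I_n|}\int_{I_n}\cdots\frac{1}{|I_1|}\int_{I_1}|g|^a\,dy_1\cdots dy_n .
\]
Bounding the innermost average by $M_1(|g|^a)(y_2,\dots,y_n)$ evaluated at $x_1$, then the next by $M_2$ at $x_2$, and so on, the average is at most $2^n\,M_n\cdots M_1(|g|^a)(x)=2^n(\mathcal{M}_a g(x))^a$. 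Here the one-dimensional maximal function over centred intervals dominates averages over intervals containing the point, up to a factor $2$.

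\emph{Step 3 (summation).} Combining the two steps,
\[
(\lambda^*_{k,m,r,d})^r\lesssim\sum_{j\ge0}2^{-jd}2^{jnr/a}(\mathcal{M}_a g(x))^r,
\]
and the geometric series converges exactly because $d>nr/a$. Taking $r$-th roots yields the claim for every $x\in Q_{k,m}$, i.e.\ after multiplying by $\chi_{k,m}$. The case $\beta=\infty$ (and $r$ possibly $1$) is handled identically, and if $r=\infty$ were allowed one uses a supremum instead of the sum. The hypothesis $a<1/(\alpha_i+1/p_i)$ is not needed for this pointwise bound; it only matters later when Lemma \ref{Maximal-Inq} is applied.

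The main obstacle is Step 2: one must verify that averages over non-cubic boxes adapted to the shells are controlled by the iterated one-dimensional maximal operators with the correct order of integration. The Fubini factorisation above should settle this.
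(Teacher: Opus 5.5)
Your proof is correct and follows the approach the paper intends: the paper gives no details beyond pointing to the Frazier--Jawerth estimate, and your argument supplies them. It uses the shell decomposition, the $\ell^a\hookrightarrow\ell^r$ step (valid because $a<r$; the garbled sentence before it should simply say so), and the Fubini reduction of box averages to $M_n\cdots M_1(|g|^a)(x)$, which is the natural adaptation to the iterated operator $\mathcal{M}_a$. You are also right that the hypothesis $a<1/(\alpha_i+1/p_i)$ plays no role in this pointwise bound.
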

Combining Lemma \ref{lamda-equi copy1} with Lemma \ref{Maximal-Inq}, and arguing as in the classical theory of Triebel-Lizorkin spaces (see \cite{FJ90}), we obtain the following result.

\begin{lemma}
	\label{lamda-equi}
	Let $s\in \mathbb{R}$, $0<\vec{p},\vec{q}<\infty$,
	$\vec{\alpha}=(\alpha_{1},\ldots,\alpha_{n})\in\mathbb{R}^{n}$,
	$0<\beta\leq\infty$, and
	$\alpha_i>-\frac1{p_i}$ for all $i\in\{1,\ldots,n\}$.
	Assume that
	\[
	0<a<
	\min\Bigl(1,p_{-},q_{-},\beta,
	\frac{1}{\alpha_i+\frac1{p_i}}\Bigr),
	\]
	and let
	\[
	r= \min(1,p_{-},q_{-},\beta),
\]
	If $d>\frac{nr}{a}$, then
	\[
	\bigl\|\lambda_{r,d}^{*}\bigr\|_
	{\dot{E}_{\vec p}^{\vec\alpha,\vec q}f_{\beta}^{s}}
	\approx
	\bigl\|\lambda\bigr\|_
	{\dot{E}_{\vec p}^{\vec\alpha,\vec q}f_{\beta}^{s}}.
	\]
\end{lemma}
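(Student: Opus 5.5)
The plan is to prove the two inequalities separately; the lower bound is trivial and the upper bound comes from Lemma~\ref{lamda-equi copy1} combined with Lemma~\ref{Maximal-Inq}.

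\textbf{Lower bound.} Take the term $h=m$ in the definition of $\lambda^{*}_{k,m,r,d}$. Its weight is $(1+0)^{-d}=1$, so
\[
|\lambda_{k,m}|\le\lambda^{*}_{k,m,r,d}
\]
pointwise in $(k,m)$. This gives $\|\lambda\|_{\dot E^{\vec\alpha,\vec q}_{\vec p}f^s_\beta}\le\|\lambda^*_{r,d}\|_{\dot E^{\vec\alpha,\vec q}_{\vec p}f^s_\beta}$, because the Herz quasi-norm is a lattice quasi-norm: it is monotone in $|f|$ in each iterated $\dot K$ norm.

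\textbf{Upper bound.} For fixed $k$ the cubes $Q_{k,m}$, $m\in\mathbb Z^n$, are pairwise disjoint, so
\[
\sum_m\lambda^*_{k,m,r,d}\chi_{k,m}\le\sup_m\lambda^*_{k,m,r,d}\chi_{k,m}.
\]
Lemma~\ref{lamda-equi copy1} then gives, pointwise,
\[
\sum_{m}2^{k(s+\frac n2)}\lambda^{*}_{k,m,r,d}\chi_{k,m}\lesssim \mathcal M_a\Big(\sum_h 2^{k(s+\frac n2)}|\lambda_{k,h}|\chi_{k,h}\Big)=:\mathcal M_a(g_k).
\]
Here I use that $\mathcal M_a$ is positively homogeneous, and I replace $\lambda_{k,h}$ by $|\lambda_{k,h}|$; this is harmless since $\lambda^*$ only involves $|\lambda_{k,h}|$. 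Taking $\ell^\beta$ norms in $k$ (again using disjointness in $m$, so that $\big(\sum_m|\cdot|\chi_{k,m}\big)^\beta=\sum_m|\cdot|^\beta\chi_{k,m}$) yields
\[
\|\lambda^*_{r,d}\|_{\dot E^{\vec\alpha,\vec q}_{\vec p}f^s_\beta}\lesssim\Big\|\Big(\sum_{k\ge0}(\mathcal M_a g_k)^\beta\Big)^{1/\beta}\Big\|_{\dot E^{\vec\alpha,\vec q}_{\vec p}}.
\]

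\textbf{Applying the vector-valued maximal inequality.} Lemma~\ref{Maximal-Inq} applies with $t=a$ once its hypotheses are checked:
\begin{itemize}
\item $a<\min(\beta,p_-,q_-)$ holds by assumption.
\item The condition $-\frac1{p_i}<\alpha_i<1-\frac1{p_i}$ is where I must be careful. The lower bound is assumed. For the upper bound, note that $\mathcal M_a(f)=(\mathcal M_1|f|^a)^{1/a}$, so we pass to the rescaled space with exponents $p_i/a$, $q_i/a$ and $a\alpha_i$. The required condition becomes $a\alpha_i<1-\frac{a}{p_i}$, i.e.\ $a(\alpha_i+\frac1{p_i})<1$, which is exactly the hypothesis $a<1/(\alpha_i+\frac1{p_i})$.
\end{itemize}
So I would apply the maximal inequality to $|g_k|^a$ in $\dot E^{a\vec\alpha,\vec q/a}_{\vec p/a}$ with $\beta/a$ in place of $\beta$ and $t=1$ (the condition $1<\min(\beta/a,p_-/a,q_-/a)$ holds), using the identity $\||f|^a\|_{\dot E^{a\vec\alpha,\vec q/a}_{\vec p/a}}=\|f\|^a_{\dot E^{\vec\alpha,\vec q}_{\vec p}}$. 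I expect this rescaling to be the main obstacle, since the lemma as quoted asks for $\alpha_i<1-1/p_i$ rather than the condition actually assumed; I would verify that the statement of Lemma~\ref{Maximal-Inq} with general $t$ is indeed obtained this way from the $t=1$ case in \cite{ZYZ22}.

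\textbf{Conclusion.} After the maximal inequality we have
\[
\Big\|\Big(\sum_k g_k^\beta\Big)^{1/\beta}\Big\|_{\dot E^{\vec\alpha,\vec q}_{\vec p}}=\|\lambda\|_{\dot E^{\vec\alpha,\vec q}_{\vec p}f^s_\beta},
\]
again by disjointness of the cubes, which completes the upper bound. The case $\beta=\infty$ is handled identically, with suprema in place of the sums.
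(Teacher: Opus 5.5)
Your proposal is correct and follows the route the paper indicates. The paper gives only a one-sentence justification: the trivial bound $|\lambda_{k,m}|\le\lambda^{*}_{k,m,r,d}$ for one direction, and the pointwise estimate of Lemma~\ref{lamda-equi copy1} followed by Lemma~\ref{Maximal-Inq} for the other. Your rescaling step is a worthwhile addition, not an obstacle. It uses Lemma~\ref{Maximal-Inq} only in its $t=1$ instance, with exponents $\vec p/a,\vec q/a,\beta/a$ and weight $a\vec\alpha$, exactly as stated, so nothing further needs checking in \cite{ZYZ22}. It is also precisely what makes the hypothesis $a<1/(\alpha_i+\frac1{p_i})$, rather than $\alpha_i<1-\frac1{p_i}$, sufficient.
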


We are now in a position to state the following theorem, known as the
$\varphi$-transform characterization in the sense of Frazier and Jawerth.
Its proof follows by a straightforward adaptation of
\cite[Theorem 2.2]{FJ90}, together with Lemma \ref{lamda-equi}.

\begin{theorem}
	\label{phi-tran}
	Let $s\in \mathbb{R}$, $0<\vec{p},\vec{q}<\infty$,
	$\vec{\alpha}=(\alpha_{1},\ldots,\alpha_{n})\in\mathbb{R}^{n}$,
	$0<\beta\leq\infty$, and
	$\alpha_i>-\frac1{p_i}$ for all $i\in\{1,\ldots,n\}$.
	Let $\Phi,\Psi\in\mathcal{S}(\mathbb{R}^{n})$ satisfy \eqref{Ass1},
	and let $\varphi,\psi\in\mathcal{S}(\mathbb{R}^{n})$ satisfy
	\eqref{Ass2} such that \eqref{Ass3} holds.
	
	Then the operators
	\[
	S_{\varphi}:
	\dot{E}_{\vec p}^{\vec\alpha,\vec q}F_{\beta}^{s}
	\longrightarrow
	\dot{E}_{\vec p}^{\vec\alpha,\vec q}f_{\beta}^{s}
	\]
	and
	\[
	T_{\psi}:
	\dot{E}_{\vec p}^{\vec\alpha,\vec q}f_{\beta}^{s}
	\longrightarrow
	\dot{E}_{\vec p}^{\vec\alpha,\vec q}F_{\beta}^{s}
	\]
	are bounded. Moreover,
	\[
	T_{\psi}\circ S_{\varphi}
	=
	I
	\]
	on $\dot{E}_{\vec p}^{\vec\alpha,\vec q}F_{\beta}^{s}$.
\end{theorem}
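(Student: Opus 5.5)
The plan is to adapt \cite[Theorem 2.2]{FJ90} in three steps: boundedness of $S_{\varphi}$, boundedness of $T_{\psi}$, and the identity $T_{\psi}\circ S_{\varphi}=I$. The only properties of the underlying quasi-norm that enter are the vector-valued maximal inequality of Lemma \ref{Maximal-Inq} (through Lemma \ref{lamda-equi}) and the lattice property of $\dot E_{\vec p}^{\vec\alpha,\vec q}$. Throughout, fix $a$ and $r$ as in Lemma \ref{lamda-equi}, and take $d>nr/a$.

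\emph{Boundedness of $S_{\varphi}$.} Write $(S_\varphi f)_{k,m}=2^{-kn/2}\widetilde\varphi_k*f(2^{-k}m)$, with the obvious modification for $k=0$. For $x\in Q_{k,m}$ set
\[
\sup_{y\in Q_{k,m}}|\widetilde\varphi_k*f(y)|=|\widetilde\varphi_k*f(y_{k,m})|,
\]
taking a near-extremal point if the supremum is not attained. Define the auxiliary sequence $b_{k,h}=2^{-kn/2}\inf_{y\in \widetilde Q_{k,h}}|\widetilde\varphi_k*f(y)|$, where the $\widetilde Q_{k,h}$ are the subcubes of side $2^{-k-\gamma}$ containing $y_{k,m}$. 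Then the Frazier--Jawerth estimate \cite[(2.11)--(2.13)]{FJ90}, which rests on the Plancherel--Polya/Taylor-type inequality for band-limited functions, gives pointwise
\[
|(S_\varphi f)_{k,m}|\lesssim \big(b^{*}_{r,d}\big)_{k+\gamma,h}
\]
for suitable $\gamma$ and all $h$ with $Q_{k+\gamma,h}\subset Q_{k,m}$. Applying Lemma \ref{lamda-equi} with level shift $\gamma$ (the shift costs only a constant $2^{\gamma(s+n/2)}$), and then using the monotonicity of $\dot E_{\vec p}^{\vec\alpha,\vec q}$ together with $b_{k,h}\chi_{k+\gamma,h}\le 2^{-kn/2}|\widetilde\varphi_k*f|\chi_{k+\gamma,h}$, yields
\[
\|S_\varphi f\|_{\dot E f}\lesssim \|f\|_{\dot E F}.
\]
One needs that the $\dot E F$ quasi-norm computed with $\widetilde\varphi$ is comparable to the one computed with $\varphi$. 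I would either note that $\widetilde\varphi$ also satisfies \eqref{Ass2} and prove independence of the choice of $\varphi$ first (via the same maximal argument), or simply work with $\widetilde\varphi$ throughout.

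\emph{Boundedness of $T_\psi$.} By Lemma \ref{Inv-phi-trans}, $T_\psi\lambda\in\mathcal S'$. By the support conditions, $\varphi_j*\psi_{k,m}=0$ unless $|j-k|\le1$. Moreover, for $|j-k|\le 1$,
\[
|\varphi_j*\psi_{k,m}(x)|\lesssim 2^{kn/2}(1+2^k|x-2^{-k}m|)^{-L}
\]
for all $L$. Hence, for $x\in Q_{k,h}$,
\[
|\varphi_j*T_\psi\lambda(x)|\lesssim 2^{kn/2}\sum_{|k-j|\le1}\sum_m|\lambda_{k,m}|(1+|h-m|)^{-L}\le 2^{kn/2}\sum_{|k-j|\le1}\lambda^*_{k,h,r,d},
\]
where the last step uses $\ell^r\hookrightarrow\ell^1$ when $r\le 1$, after choosing $L\ge d/r$. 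Summing over $j$ with weights $2^{js\beta}$, the quasi-triangle inequality gives $\|T_\psi\lambda\|_{\dot EF}\lesssim\|\lambda^*_{r,d}\|_{\dot Ef}$, and Lemma \ref{lamda-equi} then bounds this by $\|\lambda\|_{\dot E f}$.

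\emph{Identity and main obstacle.} The identity $T_\psi\circ S_\varphi=I$ is exactly the Calder\'on formula \eqref{proc2} of Lemma \ref{DW-lemma1}, with convergence in $\mathcal S'$; the rearrangement is justified by Lemma \ref{Inv-phi-trans}. The main obstacle is the $S_\varphi$ step. There, all the mixed-norm Herz structure is hidden in Lemma \ref{lamda-equi}, and the difficulty is to check that the discretization argument of \cite{FJ90} passes through the level shift $\gamma$ and the sup/inf replacement. This only requires the lattice property and the dilation-independence of Lemma \ref{lamda-equi}, and I would verify both carefully.
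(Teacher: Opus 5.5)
Your proposal is correct and takes the same route as the paper. The paper proves Theorem \ref{phi-tran} only by pointing to a straightforward adaptation of \cite[Theorem 2.2]{FJ90} combined with Lemma \ref{lamda-equi}, which is exactly your plan: the sup/inf comparison for $S_\varphi$, the almost-orthogonality estimate via $\lambda^{*}_{r,d}$ for $T_\psi$, and the Calder\'on formula \eqref{proc2} for $T_\psi\circ S_\varphi=I$. Two minor tidy-ups: $b_{k,h}$ should be defined on every cube $Q_{k+\gamma,h}$, not only on those containing $y_{k,m}$; and the $\widetilde\varphi$-versus-$\varphi$ issue closes by running the same argument for the pair $(\widetilde\Phi,\widetilde\varphi)$ and using $T\circ S=I$.
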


\begin{remark}
	Theorem \ref{phi-tran} provides a powerful tool for studying the spaces
	$\dot{E}_{\vec p}^{\vec\alpha,\vec q}F_{\beta}^{s}$.
	Indeed, many problems can be transferred from the function space to the
	corresponding sequence space, where they are often easier to handle.
	
	More precisely, under the assumptions of Theorem \ref{phi-tran}, one has
	\[
	\bigl\|
	\{\langle f,\varphi_{k,m}\rangle\}_{k\in\mathbb N_{0},\,m\in\mathbb Z^{n}}
	\bigr\|_
	{\dot{E}_{\vec p}^{\vec\alpha,\vec q}f_{\beta}^{s}}
	\approx
	\|f\|_
	{\dot{E}_{\vec p}^{\vec\alpha,\vec q}F_{\beta}^{s}}.
	\]
\end{remark}
Following the arguments in \cite{Dr-AOT23}, we obtain the following result.

\begin{lemma}
	Let $s\in \mathbb{R}$, $0<\vec{p},\vec{q}< \infty$, $\vec{\alpha}=(\alpha
	_{1},\ldots,\alpha _{n})\in \mathbb{R}^{n}$, $0<\beta \leq \infty$, and
	$\alpha _{i}>-\frac{1}{p_{i}}$ for all $i\in \{1,\ldots,n\}$. Then the spaces
	$\dot{E}_{\vec{p}}^{\vec{\alpha},\vec{q}}f_{\beta }^{s}$ are quasi-Banach
	spaces. Moreover, they are Banach spaces whenever 	$1\leq \vec{p},\vec{q}< \infty$ and 
	$1\leq \beta \leq \infty$.
\end{lemma}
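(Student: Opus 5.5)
The plan is the standard two-step completeness argument: first show that convergence in the sequence-space quasi-norm forces coordinatewise convergence, then run a Fatou-type argument on the underlying mixed-norm Herz space. Throughout set $d=\min\{1,p_-,q_-,\beta\}$. The quasi-triangle inequality for $\dot{E}_{\vec{p}}^{\vec{\alpha},\vec{q}}f_{\beta }^{s}$ follows as for the function space. Indeed, $\|\cdot\|_{\dot{K}_{p_i}^{\alpha_i,q_i}}$ is a $\min(1,p_i,q_i)$-norm, obtained by combining Lemma \ref{Lp-estimate} with the $\ell^{q_i}$ triangle inequality, and iterating this in $i$ shows that $\|\cdot\|^{d}_{\dot{E}_{\vec{p}}^{\vec{\alpha},\vec{q}}}$ is subadditive. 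Since $(\sum_{v,m}\cdots)^{1/\beta}$ is a $\min(1,\beta)$-norm pointwise, $\|\cdot\|^{d}_{\dot{E}_{\vec{p}}^{\vec{\alpha},\vec{q}}f_{\beta }^{s}}$ is subadditive as well. Hence completeness can be checked with $d$-powers. When $1\le \vec p,\vec q$ and $\beta\ge1$ we have $d=1$, so the quasi-norm is a genuine norm and the Banach statement follows automatically from completeness.

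\emph{Step 1 (coefficient control).} By the estimate \eqref{main-est} from the proof of Lemma \ref{Inv-phi-trans}, in its dyadic form obtained there for $I_2$, we have for every $(k,m)$
\[
|\lambda_{k,m}|\le C_{k,m}\,\|\lambda\|_{\dot{E}_{\vec{p}}^{\vec{\alpha},\vec{q}}f_{\beta }^{s}},
\]
with $C_{k,m}<\infty$ independent of $\lambda$. This is exactly where the hypothesis $\alpha_i>-1/p_i$ enters: it allows the choice of $r_i<1/(\alpha_i+1/p_i)$, so that $1/t_i-\alpha_i>0$. Consequently, a Cauchy sequence $\{\lambda^{(j)}\}_j$ in $\dot{E}_{\vec{p}}^{\vec{\alpha},\vec{q}}f_{\beta }^{s}$ is Cauchy in each coordinate, and we may define $\lambda_{k,m}=\lim_j\lambda^{(j)}_{k,m}$.

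\emph{Step 2 (Fatou property and convergence).} I would next prove a Fatou lemma for $\dot{E}_{\vec{p}}^{\vec{\alpha},\vec{q}}$: if $0\le g_j$ and $g_j\to g$ a.e., then $\|g\|\le\liminf_j\|g_j\|$. The proof is an induction on the coordinates. At level $i$, the classical Fatou lemma in $L^{p_i}(\mathbb R)$ gives the inequality for each annulus $R_{k_i}$. Fatou for sums over $k_i$ then passes it to $\dot{K}_{p_i}^{\alpha_i,q_i}$, and the resulting pointwise $\liminf$ inequality is fed into the next variable, using monotonicity of the outer quasi-norms. With this in hand, fix $j$ and apply the lemma as $l\to\infty$ to the nonnegative functions
\[
g_{l}=\Big(\sum_{v,m}2^{v(s+n/2)\beta}|\lambda^{(l)}_{v,m}-\lambda^{(j)}_{v,m}|^{\beta}\chi_{v,m}\Big)^{1/\beta}.
\]
These converge pointwise to the corresponding expression for $\lambda-\lambda^{(j)}$, by Fatou for the inner series (or a $\liminf$ of suprema when $\beta=\infty$). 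This yields $\|\lambda-\lambda^{(j)}\|\le\liminf_l\|\lambda^{(l)}-\lambda^{(j)}\|$, which tends to $0$ as $j\to\infty$. Finally, $\|\lambda\|^d\le\|\lambda-\lambda^{(j)}\|^d+\|\lambda^{(j)}\|^d<\infty$, so $\lambda$ belongs to the space and is the limit of the sequence.

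The only point requiring care is the iterated Fatou step. The $\liminf$ must be carried through nested quasi-norms, so one should prove the stronger pointwise statement at each level, namely that the partial quasi-norm in $(x_1,\dots,x_i)$ of $g$ is at most the $\liminf$ of those of $g_j$ for a.e. $(x_{i+1},\dots,x_n)$, before applying the next $\dot{K}$ norm. Measurability of the partial norms follows from Tonelli, and the rest is routine.
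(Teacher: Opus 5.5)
Your proposal is correct and follows the standard completeness argument that the paper only invokes by reference to \cite{Dr-AOT23}: bound each $|\lambda_{k,m}|$ by the quasi-norm, define the limit coefficientwise, and conclude with a Fatou argument through the iterated Herz quasi-norms. One small correction: the functions $g_l$ need not converge pointwise, since Fatou for the inner series only gives $g\le\liminf_l g_l$, so state your Herz Fatou lemma as $\|\liminf_l g_l\|_{\dot E_{\vec p}^{\vec\alpha,\vec q}(\mathbb R^n)}\le\liminf_l\|g_l\|_{\dot E_{\vec p}^{\vec\alpha,\vec q}(\mathbb R^n)}$, which is exactly what your coordinate-by-coordinate induction proves.
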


Combining this lemma with Theorem \ref{phi-tran} and the results of
\cite{GJN17}, we obtain the following useful property of these function
spaces.

\begin{theorem}
	Let $s\in \mathbb{R}$, $0<\vec{p},\vec{q}< \infty$, $\vec{\alpha}=(\alpha
	_{1},\ldots,\alpha _{n})\in \mathbb{R}^{n}$, $0<\beta \leq \infty$, and
	$\alpha _{i}>-\frac{1}{p_{i}}$ for all $i\in \{1,\ldots,n\}$. Then the spaces
	$\dot{E}_{\vec{p}}^{\vec{\alpha},\vec{q}}F_{\beta }^{s}$ are quasi-Banach
	spaces. Moreover, they are Banach spaces whenever
		$1\leq \vec{p},\vec{q}< \infty$ and 
	$1\leq \beta \leq \infty$.
\end{theorem}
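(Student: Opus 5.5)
The plan is to transfer completeness from the sequence space to the function space through the $\varphi$-transform, and to handle the Banach case by checking the triangle inequality directly. First we record that $\|\cdot\|_{\dot{E}_{\vec p}^{\vec\alpha,\vec q}F_{\beta}^{s}}$ is a quasi-norm. Homogeneity is clear. The quasi-triangle inequality follows from the $d$-triangle inequality with $d=\min\{1,p_-,q_-,\beta\}$ noted after Definition \ref{sequence-space}. For this we use that $\ell^\beta$, $L^{p_i}$ and the $\ell^{q_i}$-sums defining each $\dot K^{\alpha_i,q_i}_{p_i}$ are all $d$-normed, so the iterated norm $\dot E^{\vec\alpha,\vec q}_{\vec p}$ is $d$-normed as well. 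Definiteness reduces to the classical fact that $\varphi_k*f=0$ for all $k$ (with $\Phi$ at $k=0$) forces $f=0$, via \eqref{Ass3}. When $1\le \vec p,\vec q<\infty$ and $1\le\beta\le\infty$ we have $d=1$, so the quasi-norm is a genuine norm.

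For completeness, let $\{f_j\}$ be Cauchy in $\dot{E}_{\vec p}^{\vec\alpha,\vec q}F_{\beta}^{s}$. By Theorem \ref{phi-tran}, $S_\varphi$ is bounded and linear, so $\{S_\varphi f_j\}$ is Cauchy in $\dot{E}_{\vec p}^{\vec\alpha,\vec q}f_{\beta}^{s}$. By the preceding lemma this sequence converges to some $\lambda\in\dot{E}_{\vec p}^{\vec\alpha,\vec q}f_{\beta}^{s}$. Set $f=T_\psi\lambda$. By Lemma \ref{Inv-phi-trans} this is a well-defined element of $\mathcal S'(\mathbb R^n)$, and by Theorem \ref{phi-tran} it lies in $\dot{E}_{\vec p}^{\vec\alpha,\vec q}F_{\beta}^{s}$. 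Using $T_\psi\circ S_\varphi=I$ together with the boundedness of $T_\psi$, we get
\[
\|f_j-f\|_{\dot{E}_{\vec p}^{\vec\alpha,\vec q}F_{\beta}^{s}}
=\|T_\psi(S_\varphi f_j-\lambda)\|_{\dot{E}_{\vec p}^{\vec\alpha,\vec q}F_{\beta}^{s}}
\lesssim \|S_\varphi f_j-\lambda\|_{\dot{E}_{\vec p}^{\vec\alpha,\vec q}f_{\beta}^{s}}\to 0 .
\]
This gives completeness, and hence the quasi-Banach property. Combined with the norm property from the first step, it gives the Banach property in the stated range.

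The only delicate point is that Theorem \ref{phi-tran} must be applied with a fixed admissible quadruple $\Phi,\Psi,\varphi,\psi$ satisfying \eqref{Ass3}, and this $\varphi$ must be the one used in Definition \ref{B-F-def}. Existence of $\Psi,\psi$ for the given $\Phi,\varphi$ is guaranteed by \cite[Section 12]{FJ90}, so no independence-of-the-system result is needed. The results of \cite{GJN17} would only be invoked if one wanted to pass to another system.
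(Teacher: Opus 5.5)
Your proposal is correct and is essentially the paper's argument. The paper also derives the result by combining completeness of the sequence space $\dot{E}_{\vec p}^{\vec\alpha,\vec q}f_{\beta}^{s}$ with the $\varphi$-transform characterization of Theorem \ref{phi-tran}, so that completeness transfers through $S_\varphi$, $T_\psi$ and $T_\psi\circ S_\varphi=I$ exactly as in your Cauchy-sequence computation, and the Banach case comes from the triangle inequality when $d=1$. The paper additionally cites \cite{GJN17}, which your fixed-quadruple argument does not need.

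One minor slip: definiteness does not follow from \eqref{Ass3} literally, since that identity reproduces $f$ from $\widetilde{\varphi}_k*f$ rather than from $\varphi_k*f$. It does follow at once from $f=T_\psi S_\varphi f$ together with the boundedness of $S_\varphi$. Alternatively, the open sets where $\mathcal{F}\Phi\neq 0$ and $\mathcal{F}\varphi(2^{-k}\cdot)\neq 0$ cover $\mathbb{R}^n$, which also gives it.
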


Let $\vartheta$ be a function in $\mathcal{S}(\mathbb{R}^{n})$ satisfying
\begin{equation}
\vartheta(x)=1 \quad \text{for } |x|\leq 1,
\qquad
\vartheta(x)=0 \quad \text{for } |x|\geq \frac{3}{2}.
\label{function-v}
\end{equation}
Define
\[
\psi_{0}(x)=\vartheta(x), \qquad
\psi_{1}(x)=\vartheta\!\left(\frac{x}{2}\right)-\vartheta(x),
\]
and
\[
\psi_{k}(x)=\psi_{1}(2^{-k+1}x), \qquad k=2,3,\ldots .
\]
Then
\[
\operatorname{supp}\psi_{k}
\subset
\bigl\{x\in\mathbb{R}^{n}:2^{k-1}\leq |x|
\leq 3\cdot 2^{k-1}\bigr\},
\]
and
\begin{equation}
\sum_{k=0}^{\infty}\psi_{k}(x)=1,
\qquad x\in\mathbb{R}^{n}.
\label{partition}
\end{equation}
The family $\{\psi_{k}\}_{k\in\mathbb{N}_{0}}$ is called a smooth dyadic
resolution of unity. Consequently, every
$f\in\mathcal{S}'(\mathbb{R}^{n})$ admits the Littlewood--Paley
decomposition
\[
f=\sum_{k=0}^{\infty}\mathcal{F}^{-1}\psi_{k}*f,
\]
where the series converges in $\mathcal{S}'(\mathbb{R}^{n})$.

We define

\[
\|f\|_{\dot{E}_{\vec{p}}^{\vec{\alpha},\vec{q}}F_{\beta}^{s}}^{\psi_{0},\psi_{1}}
=
\Bigg\|
\Bigg(
\sum_{k=0}^{\infty}
2^{ks\beta}
\bigl|
\mathcal{F}^{-1}\psi_{k}*f
\bigr|^{\beta}
\Bigg)^{1/\beta}
\Bigg\|_{\dot{E}_{\vec{p}}^{\vec{\alpha},\vec{q}}(\mathbb{R}^{n})}.
\]

The following theorem is proved in the same spirit as the corresponding
result for classical  Triebel--Lizorkin spaces, with the aid of
Lemma \ref{Maximal-Inq}.

\begin{theorem}
	\label{partition-equi}
	Let $s\in \mathbb{R}$, $0<\vec{p},\vec{q}< \infty$,
	$\vec{\alpha}=(\alpha _{1},\ldots,\alpha _{n})\in \mathbb{R}^{n}$,
	$0<\beta \leq \infty$, and
	$\alpha _{i}>-\frac{1}{p_{i}}$ for all $i\in \{1,\ldots,n\}$.
	Then a tempered distribution
	$f$ belongs to
	$\dot{E}_{\vec{p}}^{\vec{\alpha},\vec{q}}F_{\beta}^{s}$
	if and only if
	\[
	\|f\|_{\dot{E}_{\vec{p}}^{\vec{\alpha},\vec{q}}F_{\beta}^{s}}^{\psi_{0},\psi_{1}}
	<\infty.
	\]
	Furthermore, the quasi-norms
	\[
	\|f\|_{\dot{E}_{\vec{p}}^{\vec{\alpha},\vec{q}}F_{\beta}^{s}}
	\quad\text{and}\quad
	\|f\|_{\dot{E}_{\vec{p}}^{\vec{\alpha},\vec{q}}F_{\beta}^{s}}^{\psi_{0},\psi_{1}}
	\]
	are equivalent.
\end{theorem}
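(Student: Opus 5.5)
The plan is to follow the classical argument for independence of the Littlewood--Paley system in Triebel--Lizorkin spaces. The two ingredients are a Peetre-type maximal function estimate and Lemma \ref{Maximal-Inq}. By symmetry, it suffices to prove one inequality for an arbitrary pair of admissible systems. So let $\{\phi_k\}$ be as in Definition \ref{B-F-def} and $\{\mathcal F^{-1}\psi_k\}$ the smooth dyadic resolution. It is enough to show
\[
\Big\|\Big(\sum_{k=0}^\infty 2^{ks\beta}|\mathcal F^{-1}\psi_k*f|^\beta\Big)^{1/\beta}\Big\|_{\dot E_{\vec p}^{\vec\alpha,\vec q}}
\lesssim \|f\|_{\dot E_{\vec p}^{\vec\alpha,\vec q}F^s_\beta}.
\]
The reverse inequality is obtained the same way with the roles interchanged. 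The only property used of the right-hand system is that its Fourier transforms are nonvanishing on the corresponding annuli (respectively the ball). This yields a Tauberian-type reproducing identity with Schwartz functions $\Lambda,\lambda$ supported in the appropriate Fourier regions.

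\textbf{Step 1 (localization in frequency).} By \eqref{Ass1}--\eqref{Ass2}, the supports of $\mathcal F\phi_j$ cover $\mathbb R^n$. Hence there are $\Lambda,\lambda\in\mathcal S$ with $\mathcal F\Lambda\,\mathcal F\Phi+\sum_{j\ge1}\mathcal F\lambda(2^{-j}\cdot)\mathcal F\phi(2^{-j}\cdot)=1$. Since $\operatorname{supp}\psi_k$ meets only the supports of $\mathcal F\phi_j$ with $|j-k|\le 2$, we get
\[
\mathcal F^{-1}\psi_k*f=\sum_{|j-k|\le 2}\mathcal F^{-1}\psi_k*\lambda_j*\phi_j*f .
\]

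\textbf{Step 2 (pointwise majorization).} Fix $0<t<\min(\beta,p_-,q_-)$ and $N>n/t$. I would use the Peetre maximal function $\phi_j^{*,N}f(x)=\sup_{y}|\phi_j*f(x-y)|(1+2^j|y|)^{-N}$. Since the kernels $\mathcal F^{-1}\psi_k*\lambda_j$ with $|j-k|\le2$ are uniformly $2^{jn}$-dilates of Schwartz functions, this gives $|\mathcal F^{-1}\psi_k*f|\lesssim\sum_{|j-k|\le2}\phi_j^{*,N}f$. The key point is the Peetre inequality
\[
\phi_j^{*,N}f(x)\lesssim \mathcal M_t(\phi_j*f)(x).
\]
This is the step requiring care, since $\mathcal M_t$ is the \emph{iterated} one-dimensional maximal operator rather than $M$ itself. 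I would prove it as follows. The function $\phi_j*f$ has Fourier support in a ball of radius $\sim2^j$, so the standard subharmonicity/mean-value estimate gives
\[
|\phi_j*f(x-y)|^t\lesssim 2^{jn}\int \frac{|\phi_j*f(z)|^t}{(1+2^j|x-y-z|)^{Nt}}\,dz\,(\text{plus an absorbable term}).
\]
Then $(1+2^j|w|)^{-Nt}\le\prod_i(1+2^j|w_i|)^{-Nt/n}$, with $Nt/n>1$. Each one-dimensional kernel is integrable, and convolution with it is dominated by $M_i$. This produces $M_n\cdots M_1(|\phi_j*f|^t)(x)(1+2^j|y|)^{Nt}$, which yields the claimed inequality. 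The usual a priori finiteness of $\phi_j^{*,N}f$ for tempered $f$ is handled as in the classical proof, and is the main technical obstacle.

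\textbf{Step 3 (conclusion).} Combining Steps 1 and 2, we get $2^{ks}|\mathcal F^{-1}\psi_k*f|\lesssim\sum_{|l|\le2}\mathcal M_t(2^{(k+l)s}\phi_{k+l}*f)$. Taking the $\ell^\beta$ norm in $k$ (shifting indices, with a constant depending on $s$), then the $\dot E_{\vec p}^{\vec\alpha,\vec q}$ quasi-norm, and applying Lemma \ref{Maximal-Inq} yields the desired estimate. Lemma \ref{Maximal-Inq} requires $-\frac1{p_i}<\alpha_i<1-\frac1{p_i}$. Since only $\alpha_i>-\frac1{p_i}$ is assumed, I would first replace $|\cdot|^t$ by $|\cdot|^{t'}$ with $t'$ so small that $\alpha_i<\frac1{t'}\cdot(\ldots)$, i.e.\ apply Lemma \ref{Maximal-Inq} to the rescaled Herz space $\dot E_{\vec p/t'}^{t'\vec\alpha,\vec q/t'}$. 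Here the condition becomes $t'\alpha_i<1-\frac{t'}{p_i}$, which holds for $t'<1/(\alpha_i+1/p_i)$, exactly as in Lemma \ref{lamda-equi copy1}. Finally, the ``if and only if'' statement follows from the two-sided norm estimate.
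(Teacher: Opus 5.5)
Your proposal is correct and follows the route the paper indicates: the classical argument via the Peetre maximal inequality, combined with Lemma~\ref{Maximal-Inq}. Your product-kernel bound $(1+2^j|w|)^{-Nt}\le\prod_{i=1}^n(1+2^j|w_i|)^{-Nt/n}$ producing $\mathcal M_t$, and the rescaling to $\dot E_{\vec p/t}^{t\vec\alpha,\vec q/t}$ that removes the restriction $\alpha_i<1-\frac1{p_i}$, are the details the paper leaves implicit. For the reverse inequality no Tauberian construction is needed (and $\{\psi_k\}$ does not literally satisfy \eqref{Ass1}--\eqref{Ass2}): $\sum_k\psi_k=1$ gives $\varphi_j*f=\sum_{|k-j|\le 2}\varphi_j*\mathcal F^{-1}\psi_k*f$ directly, up to a normalizing constant.
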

\section{Elementary Embeddings}
The following theorem provides basic embeddings for the spaces
$\dot{E}_{\vec{p}}^{\vec{\alpha},\vec{q}}F_{\beta}^{s}$.

\begin{theorem}
	\label{embeddings1.1}
	Let $s\in\mathbb{R}$, $0<\vec{p},\vec{q}<\infty$,
	$\vec{\alpha}=(\alpha_1,\ldots,\alpha_n)\in\mathbb{R}^{n}$,
	$0<\beta\leq\infty$, and
	$\alpha_i>-\frac1{p_i}$ for all $i\in\{1,\ldots,n\}$.
	
	\begin{enumerate}
		\item[(i)] If $0<\beta_1\leq\beta_2\leq\infty$, then
		\begin{equation}
		\dot{E}_{\vec{p}}^{\vec{\alpha},\vec{q}}F_{\beta_1}^{s}
		\hookrightarrow
		\dot{E}_{\vec{p}}^{\vec{\alpha},\vec{q}}F_{\beta_2}^{s}.
		\label{embed1}
		\end{equation}
		
		\item[(ii)] If $0<\beta_1,\beta_2\leq\infty$ and $\varepsilon>0$, then
		\begin{equation}
		\dot{E}_{\vec{p}}^{\vec{\alpha},\vec{q}}F_{\beta_1}^{s+\varepsilon}
		\hookrightarrow
		\dot{E}_{\vec{p}}^{\vec{\alpha},\vec{q}}F_{\beta_2}^{s}.
		\label{embed2}
		\end{equation}
		
		\item[(iii)] Let
		$\vec{q}_1=(q_1^1,\ldots,q_n^1)\in(0,\infty)^n$ and
		$\vec{q}_2=(q_1^2,\ldots,q_n^2)\in(0,\infty)^n$.
		If $0<q_i^1\leq q_i^2<\infty$ for all $i\in\{1,\ldots,n\}$, then
		\begin{equation}
		\dot{E}_{\vec{p}}^{\vec{\alpha},\vec{q}_1}F_{\beta}^{s}
		\hookrightarrow
		\dot{E}_{\vec{p}}^{\vec{\alpha},\vec{q}_2}F_{\beta}^{s}.
		\label{embed3}
		\end{equation}
		
		\item[(iv)] Let
		$\vec{p}_1=(p_1^1,\ldots,p_n^1)\in(0,\infty)^n$ and
		$\vec{p}_2=(p_1^2,\ldots,p_n^2)\in(0,\infty)^n$.
		If $0<p_i^1\leq p_i^2<\infty$ for all $i\in\{1,\ldots,n\}$, then
		\begin{equation}
		\dot{E}_{\vec{p}_2}^{\vec{\alpha},\vec{q}}F_{\beta}^{s}
		\hookrightarrow
		\dot{E}_{\vec{p}_1}^{\vec{r},\vec{q}}F_{\beta}^{s},
		\label{embed4}
		\end{equation}
		where
		\[
		r_i=\alpha_i-\Big(\frac1{p_i^1}-\frac1{p_i^2}\Big),
		\qquad i\in\{1,\ldots,n\}.
		\]
	\end{enumerate}
\end{theorem}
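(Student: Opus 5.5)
All four embeddings are pointwise or normwise monotonicity statements, and I would prove each directly from Definition \ref{B-F-def}. By Theorem \ref{partition-equi} (or Theorem \ref{phi-tran}), the choice of $\Phi,\varphi$ does not matter, so the same system $\{\varphi_k\}$ can be used on both sides. Write $g_\beta=(\sum_k 2^{ks\beta}|\varphi_k*f|^\beta)^{1/\beta}$.

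For (i), the inclusion $\ell^{\beta_1}\hookrightarrow\ell^{\beta_2}$ gives $g_{\beta_2}\le g_{\beta_1}$ pointwise. Since the norm of $\dot E_{\vec p}^{\vec\alpha,\vec q}$ is a lattice quasi-norm (each $\dot K^{\alpha_i,q_i}_{p_i}$ is monotone), \eqref{embed1} follows.

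For (ii), the pointwise bound is
\[
g_{\beta_2}^{s}(x)\le \sup_k 2^{k(s+\varepsilon)}|\varphi_k*f(x)|\Big(\sum_k 2^{-k\varepsilon\beta_2}\Big)^{1/\beta_2}\lesssim g^{s+\varepsilon}_{\infty}(x)\le g^{s+\varepsilon}_{\beta_1}(x).
\]
Monotonicity of the norm again gives \eqref{embed2}.

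For (iii), the smoothness part plays no role. It suffices to show $\|h\|_{\dot E_{\vec p}^{\vec\alpha,\vec q_2}}\le\|h\|_{\dot E_{\vec p}^{\vec\alpha,\vec q_1}}$ for measurable $h$, and then apply this to $h=g_\beta$. In one variable, $\|\cdot\|_{\dot K^{\alpha_i,q_i^2}_{p_i}}\le\|\cdot\|_{\dot K^{\alpha_i,q_i^1}_{p_i}}$ is just $\ell^{q_i^1}\hookrightarrow\ell^{q_i^2}$ applied to the sequence $\{2^{k_i\alpha_i}\|h\chi_{k_i}\|_{L^{p_i}}\}_{k_i}$. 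I would iterate over $i=1,\dots,n$. At step $i$, I replace the inner $i$-th quasi-norm by the smaller one pointwise in the remaining variables $(x_{i+1},\dots,x_n)$, and then use monotonicity of the outer norms $\dot K^{\alpha_{j},q_j}_{p_j}$, $j>i$, to pass this inequality through them.

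For (iv), I would again reduce to a statement for functions:
\[
\|h\|_{\dot E_{\vec p_1}^{\vec r,\vec q}}\lesssim\|h\|_{\dot E_{\vec p_2}^{\vec\alpha,\vec q}}.
\]
In one variable I apply Hölder on the annulus $R_{k_i}$ with $1/p_i^1=1/p_i^2+1/u_i$:
\[
\|h\chi_{k_i}\|_{L^{p_i^1}(\mathbb R)}\le\|h\chi_{k_i}\|_{L^{p_i^2}(\mathbb R)}\,\|\chi_{R_{k_i}}\|_{L^{u_i}(\mathbb R)}\approx 2^{k_i(\frac1{p_i^1}-\frac1{p_i^2})}\|h\chi_{k_i}\|_{L^{p_i^2}(\mathbb R)}.
\]
Multiplying by $2^{k_ir_i}$, the exponent becomes $r_i+\frac1{p_i^1}-\frac1{p_i^2}=\alpha_i$, so $\|h\|_{\dot K^{r_i,q_i}_{p_i^1}}\lesssim\|h\|_{\dot K^{\alpha_i,q_i}_{p_i^2}}$ with a constant depending only on $p_i^1,p_i^2$. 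Iterating over the coordinates as in (iii) yields the claim, and applying it to $h=g_\beta$ gives \eqref{embed4}.

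The step I expect to need the most care is the iteration in (iii) and (iv). There one must check that a one-dimensional inequality, valid for each fixed value of the outer variables, passes through the remaining iterated Herz quasi-norms. This holds because each $\dot K^{\alpha_j,q_j}_{p_j}$ is a lattice quasi-norm and the constants are uniform in those variables. I would also confirm that the hypotheses of Theorem \ref{partition-equi} hold for the target spaces, so that the target spaces are independent of the system. For (iv) this requires $r_i>-1/p_i^1$, which holds because $r_i+\frac1{p_i^1}=\alpha_i+\frac1{p_i^2}>0$.
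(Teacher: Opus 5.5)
Your proposal is correct and follows the same route as the paper. Parts (i) and (iii) come from $\ell^{\beta_1}\hookrightarrow\ell^{\beta_2}$ and $\ell^{q_i^1}\hookrightarrow\ell^{q_i^2}$ together with monotonicity of the iterated Herz quasi-norm. Part (ii) comes from the pointwise bound by $\sup_k 2^{k(s+\varepsilon)}|\varphi_k*f|$ and $\ell^{\beta_1}\hookrightarrow\ell^{\infty}$, and part (iv) from H\"older's inequality on the annuli $R_{k_i}$. You additionally spell out two things the paper leaves implicit: the coordinate-by-coordinate passage through the outer quasi-norms, and the identity $\|\chi_{R_{k_i}}\|_{L^{u_i}(\mathbb R)}=2^{k_i/u_i}$.
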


\begin{proof}
	The embeddings \eqref{embed1} and \eqref{embed3} are immediate consequences
	of the embeddings between Lebesgue sequence spaces and the embedding
	\[
	\dot{E}_{\vec{p}}^{\vec{\alpha},\vec{q}_1}(\mathbb{R}^n)
	\hookrightarrow
	\dot{E}_{\vec{p}}^{\vec{\alpha},\vec{q}_2}(\mathbb{R}^n).
	\]
	
	Let $\Phi$ and $\varphi$ satisfy \eqref{Ass1} and \eqref{Ass2},
	respectively, and let
	$f\in \dot{E}_{\vec{p}}^{\vec{\alpha},\vec{q}}F_{\beta_1}^{s+\varepsilon}$.
	To prove \eqref{embed2}, note that $\varepsilon>0$ implies
	\[
	\Big\|
	\Big(
	\sum_{k=0}^{\infty}
	2^{ks\beta_2}
	|\varphi_k*f|^{\beta_2}
	\Big)^{1/\beta_2}
	\Big\|_{\dot{E}_{\vec{p}}^{\vec{\alpha},\vec{q}}(\mathbb{R}^n)}
	\leq
	c
	\Big\|
	\sup_{k\in\mathbb{N}_0}
	\big(
	2^{k(s+\varepsilon)}
	|\varphi_k*f|
	\big)
	\Big\|_{\dot{E}_{\vec{p}}^{\vec{\alpha},\vec{q}}(\mathbb{R}^n)}.
	\]
	The desired estimate then follows from the embedding
	$\ell^{\beta_1}\hookrightarrow\ell^{\infty}$.

	Finally, the embedding \eqref{embed4} follows immediately from
	H\"{o}lder's inequality.
\end{proof}

\begin{theorem}
	\label{embeddings-S-inf}
	Let $s\in \mathbb{R}$, $0<\vec{p},\vec{q}<\infty$,
	$\vec{\alpha}=(\alpha_{1},\ldots,\alpha_{n})\in \mathbb{R}^{n}$,
	$0<\beta\leq\infty$, and
	$\alpha_i>-\frac1{p_i}$ for all $i\in\{1,\ldots,n\}$.
	
	\smallskip
	
	\noindent
	$\mathrm{(i)}$ The embedding
	\begin{equation}
	\mathcal{S}(\mathbb{R}^{n})
	\hookrightarrow
	\dot{E}_{\vec{p}}^{\vec{\alpha},\vec{q}}F_{\beta}^{s}
	\label{embedding}
	\end{equation}
	holds. Moreover, if 
	$0<\beta<\infty$, then $\mathcal{S}(\mathbb{R}^{n})$ is dense in
	$\dot{E}_{\vec{p}}^{\vec{\alpha},\vec{q}}F_{\beta}^{s}$.
	
	\smallskip
	
	\noindent
	$\mathrm{(ii)}$ The embedding
	\begin{equation}
	\dot{E}_{\vec{p}}^{\vec{\alpha},\vec{q}}F_{\beta}^{s}
	\hookrightarrow
	\mathcal{S}'(\mathbb{R}^{n})
	\label{embeddingsSch}
	\end{equation}
	holds.
\end{theorem}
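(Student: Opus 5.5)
For \eqref{embedding} I will estimate the Littlewood--Paley pieces pointwise and then check the Herz norm of a product weight. Let $f\in\mathcal S(\mathbb R^n)$. Since $\varphi$ has vanishing moments of all orders, the estimate \eqref{convolution} (from \cite[Lemma 2.4]{YSY10}) gives, for every $L>0$,
\[
|\varphi_k*f(x)|\lesssim 2^{-kL}\|f\|_{\mathcal S_{M+1}}(1+|x|)^{-n-L},\qquad k\in\mathbb N .
\]
For $k=0$ the same bound holds directly, because $\Phi*f\in\mathcal S$. I then choose $L>|s|$. This makes
\[
\Big(\sum_{k}2^{ks\beta}|\varphi_k*f|^{\beta}\Big)^{1/\beta}\lesssim \|f\|_{\mathcal S_{M+1}}(1+|x|)^{-n-L},
\]
and it remains to show that $(1+|x|)^{-n-L}\in \dot E_{\vec p}^{\vec\alpha,\vec q}(\mathbb R^n)$ for $L$ large. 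I bound $(1+|x|)^{-n-L}\le\prod_{i}(1+|x_i|)^{-N}$ with $N=(n+L)/n$, so the mixed norm factorizes into one-dimensional $\dot K_{p_i}^{\alpha_i,q_i}$ norms of $(1+|t|)^{-N}$. The $i$-th such norm is
\[
\Big(\sum_{k_i\le 0}2^{k_i(\alpha_i+1/p_i)q_i}+\sum_{k_i>0}2^{k_i(\alpha_i+1/p_i-N)q_i}\Big)^{1/q_i}.
\]
The first sum is finite precisely because $\alpha_i>-1/p_i$, and the second is finite once $N>\alpha_i+1/p_i$. Together these give continuity of the inclusion with respect to a Schwartz seminorm.

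For density when $\beta<\infty$, I will pass to the sequence side. Take $f$ in the space and use Theorem \ref{phi-tran}, $f=T_\psi S_\varphi f$. Let $f_N$ be the partial sum of $T_\psi$ over $k\le N$, $|m|\le N$; each $f_N$ is a finite combination of Schwartz functions. Then $f-f_N=T_\psi(\lambda-\lambda^N)$, where $\lambda^N$ is the truncation of $\lambda=S_\varphi f$, and boundedness of $T_\psi$ gives $\|f-f_N\|\lesssim\|\lambda-\lambda^N\|_{\dot E_{\vec p}^{\vec\alpha,\vec q}f^s_\beta}$. The function $g_N=(\sum_{(v,m)\notin\text{trunc}}2^{v(s+n/2)\beta}|\lambda_{v,m}|^\beta\chi_{v,m})^{1/\beta}$ decreases pointwise to $0$ and is dominated by $g_0\in\dot E$, using $\beta<\infty$. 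I then need dominated convergence in the mixed Herz norm. This holds since $\vec p,\vec q<\infty$: I will apply it iteratively to each $L^{p_i}$ norm on annuli and each $\ell^{q_i}$ sum, from the inner variable outward. This iterated dominated-convergence step is the main obstacle and must be checked carefully, especially the measurability and finiteness of the intermediate quantities.

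For \eqref{embeddingsSch} I combine the $\varphi$-transform with Lemma \ref{Inv-phi-trans}. For $f$ in the space, $f=T_\psi S_\varphi f$ by Theorem \ref{phi-tran}. Lemma \ref{Inv-phi-trans} then yields $|\langle f,\phi\rangle|\lesssim\|\phi\|_{\mathcal S_M}\|S_\varphi f\|_{\dot E f^s_\beta}$, and the right side is $\lesssim\|\phi\|_{\mathcal S_M}\|f\|_{\dot E F^s_\beta}$. This is exactly the continuous embedding into $\mathcal S'(\mathbb R^n)$.
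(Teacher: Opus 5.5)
Your proposal is correct. For part (i) the paper gives no argument and only refers to \cite{Dr-AOT23,YY1}; for part (ii) you take a different route from the paper.

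For (ii), the paper goes through Besov-type spaces. It uses $\dot{E}_{\vec p}^{\vec\alpha,\vec q}F_\beta^s\hookrightarrow \dot{E}_{\vec p}^{\vec\alpha,\vec q}B_\infty^s$, which is immediate from $\ell^\beta\hookrightarrow\ell^\infty$ and the lattice property of the Herz norm. It then invokes the external result $\dot{E}_{\vec p}^{\vec\alpha,\vec q}B_\infty^s\hookrightarrow\mathcal S'(\mathbb R^n)$ from \cite{Dr-MixedBesov}.

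You stay inside the present paper instead. You combine $f=T_\psi S_\varphi f$, the quantitative bound $I_1+I_2\lesssim\|\phi\|_{\mathcal S_M}\|\lambda\|_{\dot{E}_{\vec p}^{\vec\alpha,\vec q}f_\beta^s}$ proved in Lemma \ref{Inv-phi-trans}, and the boundedness of $S_\varphi$ from Theorem \ref{phi-tran}.
\begin{itemize}
\item This is the classical Frazier--Jawerth argument, and it is not circular: neither Lemma \ref{Inv-phi-trans} nor Theorem \ref{phi-tran} uses (ii).
\item Its cost is that it rests on the full $\varphi$-transform theorem, whose proof the paper only sketches. The paper's route needs only an elementary comparison plus a cited Besov result.
\end{itemize}

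For (i), your direct estimate is complete. It uses the majorant $(1+|x|)^{-n-L}$ and the inequality $(1+|x|)^{n}\ge\prod_i(1+|x_i|)$, which factorizes the mixed Herz norm into one-dimensional $\dot K_{p_i}^{\alpha_i,q_i}$ norms, followed by two geometric series. This makes visible that $\alpha_i>-1/p_i$ is exactly what controls the annuli near the origin.

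Your density argument is also sound.
\begin{itemize}
\item $g_N\to0$ almost everywhere. For fixed $x$ with $g_0(x)<\infty$, the unique $m$ with $x\in Q_{v,m}$ satisfies $|m|\lesssim 2^v(1+|x|)$. Hence every omitted term with $v\le N$ has $v\gtrsim\log_2\bigl(N/(1+|x|)\bigr)$ and lies in the tail of a convergent series.
\item The iterated dominated-convergence step you flag is routine. Tonelli gives measurability of the intermediate norms. Finiteness of $\|g_0\|_{\dot{E}_{\vec p}^{\vec\alpha,\vec q}(\mathbb R^n)}$ places almost every slice in the relevant $L^{p_i}$ and $\ell^{q_i}$ spaces. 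Since $p_i,q_i<\infty$, dominated convergence passes through each level.
\end{itemize}
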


\begin{proof}
	The proof of the embedding \eqref{embedding} and the density of
	$\mathcal{S}(\mathbb{R}^{n})$ in
	$\dot{E}_{\vec{p}}^{\vec{\alpha},\vec{q}}F_{\beta}^{s}$
	follows from the same arguments as in
	\cite{Dr-AOT23,YY1}. Therefore, it remains to prove that
	\[
	\dot{E}_{\vec{p}}^{\vec{\alpha},\vec{q}}F_{\beta}^{s}
	\hookrightarrow
	\mathcal{S}'(\mathbb{R}^{n}).
	\]
	But this follows by the embedding
	\[
	\dot{E}_{\vec{p}}^{\vec{\alpha},\vec{q}}B_{\infty}^{s}
	\hookrightarrow
	\mathcal{S}'(\mathbb{R}^{n}),
	\]
	see \cite{Dr-MixedBesov}.
	This completes the proof.
\end{proof}

\section{Sobolev embeddings}

In this section, we establish Sobolev-type embeddings for the spaces
$\dot{E}_{\vec p}^{\vec\alpha,\vec q}F_\beta^s$. It is well known that
\begin{equation}
F_{q,\infty}^{s_{2}}\hookrightarrow F_{p,\beta}^{s_{1}},
\label{Sobolev-Tr-Li}
\end{equation}
provided that
\[
s_{1}-\frac{n}{p}=s_{2}-\frac{n}{q},
\]
where $0<q<p<\infty$ and $0<\beta\leq\infty$; see e.g.
\cite[Theorem 2.7.1]{T83}. In this section, we extend this Sobolev-type
embedding to mixed-norm Herz--Triebel--Lizorkin spaces.

To this end, we first establish the corresponding Sobolev embedding
properties for the sequence spaces
$\dot{E}_{\vec p}^{\vec\alpha,\vec r}f_{\beta}^{s}$. Let $0<\vec p,\vec q<\infty$, $\vec\alpha_1=(\alpha_1^{1},\ldots,\alpha_n^{1})\in\mathbb R^n$ and 
$\vec\alpha_2=(\alpha_1^{2},\ldots,\alpha_n^{2})\in\mathbb R^n$.
Recall that
\[
\frac{1}{\mathbf p}
=\sum_{i=1}^{n}\frac{1}{p_i},
\qquad
\frac{1}{\mathbf q}
=\sum_{i=1}^{n}\frac{1}{q_i},
\]
and
\[
\boldsymbol{\alpha}_{j}
=\sum_{i=1}^{n}\alpha_i^{\,j},
\qquad
j\in\{1,2\}.
\]

 For $v\in\{1,\ldots,n\}$, we define
\[
\frac{1}{\mathbf p_v}
=\sum_{i=v}^{n}\frac{1}{p_i},
\qquad
\frac{1}{\mathbf q_v}
=\sum_{i=v}^{n}\frac{1}{q_i},
\]
and  
\[
\boldsymbol{\tilde{\mathbf{\alpha}}_{j,v}}
=\sum_{i=v}^{n}\alpha_i^{\,j},
\qquad
j\in\{1,2\}.
\]
Furthermore, let
\[
s_2^{\,v}
=s_2-\frac{1}{\mathbf q_v}
+\frac{1}{\mathbf p_v}
+\boldsymbol{\tilde{\mathbf{\alpha}}_{1,v}}
-\boldsymbol{\tilde{\mathbf{\alpha}}_{2,v}}.
\]

The following theorem establishes a Sobolev-type embedding for the mixed-norm Herz--Triebel--Lizorkin spaces
$\dot{E}_{\vec p}^{\vec\alpha,\vec r}f_{\beta}^{s}$.
\begin{theorem}
	\label{embeddings-sobolev}
	Let $s_1,s_2\in\mathbb{R}$,
	$0<\vec q<\vec p<\infty$,
	$0<\vec r\le \infty$,
	$\vec\alpha_1=(\alpha_1^{1},\ldots,\alpha_n^{1})\in\mathbb{R}^n$,
	$\vec\alpha_2=(\alpha_1^{2},\ldots,\alpha_n^{2})\in\mathbb{R}^n$,
	and $0<\theta,\beta\le \infty$.
	Assume that
	\[
	\alpha_i^{2}\ge \alpha_i^{1}>-\frac1{p_i},
	\qquad i\in \{1,\dots,n\},
	\]
	and
	\begin{equation}
	s_1-\frac1{\mathbf p}-\boldsymbol{\alpha}_1
	=
	s_2-\frac1{\mathbf q}-\boldsymbol{\alpha}_2.
	\label{newexp1.1}
	\end{equation}
	Then
	\begin{equation}
	\dot{E}_{\vec q}^{\vec\alpha_2,\vec r}
	f_{\theta}^{s_2}
	\hookrightarrow
	\dot{E}_{\vec p}^{\vec\alpha_1,\vec r}
	f_{\beta}^{s_1}.
	\label{Sobolev-emb1.1}
	\end{equation}
\end{theorem}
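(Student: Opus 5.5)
We prove \eqref{Sobolev-emb1.1} by an iterative, coordinate-by-coordinate Jawerth--Franke type argument adapted to the sequence spaces. By Theorem \ref{embeddings1.1}(i) applied at the sequence level, it suffices to treat $\theta=\infty$ and $\beta$ small (say $\beta<\min(p_-,r_-)$). We first reduce to the case where only one pair $(q_v,\alpha_v^2)$ differs from $(p_v,\alpha_v^1)$. The numbers $s_2^{\,v}$ defined before the theorem are exactly the intermediate smoothness indices. Set $\vec p^{\,(v)}=(q_1,\dots,q_{v-1},p_v,\dots,p_n)$, and define $\vec\alpha^{(v)}$ analogously, with $\alpha^2$ in the first $v-1$ slots and $\alpha^1$ afterwards; keep $\vec r$ fixed. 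Note that $s_2^{\,1}=s_1$ by \eqref{newexp1.1}. The claim is then that
\[
\dot E_{\vec p^{\,(v+1)}}^{\vec\alpha^{(v+1)},\vec r}f_\infty^{s_2^{v+1}}\hookrightarrow \dot E_{\vec p^{\,(v)}}^{\vec\alpha^{(v)},\vec r}f_\beta^{s_2^{v}},
\]
where $s_2^{\,n+1}:=s_2$. Chaining these embeddings, together with the elementary embeddings $f^{s}_\beta\hookrightarrow f^s_\infty$, gives the theorem. Since the mixed Herz quasi-norm is iterated, and the $v$-th step only changes the inner-most (after freezing) $\dot K_{p_v}$ norm in the $x_v$ variable, each step reduces to a one-dimensional statement. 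In that statement the other variables are parameters, and the outer norms are applied afterwards using monotonicity of the quasi-norms.

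For the one-dimensional step, fix the remaining variables and write $g_k(x_v)$ for $\sup_{m}2^{k(s+n/2)}|\lambda_{k,m}|\chi_{k,m}$, suitably restricted. The key pointwise estimate will be
\[
\Big(\sum_{k}2^{ks\beta}|g_k|^\beta\Big)^{1/\beta}\lesssim \Big(\sum_{k\le N}\cdots\Big)^{1/\beta}+\Big(\sum_{k>N}\cdots\Big)^{1/\beta}.
\]
We choose $N=N(x_v)$ by comparing two bounds. The low-frequency part is bounded by $2^{N(s_1-s_2)}\sup_k 2^{ks_2}|g_k|$, since $s_1<s_2$ is forced by \eqref{newexp1.1} together with $\vec q<\vec p$ and $\alpha^2\ge\alpha^1$. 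The high-frequency part uses the sequence-level "Nikol'skii" bound $|\lambda_{k,m}|\lesssim 2^{k/q_v}(1+2^{-k}|m_v|)^{1/t_v-\alpha_v}\|\cdot\|$, derived exactly as \eqref{main-est} in the proof of Lemma \ref{Inv-phi-trans}, restricted to the annulus $R_{j}$ containing $x_v$. Optimizing $N$ in terms of $2^{j}$ and the local size gives, on each annulus $R_j$ in $x_v$, the estimate
\[
\|G\chi_j\|_{L^{p_v}}\lesssim 2^{j(\alpha_v^2-\alpha_v^1)}\cdots\|\sup_k 2^{ks_2}g_k\,\tilde\chi_j\|_{L^{q_v}}^{\,\cdot},
\]
with $\tilde\chi_j$ covering neighbouring annuli. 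Then the $\ell^{r_v}$ sum over $j$ is handled by Lemma \ref{lem:lq-inequality}. The geometric decay there comes from $\alpha_v^2\ge\alpha_v^1$ and $\alpha_v^1>-1/p_v$, which give the factor $a<1$ needed when summing contributions of inner annuli, i.e. near the origin where cubes $Q_{k,m}$ with $2^{-k}\gtrsim 2^j$ straddle several annuli.

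The main obstacle is precisely this interaction between the dyadic cubes and the annuli $R_j$. For $2^{-k}\le 2^{j}$ the cube lies essentially inside one annulus, and the classical one-dimensional Triebel--Lizorkin Sobolev argument (the pointwise interpolation $\sum_k2^{ks_1\beta}|g_k|^\beta\lesssim(\sup_k2^{ks_2}|g_k|)^{\beta q/p}(\cdots)$ followed by Hölder) applies directly. For $2^{-k}>2^{j}$ the cube meets all annuli $R_i$ with $i\le j$. Here I would bound $|\lambda_{k,m}|$ by averaging over the largest annulus the cube meets, producing sums of the form $\sum_{i\ge j}2^{(i-j)(\cdot)}\varepsilon_i$. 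These are controlled by Lemma \ref{lem:lq-inequality}, provided the exponent is negative; checking this sign condition is where the hypothesis $\alpha_i^1>-1/p_i$ enters and is the delicate point. Finally, since everything was done at the level of $\dot E f$ sequence spaces, no maximal-function restriction such as the upper bound in Lemma \ref{Maximal-Inq} is needed.
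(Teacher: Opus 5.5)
Your chain through the intermediate spaces $\dot E_{\vec p^{(v)}}^{\vec\alpha^{(v)},\vec r}f_\beta^{s_2^v}$, with $s_2^1=s_1$, is a sound plan, but two steps do not work as written.

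First, the reduction of each step to one variable is not justified. For $v\ge 2$ the variables $x_1,\dots,x_{v-1}$ are not parameters. Their norms $\dot K_{q_i}^{\alpha_i^2,r_i}$, $i<v$, are taken \emph{before} the $x_v$-norm, and the $\ell^\beta$-sum over $k$ sits inside them. So $\dot K_{p_v}^{\alpha_v^1,r_v}$ is not applied to anything of the form $(\sum_k|G_k(x_v)|^\beta)^{1/\beta}$, and monotonicity of the outer norms cannot repair this.

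What is missing is a Minkowski interchange. Since $\|h\|_{\dot K_q^{\alpha,r}}^\beta=\|\,|h|^\beta\|_{\dot K_{q/\beta}^{\alpha\beta,r/\beta}}$ and the latter is a norm when $\beta\le\min(q,r)$, one has $\|(\sum_k|h_k|^\beta)^{1/\beta}\|_{\dot K_q^{\alpha,r}}\le(\sum_k\|h_k\|_{\dot K_q^{\alpha,r}}^\beta)^{1/\beta}$. You apply this in the inner coordinates. That requires $\beta\le\min(q_i,r_i)$ for $i<v$; your choice $\beta<\min(p_-,r_-)$ is not enough, because $q_i<p_i$. It bounds the target by the $\dot K_{p_v}$-norm of $(\sum_k 2^{k(s_2^v+n/2)\beta}G_k^\beta)^{1/\beta}$. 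Here $G_k$ is the iterated inner norm of the level-$k$ piece, which is a step function at scale $2^{-k}$ in $x_v$. On the source side, $\sup_k G_k$ is dominated by the inner norms of the supremum. With this, each step is genuinely one-dimensional and the chain closes.

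This is the role of $V$ in the paper: with $\beta=1$, the sum over $v$ sits outside the $\dot K_{q_1}$-norm. The paper uses it differently, though. Coordinates $1,\dots,n-1$ are processed with the $v$-sum kept outside the inner norms ($\digamma\lesssim V$ and \eqref{sobolev3}). A true $f_\infty\to f_1$ one-dimensional embedding is used only once, in $x_n$, quoted from \cite{Drihem2.13}. Your route runs a Jawerth-type argument in every coordinate. That is more symmetric, and the small-$\beta$ device covers the quasi-Banach range directly, whereas the paper only asserts the reduction to $\beta=1$ and $\vec p,\vec q,\vec r\ge 1$.

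Second, your one-dimensional estimate attaches the two bounds to the wrong frequency ranges, and as stated it fails. Since $s_1<s_2$, the sum $\sum_{k\le N}2^{k(s_1-s_2)\beta}$ is dominated by $k=0$. So the pointwise supremum gives only an $O(1)$ bound for $k\le N$, not $2^{N(s_1-s_2)}$. Meanwhile the Nikol'skii bound grows in $k$ (like $2^{k/p_v}$ after normalization), so it diverges when summed over $k>N$.

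The two must be swapped, as in the paper's $J_{1,k_1}$ and $J_{2,k_1}$:
\begin{enumerate}
\item[(a)] For $3-j\le k\le N$, use the purely local averaging bound $|\lambda_{k,m}|^{q_v}\le 2^{k}\|\sum_{m_v}\lambda_{k,m}\chi_{k,m_v}\chi_{\widetilde R_j}\|_{L^{q_v}}^{q_v}$ on the enlarged annulus. The weighted global bound of type \eqref{main-est} is not sharp enough here. This gives $\lesssim 2^{N/p_v}\delta$, with $\delta$ the local $L^{q_v}$-size on $\widetilde R_j$.
\item[(b)] For $k>N$, use the pointwise supremum $P$. After absorbing $2^{(\alpha_v^1-\alpha_v^2)(k+j)}\le 1$, this gives $\lesssim 2^{N(1/p_v-1/q_v)}P$.
\item[(c)] Choose $N$ on the level sets $2^{N/q_v}<\delta^{-1}P\le 2^{(N+1)/q_v}$. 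Then $\int_{R_j}(\cdots)^{p_v}\lesssim\int_{R_j}(\delta^{-1}P)^{q_v}\lesssim 1$.
\end{enumerate}
Your treatment of large cubes near the origin is fine and matches the paper's estimate of $I_1$. There you average over the largest annulus the cube meets and sum with Lemma~\ref{lem:lq-inequality}, using $\alpha_v^1+1/p_v>0$.
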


\begin{proof}
By similarity, it suffices to consider the case $\beta=1$,
$1\leq \vec{p},\vec{q}<\infty$, and $1\leq \vec{r}\leq \infty$.
Let $\lambda \in \dot{E}_{\vec{q}}^{\vec{\alpha}_{2},\vec{r}}
f_{\theta}^{s_{2}}$.
The proof is divided into two steps.

\textit{Step 1.} Define
\[
\digamma
=
\Biggl\|
\sum_{v=0}^{\infty}
2^{v\left(s_{1}+\frac{n}{2}\right)}
\sum_{m\in\mathbb Z^{n}}
\lambda_{v,m}\chi_{v,m}
\Biggr\|_{\dot K_{p_{1}}^{\alpha_{1}^{1},r_{1}}}
\]
and 
\[
V
=
\sum_{v=0}^{\infty}2^{(s_{2}^{2}+\frac n2)v}
\sum_{\widetilde m\in\mathbb Z^{\,n-1}}
\Bigl\|
\sum_{m_1\in\mathbb Z}
\lambda_{v,m}\chi_{v,m_1}
\Bigr\|_{\dot K_{q_1}^{\alpha_1^{2},r_1}}
\chi_{v,\widetilde m},
\]
where
\[
m=(m_{1},\ldots,m_{n}),
\qquad
\tilde m=(m_{2},\ldots,m_{n}).
\]
We claim that 
\[
\digamma
\lesssim V.
\]
Furthermore, we claim that for every $j\in\{2,\ldots,n-1\}$

\begin{equation*}
\big\|\cdots \big\|V\big\|_{\dot{K}_{p_{2}}^{\alpha_{2}^{1},r_{2}}}
\cdots \big\|_{\dot{K}_{p_{j}}^{\alpha_{j}^{1},r_{j}}}
\end{equation*}
can be estimated from above by
\begin{equation}
c\sum_{v=0}^{\infty}
2^{(s_{2}^{j+1}+\frac{n}{2})v}
\sum_{\tilde m_{j}\in\mathbb Z^{\,n-j}}
\Big\|\cdots
\Big\|
\sum_{\bar m_{j}\in\mathbb Z^{j}}
\lambda_{v,m}\chi_{v,\bar m_{j}}
\Big\|_{\dot K_{q_{1}}^{\alpha_{1}^{2},r_{1}}}
\cdots
\Big\|_{\dot K_{q_{j}}^{\alpha_{j}^{2},r_{j}}}
\chi_{v,\tilde m_{j}},
\label{sobolev3}
\end{equation}
where
\[
m=(m_{1},\ldots,m_{n}),\qquad
\bar m_{j}=(m_{1},\ldots,m_{j}),
\qquad
\tilde m_{j}=(m_{j+1},\ldots,m_{n}).
\]

In \eqref{sobolev3}, we take $j=n-1$ and define
\begin{equation*}
\hat{\lambda}_{v,m_{n}}
=
\Big\|\cdots
\Big\|
\sum_{\bar m\in\mathbb Z^{\,n-1}}
\lambda_{v,m}\chi_{v,\bar m}
\Big\|_{\dot K_{q_{1}}^{\alpha_{1}^{2},r_{1}}}
\cdots
\Big\|_{\dot K_{q_{n-1}}^{\alpha_{n-1}^{2},r_{n-1}}},
\end{equation*}
where $\bar m=(m_{1},\ldots,m_{n-1})$.

Observe that
\begin{align*}
\|\lambda\|_{\dot E_{\vec p}^{\vec\alpha_{1},\vec r}f_{1}^{s_{1}}}
&\lesssim
\big\|\cdots \big\|V\big\|_{\dot K_{p_{2}}^{\alpha_{2}^{1},r_{2}}}
\cdots \big\|_{\dot K_{p_{n}}^{\alpha_{n}^{1},r_{n}}} \\
&\lesssim
\|\hat{\lambda}\|_{\dot K_{p_{n}}^{\alpha_{n}^{1},r_{n}}
	f_{1}^{s_{2}^{n}}}.
\end{align*}

Recall that
\[
\dot K_{q_{n}}^{\alpha_{n}^{2},r_{n}}
f_{\infty}^{s_{2}}
\hookrightarrow
\dot K_{p_{n}}^{\alpha_{n}^{1},r_{n}}
f_{1}^{s_{2}^{n}},
\]
see \cite{Drihem2.13}. Consequently,
\begin{equation*}
\|\lambda\|_{\dot E_{\vec p}^{\vec\alpha_{1},\vec r}f_{1}^{s_{1}}}
\lesssim
\|\hat{\lambda}\|_{\dot K_{q_{n}}^{\alpha_{n}^{2},r_{n}}
	f_{\infty}^{s_{2}}}
\lesssim
\,\|\lambda\|_{\dot E_{\vec q}^{\vec\alpha_{2},\vec r}
	f_{\infty}^{s_{2}}}.
\end{equation*}

This completes the proof of \eqref{Sobolev-emb1.1}.

\textit{Step 2.} We proceed to prove the assertions stated in Step 1.

\textit{Substep 2.1.} We show that $
\digamma
\lesssim V.$ By the definition of the Herz norm,
\begin{align}
\digamma
&\lesssim
\Biggl(
\sum_{k_{1}=-\infty}^{2}
2^{k_{1}\alpha_{1}^{1}r_{1}}
\Bigl\|
\sum_{v=0}^{\infty}
2^{v\left(s_{1}+\frac{n}{2}\right)}
\sum_{m\in\mathbb Z^{n}}
\lambda_{v,m}\chi_{v,m}\chi_{k_{1}}
\Bigr\|_{L^{p_{1}}(\mathbb R)}^{r_{1}}
\Biggr)^{1/r_{1}}
\label{est2-bis}
\\
&\quad+
\Biggl(
\sum_{k_{1}=3}^{\infty}
2^{k_{1}\alpha_{1}^{1}r_{1}}
\Bigl\|
\sum_{v=0}^{\infty}
2^{v\left(s_{1}+\frac{n}{2}\right)}
\sum_{m\in\mathbb Z^{n}}
\lambda_{v,m}\chi_{v,m}\chi_{k_{1}}
\Bigr\|_{L^{p_{1}}(\mathbb R)}^{r_{1}}
\Biggr)^{1/r_{1}}.
\label{est3-bis}
\end{align}

The term \eqref{est2-bis} is estimated as follows:
\begin{align*}
&
\Biggl(
\sum_{k_{1}=-\infty}^{2}
2^{k_{1}\alpha_{1}^{1}r_{1}}
\Bigl\|
\sum_{v=0}^{2-k_{1}}
2^{v\left(s_{1}+\frac{n}{2}\right)}
\sum_{m\in\mathbb Z^{n}}
\lambda_{v,m}\chi_{v,m}\chi_{k_{1}}
\Bigr\|_{L^{p_{1}}(\mathbb R)}^{r_{1}}
\Biggr)^{1/r_{1}}
\\
&\quad+
\Biggl(
\sum_{k_{1}=-\infty}^{2}
2^{k_{1}\alpha_{1}^{1}r_{1}}
\Bigl\|
\sum_{v=3-k_{1}}^{\infty}
2^{v\left(s_{1}+\frac{n}{2}\right)}
\sum_{m\in\mathbb Z^{n}}
\lambda_{v,m}\chi_{v,m}\chi_{k_{1}}
\Bigr\|_{L^{p_{1}}(\mathbb R)}^{r_{1}}
\Biggr)^{1/r_{1}}
\\
&=: I_{1}+I_{2}.
\end{align*}
\textit{Estimation of \(I_{1}\).}
Let \(x_{1}\in R_{k_{1}}\cap Q_{v,m_{1}}^{1}\) and
\(y_{1}\in Q_{v,m_{1}}^{1}\), where \(k_{1},m_{1}\in\mathbb Z\).
Since
\[
|x_{1}-y_{1}|\le 2^{1-v},
\]
it follows that
\[
|y_{1}|
\le |x_{1}-y_{1}|+|x_{1}|
\le 2^{1-v}+2^{k_{1}}
< 2^{3-v},
\]
because \(v\le 2-k_{1}\). Hence \(y_{1}\) belongs to the interval
\[
Q_{v}=(-2^{3-v},2^{3-v}).
\]

Therefore,
\[
|\lambda_{v,m}|^{t_{1}}
=2^{v}\int_{\mathbb R}
|\lambda_{v,m}|^{t_{1}}
\chi_{v,m_{1}}(y_{1})\,dy_{1}
\le
2^{v}\int_{Q_{v}}
|\lambda_{v,m}|^{t_{1}}
\chi_{v,m_{1}}(y_{1})\,dy_{1},
\]
whenever \(x_{1}\in R_{k_{1}}\cap Q_{v,m_{1}}^{1}\),
\(m=(m_{1},\ldots,m_{n})\in\mathbb Z^{n}\), and \(t_{1}>0\).

Consequently, for every \(x_{1}\in R_{k_{1}}\),
\[
\sum_{m_{1}\in\mathbb Z}
|\lambda_{v,m}|^{t_{1}}
\chi_{v,m_{1}}(x_{1})
\le
2^{v}
\int_{Q_{v}}
\sum_{m_{1}\in\mathbb Z}
|\lambda_{v,m}|^{t_{1}}
\chi_{v,m_{1}}(y_{1})\,dy_{1},
\]
and hence
\[
\sum_{m_{1}\in\mathbb Z}
|\lambda_{v,m}|^{t_{1}}
\chi_{v,m_{1}}(x_{1})
\le
2^{v}
\Bigl\|
\sum_{m_{1}\in\mathbb Z}
\lambda_{v,m}\chi_{v,m_{1}}\chi_{Q_v}
\Bigr\|_{L^{t_{1}}(\mathbb R)}^{t_{1}} .
\]

This yields
\begin{align*}
&
2^{\alpha_{1}^{1}k_{1}}
\Bigl\|
\sum_{v=0}^{2-k_{1}}
2^{v(s_{1}+\frac n2)}
\sum_{m\in\mathbb Z^{n}}
\lambda_{v,m}\chi_{v,m}\chi_{k_{1}}
\Bigr\|_{L^{p_{1}}(\mathbb R)}
\\
&\lesssim
2^{(\alpha_{1}^{1}+\frac1{p_{1}})k_{1}}
\sum_{v=0}^{2-k_{1}}
2^{v(s_{1}+\frac n2+\frac1{t_{1}})}
\sum_{\tilde m\in\mathbb Z^{n-1}}
\Bigl\|
\sum_{m_{1}\in\mathbb Z}
\lambda_{v,m}\chi_{v,m_{1}}\chi_{Q_v}
\Bigr\|_{L^{t_{1}}(\mathbb R)}
\chi_{v,\tilde m},
\end{align*}
where the implicit constant is independent of \(k_{1}\).

Choose \(t_{1}>0\) such that
\[
\frac1{t_{1}}
>
\max\Bigl(
\frac1{q_{1}},
\frac1{q_{1}}+\alpha_{1}^{2}
\Bigr).
\]
Using \eqref{newexp1.1} and Lemma~\ref{lem:lq-inequality}, since $\alpha_1^{1}>-\frac1{p_1}$, we obtain
\begin{align*}
I_{1}^{r_{1}}
&\lesssim
\sum_{v=0}^{\infty}
2^{v(
	s_{2}-\frac1{\mathbf q}
	-\boldsymbol{\alpha}_{2}
	+\frac1{\mathbf p_{2}}
	+\boldsymbol{\tilde{\mathbf{\alpha}}_{1,2}}
	+\frac1{t_{1}}
	+\frac n2
	)r_{1}}
\sum_{\tilde m\in\mathbb Z^{n-1}}
\Bigl\|
\sum_{m_{1}\in\mathbb Z}
\lambda_{v,m}\chi_{v,m_{1}}\chi_{Q_v}
\Bigr\|_{L^{t_{1}}(\mathbb R)}^{r_{1}}
\chi_{v,\tilde m}
\\
&\lesssim
\sum_{v=0}^{\infty}
2^{v(
	s_{2}-\frac1{\mathbf q}
	-\boldsymbol{\alpha}_{2}
	+\frac1{\mathbf p_{2}}
	+\boldsymbol{\tilde{\mathbf{\alpha}}_{1,2}}
	+\frac1{t_{1}}
	+\frac n2
	)r_{1}}
\sum_{\tilde m\in\mathbb Z^{n-1}}
\Bigl(
\sum_{h\le -v}
\Bigl\|
\sum_{m_{1}\in\mathbb Z}
\lambda_{v,m}
\chi_{v,m_{1}}
\chi_{h+3}
\Bigr\|_{L^{t_{1}}(\mathbb R)}^{\tau}
\Bigr)^{\frac {r_{1}} \tau}
\chi_{v,\tilde m},
\end{align*}
where $\chi_{h+3}$ denotes the characteristic function of the annulus
$R_{h+3}\subset \mathbb{R}$ and $0<\tau \leq \min(1,t_{1})$.

By H\"{o}lder's inequality, with
\[
\frac1d
=
\frac1{t_{1}}
-\frac1{q_{1}}
-\alpha_{1}^{2},
\]
we obtain
\begin{align*}
I_{1}^{r_{1}}
\lesssim
\sum_{v=0}^{\infty}
2^{v\frac{r_{1}}d}
\Bigl(
\sum_{h\le -v}
2^{h\tau(\frac1d+\alpha_{1}^{2})}
\sup_{j\ge0}\Bigl(
\sum_{\tilde m\in\mathbb Z^{n-1}}
\Bigl\|
2^{(s_{2}^{2}+\frac n2)j}
\sum_{m_{1}\in\mathbb Z}
\lambda_{j,m}\chi_{j,m_{1}}\chi_{h+3}
\Bigr\|_{L^{q_{1}}(\mathbb R)}^{\tau}
\chi_{j,\tilde m}\Bigl)
\Bigr)^{\frac {r_{1}} \tau} .
\end{align*}

Applying Lemma~\ref{lem:lq-inequality} once more yields
\begin{align*}
I_{1}
&\lesssim
\sum_{j=0}^{\infty}
2^{(s_{2}^{2}+\frac n2)j}
\sum_{\tilde m\in\mathbb Z^{n-1}}
\Bigl(
\sum_{h=0}^{\infty}
2^{-\alpha_{1}^{2}hr_{1}}
\Bigl\|
\sum_{m_{1}\in\mathbb Z}
\lambda_{j,m}\chi_{j,m_{1}}\chi_{-h+3}
\Bigr\|_{L^{q_{1}}(\mathbb R)}^{r_{1}}
\Bigr)^{1/r_{1}}
\chi_{j,\tilde m}
\\
&\lesssim
\sum_{j=0}^{\infty}
2^{(s_{2}^{2}+\frac n2)j}
\sum_{\tilde m\in\mathbb Z^{n-1}}
\Bigl\|
\sum_{m_{1}\in\mathbb Z}
\lambda_{j,m}\chi_{j,m_{1}}
\Bigr\|_{\dot K_{q_{1}}^{\alpha_{1}^{2},r_{1}}}
\chi_{j,\tilde m}.
\end{align*}
\textit{Estimation of $I_{2}$.}
We shall prove that
\begin{align*}
&2^{k_{1}\alpha_{1}^{1}}
\Big\|
\sum_{v=3-k_{1}}^{\infty}
2^{v(s_{1}+\frac n2)}
\sum_{m\in\mathbb Z^{n}}
\lambda_{v,m}\chi_{v,m}\chi_{k_{1}}
\Big\|_{L^{p_{1}}(\mathbb R)}
\\
&\lesssim
\sum_{j=0}^{\infty}
2^{j(s_{2}^{2}+\frac n2)}
\sum_{\tilde m\in\mathbb Z^{n-1}}
2^{k_{1}\alpha_{1}^{2}}
\Big\|
\sum_{m_{1}\in\mathbb Z}
\lambda_{j,m}\chi_{j,m_{1}}
\chi_{\widetilde R_{k_{1}}}
\Big\|_{L^{q_{1}}(\mathbb R)}
\chi_{j,\tilde m}
\\
&=:\delta ,
\end{align*}
for every $k_{1}\le 3$, where
\[
\widetilde R_{k_{1}}
=
\bigl\{
x_{1}\in\mathbb R:
2^{k_{1}-2}\leq|x_{1}|<2^{k_{1}+2}
\bigr\}.
\]

Equivalently, it suffices to show that
\begin{equation}
\int_{R_{k_{1}}}
2^{k_{1}\alpha_{1}^{1}p_{1}}
\delta^{-p_{1}}
\Big(
\sum_{v=3-k_{1}}^{\infty}
2^{v(s_{1}+\frac n2)}
\sum_{m\in\mathbb Z^{n}}
|\lambda_{v,m}|
\chi_{v,m}(x)
\Big)^{p_{1}}
dx_{1}
\lesssim 1.
\label{sobolevfor}
\end{equation}

The left-hand side of \eqref{sobolevfor} can be rewritten as
\[
\int_{R_{k_{1}}}
\Big(
\delta^{-1}
\sum_{v=3-k_{1}}^{\infty}
2^{v(\frac1{p_{1}}-\frac1{q_{1}})
	+(\alpha_{1}^{1}-\alpha_{1}^{2})(v+k_{1})
	+v(s_{2}^{2}+\frac n2)
	+k_{1}\alpha_{1}^{2}}
\sum_{m\in\mathbb Z^{n}}
|\lambda_{v,m}|
\chi_{v,m}(x)
\Big)^{p_{1}}
dx_{1}.
\]

Since $\alpha_{1}^{2}\ge \alpha_{1}^{1}$ and $v+k_{1}\ge3$,
\[
2^{(\alpha_{1}^{1}-\alpha_{1}^{2})(v+k_{1})}\le1.
\]
Hence  the left-hand side of \eqref{sobolevfor} is bounded by
\begin{align*}
&\int_{R_{k_{1}}}
\Big(
\delta^{-1}
\sum_{v=3-k_{1}}^{\infty}
2^{v(\frac1{p_{1}}-\frac1{q_{1}})
	+v(s_{2}^{2}+\frac n2)
	+k_{1}\alpha_{1}^{2}}
\sum_{m\in\mathbb Z^{n}}
|\lambda_{v,m}|
\chi_{v,m}(x)
\Big)^{p_{1}}
dx_{1}
\\
&=:T_{k_{1}}.
\end{align*}

We shall prove that \(T_{k_{1}}(\tilde x)\lesssim 1\) for every \(k_{1}\le 3\) and every $\tilde x=(x_{2},...,x_{n})\in \mathbb{R}^{n-1}$, where the implicit constant is independent of $k_{1}$ and $\tilde x$.
Our argument partially relies on the decomposition technique used in
\cite[Theorem~3.1]{V}; see also \cite{Drihem2.13}.

Set
\[
P_{k_{1}}
=
\sup_{v\ge 3-k_{1}}
2^{v(s_{2}^{2}+\frac n2)+k_{1}\alpha_{1}^{2}}
\sum_{m\in\mathbb Z^{n}}
|\lambda_{v,m}|\chi_{v,m}.
\]
Then
\[
T_{k_{1}}
=
\int_{R_{k_{1}}\cap\{P_{k_{1}}\le\delta\}}
\big(\cdots\,\big)^{p_{1}}dx_{1}
+
\int_{R_{k_{1}}\cap\{P_{k_{1}}>\delta\}}
\big(\cdots\,\big)^{p_{1}}dx_{1}
=:T_{k_{1}}^{1}+T_{k_{1}}^{2}.
\]

\medskip

\textit{Estimate of \(T_{k_{1}}^{1}\).}
Since $q_{1}<p_{1}$ and  \(P_{k_{1}}\le\delta\) on the domain of integration, we obtain
\[
T_{k_{1}}^{1}
\lesssim
\int_{R_{k_{1}}}
\left(
\delta^{-1}
P_{k_{1}}(x)
\right)^{q_{1}}
dx_{1}.
\]
Hence, by the definition of \(\delta\),
\[
T_{k_{1}}^{1}\lesssim 1,
\]
 where the implicit constant is independent of $k_{1}$ and $\tilde x$.
\medskip

\textit{Estimate of \(T_{k_{1}}^{2}\).}
Using a dyadic decomposition of the level set \(\{P_{k_{1}}>\delta\}\), we write
\[
T_{k_{1}}^{2}
=
\sum_{N=0}^{\infty}
\int_{R_{k_{1}}
	\cap
	\{2^{N/q_{1}}
	<
	\delta^{-1}P_{k_{1}}
	\le
	2^{(N+1)/q_{1}}\}}
\big(\cdots\,\big)^{p_{1}}dx_{1}.
\]

For a fixed \(N\), decompose
\[
\sum_{v=3-k_{1}}^{\infty}
2^{v(\frac 1{p_{1}}-\frac 1{q_{1}})
	+v(s_{2}^{2}+\frac n2)
	+k_{1}\alpha_{1}^{2}}
\sum_{m\in\mathbb Z^{n}}
|\lambda_{v,m}|\chi_{v,m}
=
J_{1,k_{1}}+J_{2,k_{1}},
\]
where
\[
J_{1,k_{1}}
=
\sum_{v=3-k_{1}}^{N}
2^{v(\frac 1{p_{1}}-\frac 1{q_{1}})
	+v(s_{2}^{2}+\frac n2)
	+k_{1}\alpha_{1}^{2}}
\sum_{m\in\mathbb Z^{n}}
|\lambda_{v,m}|\chi_{v,m},
\]
and
\[
J_{2,k_{1}}
=
\sum_{v=N+1}^{\infty}
2^{v(\frac 1{p_{1}}-\frac 1{q_{1}})
	+v(s_{2}^{2}+\frac n2)
	+k_{1}\alpha_{1}^{2}}
\sum_{m\in\mathbb Z^{n}}
|\lambda_{v,m}|\chi_{v,m}.
\]
Here we put $J_{1,k_{1}}=0$ if $k_{1}<3-N$. Let \(x_{1}\in R_{k_{1}}\cap Q_{v,m_{1}}^{1}\)  and
\(y_{1}\in Q_{v,m_{1}}^{1}\) with $3-k_{1}\leq v\leq N$.
Since
\[
|x_{1}-y_{1}|\le 2^{1-v},
\]
it follows that
\[
2^{k_{1}-2}
<
|y_{1}|
<
2^{1-v}+2^{k_{1}}
<
2^{k_{1}+2},
\]
and therefore \(y_{1}\in\widetilde R_{k_{1}}\). Consequently,
\[
|\lambda_{v,m}|^{q_{1}}
=
2^{v}
\int_{\mathbb R}
|\lambda_{v,m}|^{q_{1}}
\chi_{v,m_{1}}(y_{1})\,dy_{1}
\le
2^{v}
\int_{\widetilde R_{k_{1}}}
|\lambda_{v,m}|^{q_{1}}
\chi_{v,m_{1}}(y_{1})\,dy_{1}.
\]

Hence,
\begin{align*}
&\sum_{\widetilde m\in\mathbb Z^{n-1}}
\chi_{v,\widetilde m}
\sum_{m_{1}\in\mathbb Z}
|\lambda_{v,m}|^{q_{1}}
\chi_{v,m_{1}}
\\
&\le
2^{v}
\sum_{\widetilde m\in\mathbb Z^{n-1}}
\chi_{v,\widetilde m}
\int_{\widetilde R_{k_{1}}}
\sum_{m_{1}\in\mathbb Z}
|\lambda_{v,m}|^{q_{1}}
\chi_{v,m_{1}}(y_{1})\,dy_{1}
\\
&=
2^{v}
\sum_{\widetilde m\in\mathbb Z^{n-1}}
\Bigl\|
\sum_{m_{1}\in\mathbb Z}
\lambda_{v,m}
\chi_{v,m_{1}}
\chi_{\widetilde R_{k_{1}}}
\Bigr\|_{L^{q_{1}}(\mathbb R)}^{q_{1}}
\chi_{v,\widetilde m}
\\
&\le
2^{(\frac1{q_{1}}-s_{2}^{2}-\frac n2)q_{1}v
	-\alpha_{1}^{2}q_{1}k_{1}}
\delta^{q_{1}}.
\end{align*}

This leads to
\[
\delta^{-1}J_{1,k_{1}}
\lesssim
\sum_{v=3-k_{1}}^{N}
2^{v/p_{1}}
\lesssim
2^{N/p_{1}},
\]
where the implicit constant is independent of $\tilde x$.

For \(J_{2,k_{1}}\), we use the definition of \(P_{k_{1}}\):
\begin{align*}
\delta^{-1}J_{2,k_{1}}
&\lesssim
\sum_{v=N+1}^{\infty}
2^{v(\frac1{p_{1}}-\frac1{q_{1}})}
\bigl(
\delta^{-1}
P_{k_{1}}
\bigr)
\\
&\lesssim
\sum_{v=N+1}^{\infty}
2^{(N-v)/q_{1}}\,2^{v/p_{1}}
\lesssim
2^{N/p_{1}},
\end{align*}
whenever \(	2^{N/q_{1}}<\delta^{-1}P_{k_{1}}
\le2^{(N+1)/q_{1}}\)  and we have used  \(q_{1}<p_{1}\).

Therefore,
\[
\delta^{-1}(J_{1,k_{1}}+J_{2,k_{1}})
\lesssim
2^{N/p_{1}},
\]
whenever \(	2^{N/q_{1}}<\delta^{-1}P_{k_{1}}
\le2^{(N+1)/q_{1}}\). Combining the above estimates yields
\begin{align*}
T^{2}_{k_{1}}
&\lesssim
\sum_{N=0}^{\infty}
\int_{R_{k_{1}}
	\cap
	\{2^{N/q_{1}}
	<
	\delta^{-1}P_{k_{1}}
	\le
	2^{(N+1)/q_{1}}\}}
2^{N}\,dx_{1}
\\
&\lesssim
\int_{R_{k_{1}}}
\bigl(
\delta^{-1}
P_{k_{1}}(x)
\bigr)^{q_{1}}
dx_{1}
\\
&\lesssim 1,
\end{align*}
where the implicit constant is independent of $\tilde x$.

\textit{Estimate of \eqref{est3-bis}}. The arguments here are quite similar
to those used in the estimation of $I_{2}$.

\textit{Substep 2.2}. We shall prove that
\[
\|V\|_{\dot K_{p_2}^{\alpha_2^{1},r_2}}
\]
is bounded by
\begin{equation}
c
\sum_{v=0}^{\infty}
2^{(s_{2}^{3}+\frac n2)v}
\sum_{\bar m\in\mathbb Z^{\,n-2}}
\Bigl\|
\Bigl\|
\sum_{\check m\in\mathbb Z^{2}}
\lambda_{v,m}\chi_{v,\check m}
\Bigr\|_{\dot K_{q_1}^{\alpha_1^{2},r_1}}
\Bigr\|_{\dot K_{q_2}^{\alpha_2^{2},r_2}}
\chi_{v,\bar m},
\label{desired-est}
\end{equation}
where
\[
m=(m_1,\ldots,m_n),\qquad
\check m=(m_1,m_2),\qquad
\bar m=(m_3,\ldots,m_n).
\]

Define
\[
\widetilde{\lambda}_{v,\widetilde m}
=
\Bigl\|
\sum_{m_1\in\mathbb Z}
\lambda_{v,m}\chi_{v,m_1}
\Bigr\|_{\dot K_{q_1}^{\alpha_1^{2},r_1}},
\qquad
\widetilde m=(m_2,\ldots,m_n)\in\mathbb Z^{\,n-1}.
\]
Then
\[
\|V\|_{\dot K_{p_2}^{\alpha_2^{1},r_2}}
=
\Biggl(
\sum_{k_2\in\mathbb Z}
2^{k_2\alpha_2^{1}r_2}
\Bigl\|
\sum_{v=0}^{\infty}
2^{(s_{2}^{2}+\frac n2)v}
\sum_{\widetilde m\in\mathbb Z^{\,n-1}}
\widetilde{\lambda}_{v,\widetilde m}
\chi_{v,\widetilde m}\chi_{k_2}
\Bigr\|_{L^{p_2}(\mathbb R)}^{r_2}
\Biggr)^{1/r_2}.
\]

Splitting the outer sum at \(k_2=2\), we obtain
\[
\|V\|_{\dot K_{p_2}^{\alpha_2^{1},r_2}}
\lesssim I_3+I_4,
\]
where
\begin{align*}
I_3
&=
\Biggl(
\sum_{k_2=-\infty}^{2}
2^{k_2\alpha_2^{1}r_2}
\Bigl\|
\sum_{v=0}^{\infty}
2^{(s_{2}^{2}+\frac n2)v}
\sum_{\widetilde m\in\mathbb Z^{\,n-1}}
\widetilde{\lambda}_{v,\widetilde m}
\chi_{v,\widetilde m}\chi_{k_2}
\Bigr\|_{L^{p_2}(\mathbb R)}^{r_2}
\Biggr)^{1/r_2},\\
I_4
&=
\Biggl(
\sum_{k_2=3}^{\infty}
2^{k_2\alpha_2^{1}r_2}
\Bigl\|
\sum_{v=0}^{\infty}
2^{(s_{2}^{2}+\frac n2)v}
\sum_{\widetilde m\in\mathbb Z^{\,n-1}}
\widetilde{\lambda}_{v,\widetilde m}
\chi_{v,\widetilde m}\chi_{k_2}
\Bigr\|_{L^{p_2}(\mathbb R)}^{r_2}
\Biggr)^{1/r_2}.
\end{align*}

Since
\[
s_{2}^{2}
=
s_{2}^{3}
+\frac1{p_2}
-\frac1{q_2}
-\alpha_2^{2}
+\alpha_2^{1},
\]
the estimates for \(I_3\) and \(I_4\) follow exactly as in Substep~2.1.
Hence, \eqref{desired-est} holds.

Proceeding inductively, one obtains the assertion of Step~1 for every
\(j\in\{2,\ldots,n-1\}\). This completes the proof of Step~1.
\end{proof}

From Theorems \ref{phi-tran} and \ref{embeddings-sobolev}, we have the
following Sobolev embedding for\ spaces\ $\dot{E}_{\vec{p}}^{\mathbf{\vec{%
\alpha}},\vec{q}}F_{\beta }^{s}$.

\begin{theorem}
\label{embeddings3.1}Let $s_{1},s_{2}\in \mathbb{R},0<\vec{q}<\vec{p}<\infty
,0<\vec{r}\leq \infty ,\vec{\alpha}_{1}=(\alpha _{1}^{1},...,\alpha
_{n}^{1})\in \mathbb{R}^{n},\vec{\alpha}_{2}=(\alpha _{1}^{2},...,\alpha
_{n}^{2})\in \mathbb{R}^{n}$ and $0<\theta ,\beta \leq \infty $. \textit{We
suppose that}%
\begin{equation*}
\alpha _{i}^{2}\geq \alpha _{i}^{1}>-\frac{1}{p_{i}},\quad i\in \{1,...,n\}
\end{equation*}%
\textit{and }%
\begin{equation}
s_{1}-\frac{1}{\mathbf{p}}-\boldsymbol{\alpha}_{1}\leq s_{2}-\frac{1}{\mathbf{q}%
}-\boldsymbol{\alpha}_{2}.  \label{cond-sobolev}
\end{equation}%
Then%
\begin{equation*}
\dot{E}_{\vec{q}}^{\vec{\alpha}_{2},\vec{r}}F_{\theta
}^{s_{2}}\hookrightarrow \dot{E}_{\vec{p}}^{\vec{\alpha}_{1},\vec{r}%
}F_{\beta }^{s_{1}}.
\end{equation*}
\end{theorem}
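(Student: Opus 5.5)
The plan is to reduce the statement to the sequence-space embedding of Theorem \ref{embeddings-sobolev} via the $\varphi$-transform of Theorem \ref{phi-tran}, after first disposing of the inequality in \eqref{cond-sobolev}.

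\emph{Reduction to equality.} Define $s_0\geq s_1$ by
\[
s_0-\frac1{\mathbf p}-\boldsymbol{\alpha}_1=s_2-\frac1{\mathbf q}-\boldsymbol{\alpha}_2 .
\]
If $s_0=s_1$ there is nothing to do. If $s_0>s_1$, Theorem \ref{embeddings1.1}(ii), applied with $\varepsilon=s_0-s_1$ and arbitrary fine indices, gives
\[
\dot{E}_{\vec p}^{\vec\alpha_1,\vec r}F_{\beta}^{s_0}\hookrightarrow \dot{E}_{\vec p}^{\vec\alpha_1,\vec r}F_{\beta}^{s_1}.
\]
(For $r_i=\infty$ this needs Theorem \ref{embeddings1.1}, which was stated for finite $\vec q$; its proof is a pointwise $\ell^\beta$ argument, so it works verbatim.) It therefore suffices to prove
\[
\dot{E}_{\vec q}^{\vec\alpha_2,\vec r}F_{\theta}^{s_2}\hookrightarrow \dot{E}_{\vec p}^{\vec\alpha_1,\vec r}F_{\beta}^{s_0}
\]
under the equality \eqref{newexp1.1} with $s_1$ replaced by $s_0$.

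\emph{Transfer to sequences.} Fix $\Phi,\Psi,\varphi,\psi$ satisfying \eqref{Ass1}--\eqref{Ass3}, and let $f\in \dot{E}_{\vec q}^{\vec\alpha_2,\vec r}F_{\theta}^{s_2}$. The hypothesis $\alpha_i^2\geq\alpha_i^1>-1/p_i>-1/q_i$ (using $q_i<p_i$) shows that the restriction $\alpha_i^2>-1/q_i$ required in Theorem \ref{phi-tran} holds for the source space, and $\alpha_i^1>-1/p_i$ is exactly what is needed for the target space. By Theorem \ref{phi-tran},
\[
\|S_\varphi f\|_{\dot{E}_{\vec q}^{\vec\alpha_2,\vec r}f_{\theta}^{s_2}}\lesssim \|f\|_{\dot{E}_{\vec q}^{\vec\alpha_2,\vec r}F_{\theta}^{s_2}} .
\]
Theorem \ref{embeddings-sobolev} then places $S_\varphi f$ in $\dot{E}_{\vec p}^{\vec\alpha_1,\vec r}f_{\beta}^{s_0}$ with the corresponding norm bound. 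Since $f=T_\psi S_\varphi f$ in $\mathcal S'$, and $T_\psi$ is bounded from $\dot{E}_{\vec p}^{\vec\alpha_1,\vec r}f_{\beta}^{s_0}$ into $\dot{E}_{\vec p}^{\vec\alpha_1,\vec r}F_{\beta}^{s_0}$ by Theorem \ref{phi-tran} again, we obtain
\[
\|f\|_{\dot{E}_{\vec p}^{\vec\alpha_1,\vec r}F_{\beta}^{s_0}}\lesssim \|S_\varphi f\|_{\dot{E}_{\vec p}^{\vec\alpha_1,\vec r}f_{\beta}^{s_0}}\lesssim \|f\|_{\dot{E}_{\vec q}^{\vec\alpha_2,\vec r}F_{\theta}^{s_2}} .
\]

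\emph{Main obstacle.} The argument is a composition of earlier results, so the only delicate point is bookkeeping of admissible parameters. Theorem \ref{phi-tran} is stated for $0<\vec q<\infty$ in the Herz index, whereas here $\vec r$ may have infinite entries. I would handle this by noting that the proofs of Lemma \ref{lamda-equi} and Theorem \ref{phi-tran} rest on Lemma \ref{Maximal-Inq}, whose restriction on $t$ involves $q_-$; with some $r_i=\infty$ one simply drops $r_i$ from that minimum (the $\ell^\infty$ case of the Herz sum is easier). Alternatively, one can use \eqref{embed3}-type monotonicity to reduce to finite $\vec r$ where possible.

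I would also verify that the identity $T_\psi\circ S_\varphi=I$ holds on the source space, so that the tempered distribution produced in the target is $f$ itself. This follows from Lemma \ref{DW-lemma1} together with the convergence in $\mathcal S'$ guaranteed by Lemma \ref{Inv-phi-trans}.
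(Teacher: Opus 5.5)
Your proposal is correct and follows essentially the paper's route: the paper obtains Theorem~\ref{embeddings3.1} in one line by combining the $\varphi$-transform characterization (Theorem~\ref{phi-tran}) with the sequence-space embedding (Theorem~\ref{embeddings-sobolev}). Your reduction of \eqref{cond-sobolev} to the equality case via monotonicity in $s$, and your check $\alpha_i^2\ge\alpha_i^1>-\frac1{p_i}>-\frac1{q_i}$ for the source space, make explicit what the paper leaves implicit. The paper also does not address entries $r_i=\infty$, for which Theorem~\ref{phi-tran} is not formally stated. Your fallback via \eqref{embed3} cannot help there, since the same $\vec r$ appears in both the source and the target, so extending the maximal-inequality machinery to $r_i=\infty$, as you first suggest, is the right fix.
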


\begin{remark}
 We prove the necessity of \eqref{cond-sobolev}. Let
	$\omega\in\mathcal{S}(\mathbb{R}^n)$ satisfy
	\[
	\operatorname{supp}\mathcal{F}{\omega}
	\subset
	\left\{\xi\in\mathbb{R}^n:\frac34<|\xi|<1\right\}.
	\]
	For $N\in\mathbb N$, define
	\[
	f_N(x)=\omega(2^N x), \qquad x\in\mathbb{R}^n.
	\]
	Clearly,
	\[
	\omega\in
	\dot{E}_{\vec{q}}^{\vec{\alpha}_2,\vec{\theta}}(\mathbb{R}^n)
	\cap
	\dot{E}_{\vec{p}}^{\vec{\alpha}_1,\vec{r}}(\mathbb{R}^n).
	\]
	By the support properties of $\omega$, we have
	\[
	\mathcal{F}^{-1}\varphi_j * f_N=
	\begin{cases}
	f_N, & j=N,\\
	0, & j\neq N.
	\end{cases}
	\]
	
	Let $i\in\{1,\ldots,n\}$. Then
	\begin{align*}
	\|f_N\|_{\dot K_{p_i}^{\alpha_i^1,r_i}}
	&=
	\left(
	\sum_{k_i=-\infty}^{\infty}
	2^{k_i\alpha_i^1 r_i}
	\|f_N\chi_{k_i}\|_{L^{{p_{i}}}(\mathbb R)}^{r_i}
	\right)^{1/r_i}
	\\
	&=
	2^{-N/p_i}
	\left(
	\sum_{k_i=-\infty}^{\infty}
	2^{k_i\alpha_i^1 r_i}
	\|\widetilde{\omega}(2^N\cdot)\chi_{k_i+N}\|_{L^{{p_{i}}}(\mathbb R)}^{r_i}
	\right)^{1/r_i}
	\\
	&=
	2^{-(\alpha_i^1+\frac1{p_i})N}
	\|\widetilde{\omega}(2^N\cdot)\|_{\dot K_{p_i}^{\alpha_i^1,r_i}},
	\end{align*}
	where
	\[
	\tilde{\omega}(2^{N}x)=\omega
	(2^{N}x_{1}...,2^{N}x_{i-1},x_{i},2^{N}x_{i+1},...,2^{N}x_{n}).
	\]
	
	Consequently,
	\[
	\|f_N\|_{\dot{E}_{\vec{p}}^{\vec{\alpha}_1,\vec r}F_\beta^{s_1}}
	=
	2^{s_1N}
	\|f_N\|_{\dot{E}_{\vec{p}}^{\vec{\alpha}_1,\vec r}(\mathbb R^n)}
	=
	2^{\left(
		s_1-\boldsymbol{\alpha}_1-\frac1{\mathbf p}
		\right)N}
	\|\omega\|_
	{\dot{E}_{\vec{p}}^{\vec{\alpha}_1,\vec r}(\mathbb R^n)}.
	\]
	
	Similarly,
	\[
	\|f_N\|_{\dot{E}_{\vec{q}}^{\vec{\alpha}_2,\vec{\theta}}F_\beta^{s_2}}
	=
	2^{\left(
		s_2-\boldsymbol{\alpha}_2-\frac1{\mathbf q}
		\right)N}
	\|\omega\|_
	{\dot{E}_{\vec{q}}^{\vec{\alpha}_2,\vec{\theta}}(\mathbb R^n)}.
	\]
	
	Assume that the embedding \eqref{Sobolev-emb1.1} holds. Then, for every
	$N\in\mathbb N$,
	\[
	\|f_N\|_{\dot{E}_{\vec{p}}^{\vec{\alpha}_1,\vec r}F_\beta^{s_1}}
	\le
	C\,
	\|f_N\|_{\dot{E}_{\vec{q}}^{\vec{\alpha}_2,\vec{\theta}}F_\beta^{s_2}},
	\]
	which yields
	\[
	2^{\left(
		s_1-s_2
		-\boldsymbol{\alpha}_1+\boldsymbol{\alpha}_2
		-\frac1{\mathbf p}
		+\frac1{\mathbf q}
		\right)N}
	\le C.
	\]
	Letting $N\to\infty$, we conclude that
	\[
	s_1-\boldsymbol{\alpha}_1-\frac1{\mathbf p}
	\le
	s_2-\boldsymbol{\alpha}_2-\frac1{\mathbf q},
	\]
	which is precisely condition \eqref{cond-sobolev}. 
	
	\end{remark}

From Theorem\ \ref{embeddings3.1}\ and the fact that $\dot{E}_{\vec{p}}^{\vec{0},%
\vec{p}}F_{\beta }^{s_{1}}=F_{\vec{p},\beta }^{s_{1}}$\ we immediately
arrive at the following results.

\begin{theorem}
\label{embeddings4.1}Let $s_{1},s_{2}\in \mathbb{R},\vec{\alpha}=(\alpha
_{1},...,\alpha _{n})\in \lbrack 0,\infty \mathbb{)}^{n},0<
\vec{r}\leq\vec{p}<\infty,0<\vec{q}
<\vec{p}<\infty ,0<\beta \leq \infty $\ and%
\begin{equation*}
s_{1}-\frac{1}{\mathbf{p}}\leq s_{2}-\frac{1}{\mathbf{q}}-\boldsymbol{\alpha}.
\end{equation*}%
Then%
\begin{equation*}
\dot{E}_{q}^{\vec{\alpha},\vec{r}}F_{\theta }^{s_{2}}\hookrightarrow F_{\vec{%
p},\beta }^{s_{1}}.
\end{equation*}
\end{theorem}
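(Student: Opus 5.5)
We will obtain Theorem \ref{embeddings4.1} as a special case of Theorem \ref{embeddings3.1}, followed by one further embedding that changes the Herz parameter $\vec r$ into $\vec p$. The target space $F_{\vec p,\beta}^{s_1}$ coincides with $\dot{E}_{\vec p}^{\vec 0,\vec p}F_{\beta}^{s_1}$. We therefore apply Theorem \ref{embeddings3.1} with $\vec\alpha_2=\vec\alpha$ and $\vec\alpha_1=\vec 0$. The hypothesis $\alpha_i^2\ge\alpha_i^1>-\frac1{p_i}$ becomes $\alpha_i\ge 0>-\frac1{p_i}$, which holds because $\vec\alpha\in[0,\infty)^n$. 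Since $\boldsymbol{\alpha}_1=0$ and $\boldsymbol{\alpha}_2=\boldsymbol{\alpha}$, condition \eqref{cond-sobolev} reads
\[
s_1-\frac1{\mathbf p}\le s_2-\frac1{\mathbf q}-\boldsymbol{\alpha},
\]
which is exactly the assumed inequality. Together with $0<\vec q<\vec p<\infty$, Theorem \ref{embeddings3.1} gives
\[
\dot{E}_{\vec q}^{\vec\alpha,\vec r}F_{\theta}^{s_2}\hookrightarrow \dot{E}_{\vec p}^{\vec 0,\vec r}F_{\beta}^{s_1}.
\]
Here the source space in the statement, written $\dot{E}_{q}^{\vec\alpha,\vec r}$, is read as $\dot{E}_{\vec q}^{\vec\alpha,\vec r}$.

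It then remains to pass from the Herz parameter $\vec r$ to $\vec p$ in the target, using $\vec r\le\vec p$. We invoke Theorem \ref{embeddings1.1}(iii), the monotonicity of the mixed-norm Herz spaces in the $\vec q$-index, with $\vec q_1=\vec r$ and $\vec q_2=\vec p$. This yields
\[
\dot{E}_{\vec p}^{\vec 0,\vec r}F_{\beta}^{s_1}\hookrightarrow \dot{E}_{\vec p}^{\vec 0,\vec p}F_{\beta}^{s_1}=F_{\vec p,\beta}^{s_1}.
\]
The underlying fact is simply $\ell^{r_i}\hookrightarrow\ell^{p_i}$, applied in each variable to the annular sums in the iterated $\dot K_{p_i}^{0,\cdot}$ quasi-norms. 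Composing the two embeddings gives the claim.

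The only point needing care is the range of $\vec r$. Theorem \ref{embeddings1.1}(iii) is stated for finite indices, while Theorem \ref{embeddings3.1} allows $\vec r\le\infty$; here this causes no difficulty, since the hypothesis $\vec r\le\vec p<\infty$ forces $\vec r$ to be finite. One should also check that the value $\vec\alpha_1=\vec 0$ is admissible in all the cited results. It is, because every one of them requires only $\alpha_i>-\frac1{p_i}$. The main (mild) obstacle is therefore bookkeeping of which index plays which role, rather than any new estimate.
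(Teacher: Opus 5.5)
Your proposal is correct and matches the paper's argument. The paper obtains this theorem directly from Theorem \ref{embeddings3.1} (with $\vec\alpha_1=\vec 0$, $\vec\alpha_2=\vec\alpha$) together with the identity $\dot{E}_{\vec p}^{\vec 0,\vec p}F_{\beta}^{s_1}=F_{\vec p,\beta}^{s_1}$; your use of Theorem \ref{embeddings1.1}(iii) with $\vec r\le\vec p$ to replace $\vec r$ by $\vec p$ in the target spells out the step the paper leaves implicit.
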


\begin{theorem}
\label{embeddings5.1}Let $s_{1},s_{2}\in \mathbb{R},0<\vec{q}\leq \vec{r}%
<\infty ,0<\vec{q}<\vec{p}<\infty $ and $0<\beta \leq \infty $. Assume that $%
\vec{\alpha}=(\alpha _{1},...,\alpha _{n})$\ with $\alpha _{i}>-\frac{1}{%
p_{i}},\ i\in \{1,...,n\}$ and 
\begin{equation*}
s_{1}-\frac{1}{\mathbf{p}}-\boldsymbol{\alpha}\leq s_{2}-\frac{1}{\mathbf{q}}.
\end{equation*}%
Then%
\begin{equation*}
F_{\vec{q},\theta }^{s_{2}}\hookrightarrow \dot{E}_{\vec{p}}^{\vec{\alpha},%
\vec{r}}F_{\beta }^{s_{1}}.
\end{equation*}
\end{theorem}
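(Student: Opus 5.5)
The plan is to deduce the statement from Theorem \ref{embeddings3.1} followed by an elementary Herz-space embedding. The idea is to identify $F_{\vec q,\theta}^{s_2}$ with $\dot{E}_{\vec q}^{\vec 0,\vec q}F_{\theta}^{s_2}$, move up the third index by Theorem \ref{embeddings1.1}(iii), and then apply the Sobolev embedding with $\vec\alpha_2=\vec 0$ on the source side.

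Concretely, I would proceed in three steps.

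First, since $\dot{E}_{\vec q}^{\vec 0,\vec q}F_{\theta}^{s_2}=F_{\vec q,\theta}^{s_2}$ and $q_i\le r_i$ for every $i$, Theorem \ref{embeddings1.1}(iii) gives
\[
F_{\vec q,\theta}^{s_2}=\dot{E}_{\vec q}^{\vec 0,\vec q}F_{\theta}^{s_2}\hookrightarrow \dot{E}_{\vec q}^{\vec 0,\vec r}F_{\theta}^{s_2}.
\]
The hypothesis $\alpha_i>-1/q_i$ required there holds trivially, since the weight is $0$.

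Second, I would apply Theorem \ref{embeddings3.1} with $\vec\alpha_2=\vec 0$, $\vec\alpha_1=\vec\alpha$, and the same $\vec r$ on both sides. The condition \eqref{cond-sobolev} becomes $s_1-\frac1{\mathbf p}-\boldsymbol{\alpha}\le s_2-\frac1{\mathbf q}$, which is exactly the assumption, and $\vec q<\vec p$ is given. This yields $\dot{E}_{\vec q}^{\vec 0,\vec r}F_{\theta}^{s_2}\hookrightarrow\dot{E}_{\vec p}^{\vec\alpha,\vec r}F_{\beta}^{s_1}$. Composing the two embeddings finishes the proof in the regime $\alpha_i\le 0$.

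The main obstacle is that Theorem \ref{embeddings3.1} also requires $\alpha_i^2\ge\alpha_i^1$, i.e.\ $\alpha_i\le0$, whereas here only $\alpha_i>-1/p_i$ is assumed. When some $\alpha_i>0$, I would first try to enlarge the source weight instead of setting it to zero. The pivot would be through an intermediate Herz space $\dot E_{\vec u}^{\vec\gamma,\vec r}$ with $q_i<u_i<p_i$ and $\gamma_i\ge\max(\alpha_i,0)$. One reaches it from $L^{\vec q}$ via Theorem \ref{embeddings1.1}(iv)-type Hölder arguments applied on the annuli. The catch is that Hölder raises the exponent only at the cost of lowering the weight, which goes the wrong way, so this route may not close.

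The alternative would be to redo directly the sequence-space argument of Theorem \ref{embeddings-sobolev} with $\vec\alpha_2=\vec0$ and the source norm being $L^{\vec q}$. In that argument $\alpha^2\ge\alpha^1$ is used only to bound $2^{(\alpha_1^1-\alpha_1^2)(v+k_1)}\le1$ on the large-$v$ pieces. For $\alpha_1^1>0$ one must instead absorb the extra factor $2^{\alpha_1^1(v+k_1)}$. Since $k_1\le 3$ there, $2^{\alpha_1^1 k_1}$ is harmless after summing in $k_1$ via Lemma \ref{lem:lq-inequality}. The factor $2^{\alpha_1^1 v}$ I would try to absorb using the strict slack $1/q_1-1/p_1>0$ in the level-set decomposition. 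I expect this last step to be the genuinely delicate point.
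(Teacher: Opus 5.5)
Your Steps 1 and 2 are exactly the paper's argument. The paper gives no separate proof: it derives Theorem~\ref{embeddings5.1} in one line from Theorem~\ref{embeddings3.1} and the identity $F_{\vec q,\theta}^{s_2}=\dot E_{\vec q}^{\vec 0,\vec q}F_{\theta}^{s_2}$, passing from $\vec q$ to $\vec r$ in the Herz index as you do, and then taking $\vec\alpha_2=\vec 0$, $\vec\alpha_1=\vec\alpha$. You are right that this needs $\alpha_i=\alpha_i^1\le\alpha_i^2=0$. The paper's statement omits this hypothesis, and its derivation uses it tacitly. For $-\frac1{p_i}<\alpha_i\le0$ your proof is complete and coincides with the paper's.

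The case you flag as the main obstacle cannot be closed. If $\alpha_{i_0}>0$ for some $i_0$, the embedding fails for every choice of $s_1,s_2$, so neither repair route can succeed. The obstruction is the translation invariance of $F_{\vec q,\theta}^{s_2}$.

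Take $\omega\in\mathcal S(\mathbb R^n)$ such that $g:=\Phi*\omega$ satisfies $g(0)\neq0$. Put $f_K=\omega(\cdot-t_Ke_{i_0})$ with $t_K=3\cdot2^{K-2}$ and $K\ge3$. Then $\|f_K\|_{F_{\vec q,\theta}^{s_2}}=\|\omega\|_{F_{\vec q,\theta}^{s_2}}$ for all $K$. On the other hand, three facts hold:
\begin{enumerate}
\item $[t_K-1,t_K+1)\subset R_K$ in the $i_0$-th variable;
\item translation in $x_{i_0}$ commutes with the inner norms in $x_1,\ldots,x_{i_0-1}$;
\item all the quasi-norms involved are monotone.
\end{enumerate}
Together they give
\[
\|f_K\|_{\dot E_{\vec p}^{\vec\alpha,\vec r}F_{\beta}^{s_1}}
\ge
\|g(\cdot-t_Ke_{i_0})\|_{\dot E_{\vec p}^{\vec\alpha,\vec r}(\mathbb R^n)}
\ge
c\,2^{K\alpha_{i_0}}\longrightarrow\infty .
\]
Here $c$ is the iterated quasi-norm of $g\,\chi_{\{|x_{i_0}|<1\}}$ with the $i_0$-th Herz norm replaced by the $L^{p_{i_0}}$ norm. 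It does not depend on $K$, and it is positive because $g\neq0$ near the origin. This is the translation analogue of the dilation argument in the paper's necessity remark. It shows that $\alpha_i^1\le\alpha_i^2$ is necessary in Theorem~\ref{embeddings3.1}, and here $\alpha_i^2=0$.

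In particular, the premise of your second route is wrong on two counts.
\begin{itemize}
\item The factor $2^{(\alpha_1^1-\alpha_1^2)(v+k_1)}$ also appears in the far part \eqref{est3-bis}, where $k_1\ge3$ is unbounded. With $\alpha_1^2=0$ it contains $2^{\alpha_1^1k_1}$ already at $v=0$, which is exactly what the functions $f_K$ exploit.
\item Even for fixed $k_1$, the slack $\frac1{q_1}-\frac1{p_1}$ is not available to absorb $2^{\alpha_1^1v}$, because the Sobolev relation has already spent it. If $\alpha_1>0$ and equality holds in $s_1-\frac1{\mathbf p}-\boldsymbol{\alpha}\le s_2-\frac1{\mathbf q}$, the wave packets $\omega(2^v(\cdot-\frac32e_1))$, with $\mathcal F\omega$ supported in an annulus, have target-to-source ratio $\gtrsim2^{\alpha_1v}$.
\end{itemize}

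The correct conclusion is to add the hypothesis $\alpha_i\le0$, under which your two-step argument is a full proof.
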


Using our results, we have the following useful consequences.

\begin{corollary}
Let $s_{1},s_{2},s_{3}\in \mathbb{R},0<\vec{t}<\vec{q}<\vec{p}<\infty
,0<\beta \leq \infty $\ and%
\begin{equation*}
s_{1}-\frac{1}{\mathbf{p}}=s_{2}-\frac{1}{\mathbf{q}}=s_{3}-\frac{1}{\mathbf{%
t}}.
\end{equation*}%
Then%
\begin{equation*}
F_{\vec{t},\infty }^{s_{3}}\hookrightarrow \dot{E}_{\vec{q}}^{\vec{0},\vec{p}%
}F_{\infty }^{s_{2}}\hookrightarrow F_{\vec{p},\beta }^{s_{1}}.
\end{equation*}
\end{corollary}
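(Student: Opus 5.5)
The corollary is a chain of two embeddings, and I would obtain each link from Theorems \ref{embeddings5.1} and \ref{embeddings4.1}. No new estimate is needed. The only work is to check that the hypotheses of those theorems hold with the parameters identified suitably.

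\emph{First embedding.} I apply Theorem \ref{embeddings5.1} with the roles
\[
(\vec q,\vec p,\vec r,\vec\alpha,s_2,s_1,\theta,\beta)\ \leadsto\ (\vec t,\vec q,\vec p,\vec 0,s_3,s_2,\infty,\infty).
\]
The hypotheses then read as follows:
\begin{itemize}
\item $0<\vec t\le \vec p<\infty$, which holds since $\vec t<\vec q<\vec p$;
\item $0<\vec t<\vec q<\infty$, which is assumed;
\item $\alpha_i=0>-\frac1{q_i}$, which is trivially true;
\item $s_2-\frac1{\mathbf q}-0\le s_3-\frac1{\mathbf t}$, which holds with equality by assumption.
\end{itemize}
Hence $F_{\vec t,\infty}^{s_3}\hookrightarrow \dot E_{\vec q}^{\vec 0,\vec p}F_{\infty}^{s_2}$. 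Note that in Theorem \ref{embeddings5.1} the third index of the Herz space is $\vec r$, and here $\vec r=\vec p$. This is consistent with the middle space $\dot E_{\vec q}^{\vec 0,\vec p}$.

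\emph{Second embedding.} I apply Theorem \ref{embeddings4.1} with
\[
(\vec\alpha,\vec r,\vec q,\vec p,s_2,s_1,\theta)=(\vec 0,\vec p,\vec q,\vec p,s_2,s_1,\infty).
\]
The conditions are checked as follows:
\begin{itemize}
\item $\vec\alpha=\vec 0\in[0,\infty)^n$;
\item $\vec r=\vec p\le\vec p$;
\item $\vec q<\vec p$;
\item $s_1-\frac1{\mathbf p}\le s_2-\frac1{\mathbf q}-0$, which holds with equality.
\end{itemize}
This gives $\dot E_{\vec q}^{\vec 0,\vec p}F_\infty^{s_2}\hookrightarrow F_{\vec p,\beta}^{s_1}$ for every $0<\beta\le\infty$. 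If one prefers to go back to the source, this is Theorem \ref{embeddings3.1} with $\vec\alpha_2=\vec\alpha_1=\vec 0$ and $\vec r=\vec p$, combined with the identification $\dot E_{\vec p}^{\vec 0,\vec p}F_\beta^{s_1}=F_{\vec p,\beta}^{s_1}$.

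Composing the two continuous embeddings gives the corollary. The only step needing care is the bookkeeping of indices in the first link: one must check that Theorem \ref{embeddings5.1} permits the extreme choice $\vec r=\vec p$ and $\theta=\infty$. The theorem allows $\vec q\le\vec r$ and an arbitrary $\theta$, so this is fine. If one is instead uneasy about the finiteness requirement $\vec r<\infty$, the choice $\vec r=\vec p<\infty$ satisfies it.
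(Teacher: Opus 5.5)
Your proof is correct. Your second link is exactly the paper's: Theorem~\ref{embeddings4.1} with $\vec r=\vec p$, $\vec\alpha=\vec 0$, $\theta=\infty$.

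The first link is where you differ. The paper does not use Theorem~\ref{embeddings5.1} there; it writes
\[
F_{\vec t,\infty}^{s_3}\hookrightarrow F_{\vec q,\infty}^{s_2}=\dot E_{\vec q}^{\vec 0,\vec q}F_\infty^{s_2}\hookrightarrow \dot E_{\vec q}^{\vec 0,\vec p}F_\infty^{s_2}.
\]
This combines three ingredients:
\begin{itemize}
\item the classical Sobolev embedding for mixed-norm Triebel--Lizorkin spaces;
\item the identification $\dot E_{\vec q}^{\vec 0,\vec q}F_\infty^{s_2}=F_{\vec q,\infty}^{s_2}$;
\item monotonicity in the summation index, Theorem~\ref{embeddings1.1}(iii), since $q_i\le p_i$.
\end{itemize}
That route makes the first link independent of the new Herz--Sobolev machinery. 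It needs only a known result plus $\ell^{q_i}\hookrightarrow\ell^{p_i}$.

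Your route stays inside the paper and takes a single step. It rests on Theorem~\ref{embeddings5.1}, which comes from Theorem~\ref{embeddings3.1} with $\vec\alpha_2=\vec 0$ and summation index $\vec t$, after which that index is raised to $\vec p$. Unpacked, your link factors through $\dot E_{\vec q}^{\vec 0,\vec t}F_\infty^{s_2}$, which itself embeds into $F_{\vec q,\infty}^{s_2}$. So your route gives a slightly finer factorization than the paper's.

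It is also good that you use Theorem~\ref{embeddings5.1} only at $\vec\alpha=\vec 0$. That theorem is stated for all $\alpha_i>-\frac1{p_i}$, but Theorem~\ref{embeddings3.1} requires $\alpha_i^2\ge\alpha_i^1$. With $\vec\alpha_2=\vec 0$, the derivation therefore only justifies $\alpha_i\le 0$, and your choice lies in that range.
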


To prove this it is sufficient to choose in Theorem \ref{embeddings4.1}, $%
\vec{r}=\vec{p}$ and $\vec{\alpha}=\vec{0}$.\ However the desired embeddings are
an immediate consequence of the fact that 
\begin{equation*}
F_{\vec{t},\infty }^{s_{3}}\hookrightarrow F_{\vec{q},\infty }^{s_{2}}=\dot{E%
}_{\vec{q}}^{\vec{0},\vec{q}}F_{\infty }^{s_{2}}\hookrightarrow \dot{E}_{\vec{q}%
}^{\vec{0},\vec{p}}F_{\infty }^{s_{2}}.
\end{equation*}

\begin{corollary}
Let $s_{1},s_{2}\in \mathbb{R},0<\vec{q}<\vec{p}<\infty ,s_{1}-\frac{1}{%
\mathbf{p}}=s_{2}-\frac{1}{\mathbf{q}}$\ and $0<\beta \leq \infty $. Then%
\begin{equation*}
F_{\vec{q},\infty }^{s_{2}}\hookrightarrow \dot{E}_{\vec{p}}^{\vec{0},\vec{q}%
}F_{\beta }^{s_{1}}\hookrightarrow F_{\vec{p},\beta }^{s_{1}}.
\end{equation*}
\end{corollary}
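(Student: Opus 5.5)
The last corollary asserts $F_{\vec q,\infty}^{s_2}\hookrightarrow \dot E_{\vec p}^{\vec 0,\vec q}F_\beta^{s_1}\hookrightarrow F_{\vec p,\beta}^{s_1}$. I would prove the two arrows separately, each by a direct appeal to the Sobolev-type theorems above together with the identification $\dot E_{\vec p}^{\vec 0,\vec p}F_\beta^{s}=F_{\vec p,\beta}^{s}$.

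\emph{First embedding.} I would apply Theorem \ref{embeddings5.1} with $\vec\alpha=\vec 0$ and $\vec r=\vec q$. Its hypotheses are $0<\vec q\le\vec r<\infty$, which holds trivially; $0<\vec q<\vec p<\infty$, which is given; and $\alpha_i=0>-\frac1{p_i}$, which holds. The exponent condition reads
\[
s_1-\frac1{\mathbf p}-0\le s_2-\frac1{\mathbf q},
\]
and this holds with equality by assumption. Taking $\theta=\infty$ gives $F_{\vec q,\infty}^{s_2}\hookrightarrow \dot E_{\vec p}^{\vec 0,\vec q}F_\beta^{s_1}$.

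\emph{Second embedding.} Here $s$ and $\vec p$ are unchanged, and only the Herz outer exponent moves from $\vec q$ to $\vec p$. I would use Theorem \ref{embeddings1.1}(iii) with $\vec q_1=\vec q$ and $\vec q_2=\vec p$; this needs $q_i\le p_i$, which is given. It yields $\dot E_{\vec p}^{\vec 0,\vec q}F_\beta^{s_1}\hookrightarrow \dot E_{\vec p}^{\vec 0,\vec p}F_\beta^{s_1}$. The right-hand space equals $F_{\vec p,\beta}^{s_1}$ by the identification noted after Definition \ref{B-F-def}. The hypothesis $\alpha_i>-1/p_i$ required in Theorem \ref{embeddings1.1} is again trivial for $\vec\alpha=\vec 0$.

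\emph{Possible gap.} Theorem \ref{embeddings1.1}(iii) is stated for finite $\vec q_2$, and here $\vec q_2=\vec p<\infty$, so this is fine. The only point I expect to need care is that (iii) rests on the inclusion $\ell^{q_i}\hookrightarrow\ell^{p_i}$, applied coordinatewise inside the iterated Herz quasi-norm. Because the iterated norm is monotone in each layer, the inclusion can be applied successively from the innermost $\dot K_{p_1}$ to the outermost $\dot K_{p_n}$. Alternatively, the second arrow also follows from Theorem \ref{embeddings4.1} with $\vec\alpha=\vec 0$, $\vec r=\vec q\le\vec p$, the same $\vec p$ on both sides, and the trivial exponent condition, which avoids any such check.
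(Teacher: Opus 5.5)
Your argument is correct and is exactly the paper's proof: Theorem~\ref{embeddings5.1} with $\vec r=\vec q$, $\vec\alpha=\vec 0$ (and $\theta=\infty$) gives the first arrow, and the embedding in the outer Herz exponent, Theorem~\ref{embeddings1.1}(iii), together with $\dot{E}_{\vec p}^{\vec 0,\vec p}F_\beta^{s_1}=F_{\vec p,\beta}^{s_1}$ gives the second. One caveat: drop your ``alternative'' via Theorem~\ref{embeddings4.1}, because that theorem requires the strict inequality $\vec q<\vec p$ between the Lebesgue exponents of source and target, so it cannot be used with the same $\vec p$ on both sides; moreover, its own derivation from Theorem~\ref{embeddings3.1} relies on the same outer-exponent embedding you wanted to bypass, which is what its hypothesis $\vec r\le\vec p$ is for.
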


\begin{proof}
To prove this it is sufficient to choose in Theorem\ \ref{embeddings5.1}, $%
\vec{r}=\vec{q}$ and $\vec{\alpha}=\vec{0}$. Then the desired embeddings are an
immediate consequence of the fact that 
\begin{equation*}
F_{\vec{q},\infty }^{s_{2}}\hookrightarrow \dot{E}_{\vec{p}}^{\vec{0},\vec{q}%
}F_{\beta }^{s_{1}}\hookrightarrow \dot{E}_{\vec{p}}^{\vec{0},\vec{p}}F_{\beta
}^{s_{1}}=F_{\vec{p},\beta }^{s_{1}}.
\end{equation*}
\end{proof}

\begin{remark}
Our Sobolev embeddings obtained in this section extend and improve the
corresponding results\ of\ \cite{JS07}.
\end{remark}

\textbf{Data Availability Statement} Data sharing is not applicable to this article as no
data sets were generated or analyzed during this study.

\section*{Declarations}

\textbf{Funding and/or Conflicts of interests/Competing interests:} This work was supported by the General Direction of Higher Education and Training under Grant No. C00L03UN280120220004 and by the General Directorate of Scientific Research and Technological Development, Algeria. The author declares that there are no conflicts of interest or competing interests

\end{document}